\newcommand\hrefdefaultfont{\ttfamily}
\xpatchcmd\href{\setkeys{href}{#1}}{\setkeys{href}{font=\hrefdefaultfont,#1}}{}{\fail}
\renewcommand*{\backref}[1]{}
\renewcommand*{\backrefalt}[4]{
  \ifcase #1 
  [No citations.]
  \or [#2]
  \else [#2]
  \fi }
\let\originalleft\left
\let\originalright\right
\renewcommand{\left}{\mathopen{}\mathclose\bgroup\originalleft}
\renewcommand{\right}{\aftergroup\egroup\originalright}
\newcommand{\calL}{\mathcal{L}}
\newcommand{\calM}{\mathcal{M}}
\newcommand{\calN}{\mathcal{N}}
\newcommand{\calS}{\mathcal{S}}
\newcommand{\calT}{\mathcal{T}}
\newcommand{\calV}{\mathcal{V}}
\newcommand{\HH}{\mathbb{H}}
\newcommand{\NN}{\mathbb{N}}
\newcommand{\RR}{\mathbb{R}}
\newcommand{\ZZ}{\mathbb{Z}}
\newcommand{\st}{\mathbin{\mid}} 
\newcommand{\from}{\colon} 
\newcommand{\Id}{\operatorname{Id}}
\newcommand{\cross}{\times}
\newcommand*{\medcap}{\mathbin{\scalebox{1.5}{\ensuremath{\cap}}}}
\newcommand*{\medcup}{\mathbin{\scalebox{1.5}{\ensuremath{\cup}}}}
\newcommand{\cover}[1]{{\widetilde{#1}}}
\newcommand{\closure}[1]{{\overline{#1}}}
\newcommand{\bdy}{\partial} 
\newcommand{\Isom}{\operatorname{Isom}} 
\newcommand{\Aut}{\operatorname{Aut}} 
\newcommand{\SL}{\operatorname{SL}} 
\theoremstyle{plain}
\newtheorem{XXXtheoremQED}[equation]{Theorem} 
  {\pushQED{\qed}\begin{XXXtheoremQED}}
  {\popQED\end{XXXtheoremQED}}
\newcommand{\fakeenv}{} 
\newenvironment{restate}[2]  
{ 
 \renewcommand{\fakeenv}{#2} 
 \theoremstyle{plain} 
 \newtheorem*{\fakeenv}{#1~\ref{#2}} 
 \begin{\fakeenv}
}
{
 \end{\fakeenv}
}
\newenvironment{restated}[2]  
{ 
 \renewcommand{\fakeenv}{#2} 
 \theoremstyle{definition} 
 \newtheorem*{\fakeenv}{#1~\ref{#2}} 
 \begin{\fakeenv}
}
{
 \end{\fakeenv}
}
\definecolor{mygray}{gray}{0.6}
\newcommand{\Loom}{\operatorname{\mathsf{Loom}}}
\newcommand{\Taut}{\operatorname{\mathsf{Taut}}}
\newcommand{\Veer}{\operatorname{\mathsf{Veer}}}
\newcommand{\stair}{\operatorname{\mathsf{\Gamma}}} 
\newcommand{\veer}{\operatorname{\mathsf{V}}} 
\newcommand{\loom}{\operatorname{\mathsf{L}}} 
\newcommand{\hull}{\operatorname{\mathsf{H}}}
\newcommand{\disk}{\operatorname{\mathsf{D}}}
\newcommand{\sector}{\operatorname{\mathsf{S}}}
\newcommand{\tetra}{\operatorname{\mathsf{tet}}}
\newcommand{\cell}{\operatorname{\mathsf{c}}}
\newcommand{\trace}{\operatorname{trace}}
\newcommand{\dotDelta}{{\vphantom{\Delta}\mathpalette\d@tD@lta\relax}}
\newcommand{\d@tD@lta}[2]{%
  \ooalign{\hidewidth$\m@th#1\mkern-1mu\cdot$\hidewidth\cr$\m@th#1\Delta$\cr}%
}
\newcommand{\ExtDel}{\dotDelta}
\title{From loom spaces to veering triangulations}
\author[Schleimer]{Saul Schleimer}
\author[Segerman]{Henry Segerman} 
\date{\today}
\begin{document}

\begin{abstract}
We introduce \emph{loom spaces}, a generalisation of both the \emph{leaf spaces} associated to pseudo-Anosov flows and the \emph{link spaces} associated to veering triangulations.
Following work of Gu\'eritaud, we prove that there is a locally veering triangulation canonically associated to every loom space, and that the realisation of this triangulation is homeomorphic to $\RR^3$. 
\end{abstract}




\maketitle

\newlength{\savedintextsep}
\setlength{\savedintextsep}{\intextsep}

\setcounter{section}{1}
\setlength{\intextsep}{-3pt}
\begin{wrapfigure}[15]{r}{0.35\textwidth}
\vspace{8pt}
\scalebox{-1}[1]{\includegraphics[width=0.45\textwidth]{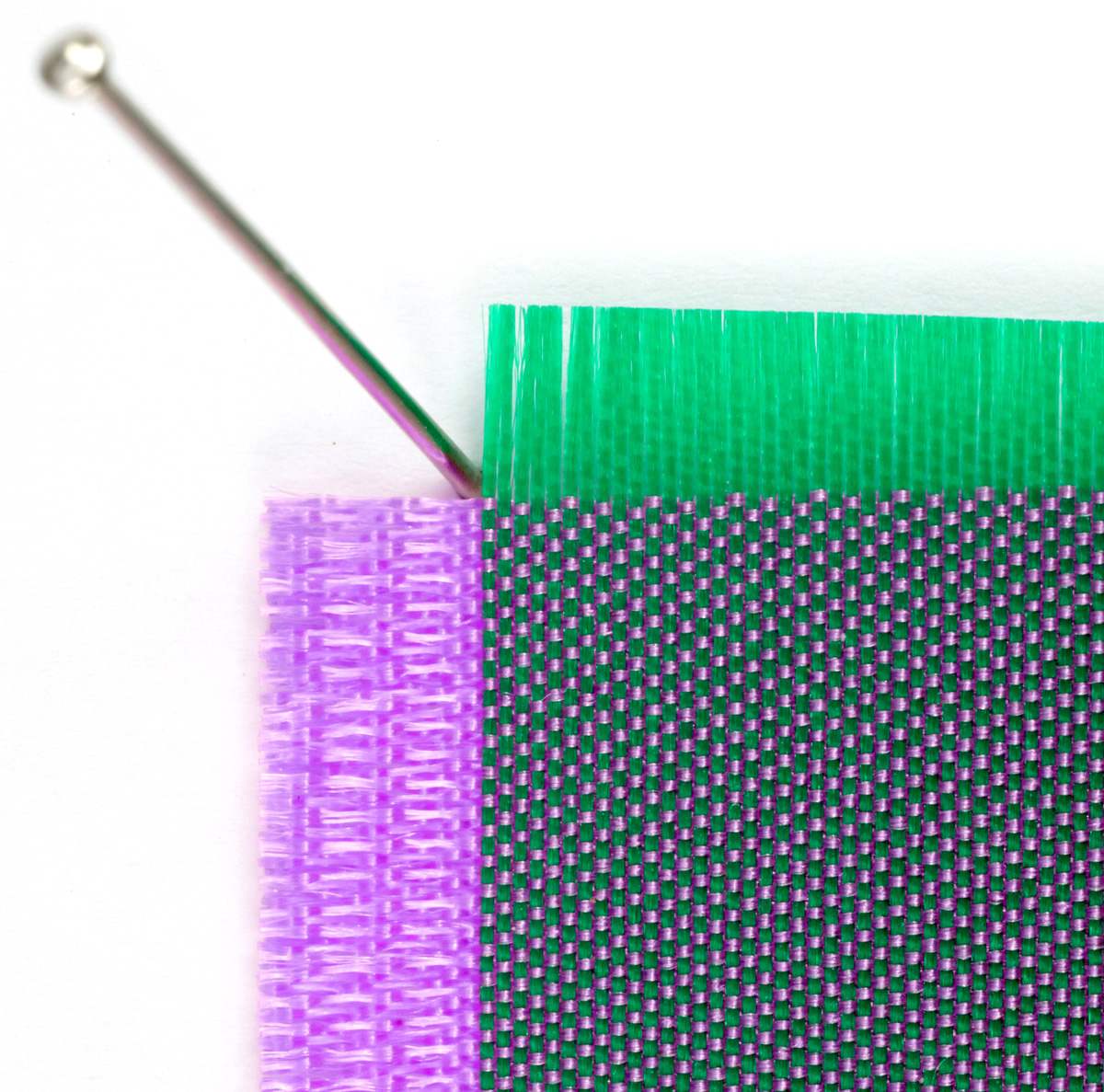}}
\caption{The green and purple threads give transverse foliations.}
\label{Fig:Fabric}
\end{wrapfigure}
\setcounter{section}{0}

\vspace{-15pt} 
\leavevmode
\section{Introduction}
\label{Sec:Intro}

\setlength{\intextsep}{\savedintextsep}

One-dimensional foliations, for example orbits of a flow, appeared early in the history of dynamical systems.
More delicate applications, such as foliations stable for, or transverse to, a flow, arrived in due course.
We refer to~\cite{HectorHirsch86} for a readable and well-illustrated introduction to this area.

Suppose that $S$ is a closed, connected, oriented surface.
Suppose that $f \from S \to S$ is a surface homeomorphism.
The \emph{mapping torus} for $f$ is the manifold $M(f)$ obtained from $S \cross [0, 1]$ by gluing, for every $x \in S$, the point $(x, 1)$ to the point $(f(x), 0)$.
Then $M(f)$ is equipped with a \emph{suspension flow} $\Phi(f)$ along the intervals; 
this flow has a transverse foliation given by the copies of $S$.
For an example in genus one, see \reffig{FigEightBox}.
Suspension flows associated to surface homeomorphisms are particularly important, for example due to the work of Thurston~\cite[Theorem~5.6]{Thurston82}.

When $f$ is (pseudo-)Anosov then we also have the \emph{stable} and \emph{unstable} (singular) foliations of $M(f)$ associated to $\Phi(f)$; 
the two-dimensional leaves of these foliations are obtained by taking suitable unions of flow-lines.
See Examples~\ref{Exa:AnosovMap} and~\ref{Exa:PseudoAnosovMap}; 
for more detail we refer to~\cite[Chapter~1]{Calegari07}. 
Unfortunately, the \emph{leaf space} of $\Phi(f)$ is highly non-Hausdorff.  
To obtain a somewhat calmer object, we define $\calL(f)$ to be the leaf space of the lift of $\Phi(f)$ to the universal cover of $M(f)$.
Since $S$ intersects every orbit of $\Phi(f)$, it follows that $\calL(f)$ is homeomorphic to $\cover{S}$, the universal cover of $S$.
Furthermore, the stable and unstable foliations for $\Phi(f)$ induce singular transverse foliations $F^f$ and $F_f$ of $\calL(f)$.

Following Agol and Gu\'eritaud~\cite{Agol11, Gueritaud16}, we form $S^\circ$ by removing the singular points from $S$.
Let $f^\circ$ be the restriction of $f$ to $S^\circ$.
Thus $M(f^\circ)$ is obtained from $M(f)$ by \emph{drilling}.  
The transverse foliations in $\calL(f^\circ)$ are now non-singular.

The goal of this paper is to generalise leaf spaces to what we call \emph{loom spaces}.
We provide axioms, discuss a number of examples, and draw out connections to \emph{veering triangulations}~\cite{Agol11}.

\subsection{This paper}

A \emph{loom space} $\calL$ is a copy of $\RR^2$ equipped with transverse (non-singular) foliations $F^\calL$ and $F_\calL$, satisfying two further axioms (\refdef{Loom}).  

In \refsec{LoomSpaces} we list several families of examples of loom spaces.  
We also discuss elementary relationships between the various \emph{skeletal rectangles} appearing in a loom space. 
In \refsec{Cusps} we formalise the notion of a \emph{cusp} of a loom space. 
These play an important combinatorial role in the rest of the work.

In \refsec{Astroid} we prove a key finiteness result: 
the \emph{astroid lemma} (\reflem{Astroid}).
This places a strong restriction on the projections of certain cusps to certain leaves of the two foliations of $\calL$. 
See \refrem{Others} for several versions of the astroid lemma appearing in previous work.

In \refsec{Veering} we review the basics of ideal triangulations and introduce \emph{locally veering triangulations}; 
we show in \refprop{LocallyVeering} that these are a mild generalisation of veering triangulations.  
In \refsec{Construction} we give our version of Gu\'eritaud's construction~\cite{Gueritaud16}.  
We then prove the following. 

\begin{restate}{Proposition}{Prop:Functorial}
Gu\'eritaud's construction is a functor from the category of loom spaces to the category of locally veering triangulations. 
\end{restate}


In \refsec{Convex} we define notions of \emph{geodesics} and \emph{convexity} in loom spaces.  
Using these we prove the following. 

\begin{restate}{Theorem}{Thm:ThreeSpace}
For any loom space, the topological realisation of its induced triangulation is homeomorphic to $\RR^3$. 
\end{restate}

\subsection{Future work}
\label{Sec:Future}

The functor $\veer$ given by Gu\'eritaud's construction is, in fact, an \emph{equivalence} from $\Loom(\RR^2)$, the category of loom spaces, to $\Veer(\RR^3)$, the category of veering triangulations of $\RR^3$. 
That is, there is a functor $\loom \from \Veer(\RR^3) \to \Loom(\RR^2)$ so that the $\loom \circ \veer$ and $\veer \circ \loom$ admit natural transformations to the identities on $\Loom(\RR^2)$ and $\Veer(\RR^3)$, respectively. 
In joint work with Steven Frankel~\cite{FrankelSchleimerSegerman22}, we will prove this by building, from a veering triangulation $\calV$ of $\RR^3$, a \emph{veering circle}, a pair of laminations in that circle, and thus a \emph{link space} $\loom(\calV)$.  
After proving that $\loom(\calV)$ is a loom space we check naturality. 

In other work, joint with Jason Manning, we will show how the veering circle for $\calV$ compactifies the link space $\loom(\calV)$ to give the \emph{veering disk} $\disk(\calV)$. 
We will then use the astroid lemma (\reflem{Astroid}) to give a careful description of various Hausdorff limits in $\disk(\calV)$.  
Further work will prove, when $\calV$ gives a finite-volume cusped hyperbolic three-manifold $M$, that the \emph{veering two-sphere} is equivariently homeomorphic to the Bowditch boundary of $\pi_1(M)$: that is, to $\bdy \HH^3$.
We will then use naturality to obtain new examples of Cannon-Thurston maps.

\subsection{Constructions of veering triangulations}

The definition and first construction of veering triangulations in the fibred case are due to Agol~\cite{Agol11}. 
The second author and collaborators generalised the definition~\cite{HRST11}; they also answered a question of Agol, using a computer search to find the first non-fibred examples. 
Gu\'eritaud~\cite{Gueritaud16} gave an alternative construction in the fibred case, which has inspired much later work, including this paper. We~\cite{Segerman15} announced a procedure to perform Dehn surgery along horizontal annuli or M\"obius strips in veering triangulations. 
We gave an implementation of a special case of this in the file \texttt{veering\_dehn\_surgery.py} in our codebase~\cite{Veering21}.
Shortly afterwards, Agol and Gu\'eritaud~\cite{Agol15} announced an extension of Gu\'eritaud's construction to drillings of manifolds admitting pseudo-Anosov flows without perfect fits.

A computer-generated census of all transverse veering triangulations with up to 16 tetrahedra was found by Giannopolous and ourselves~\cite{GSS19}.
Chi Cheuk Tsang~\cite{Tsang21} announced a procedure very similar to our veering Dehn surgery, which he calls \emph{horizontal surgery}.
He also introduced \emph{vertical surgery} along strictly ascending loops in the stable branched surface.
Landry, Minsky, and Taylor~\cite{LandryMinskyTaylor21} gave an exposition of the Agol-Gu\'eritaud construction. 
Moreover, they proved that the veering triangulation can be made smoothly transverse to the pseudo-Anosov flow.

\subsection*{Acknowledgements}

We thank the referee for their careful reading, which greatly improved the work. 
The second author was supported in part by National Science Foundation grants DMS-1708239 and DMS-2203993.
We thank Sabetta Matsumoto for sourcing the fabric shown in \reffig{Fabric}.

\section{Loom spaces}
\label{Sec:LoomSpaces}

\subsection{Rectangles}

Suppose that $\calL$ is a copy of $\RR^2$, equipped with two transverse foliations $F^\calL$ and $F_\calL$. 
We call these the \emph{upper} and \emph{lower} foliations respectively.


\begin{remark}
\label{Rem:Leaves}
The foliations $F^\calL$ and $F_\calL$ have no singularities in $\calL$. 
Thus, by the Poincar\'e--Hopf theorem~\cite[page~35]{Milnor65}, any two leaves are equal, disjoint, or intersect in exactly one point. 
We deduce that every leaf is properly embedded in $\calL$.
Thus, by the Jordan curve theorem~\cite[page~94]{Wilder49}, every leaf separates $\calL$. 
\end{remark}

\begin{definition}
\label{Def:Rectangle}
A \emph{rectangle} $R$ in $\calL$ is an open subset equipped with a homeomorphism $f_R \from (0,1)^2 \to R$.
We require that $f_R$ sends intervals parallel to the $x$--axis to arcs of $F_\calL$ and sends intervals parallel to the $y$--axis to arcs of $F^\calL$. 
\end{definition}

\begin{lemma}
\label{Lem:Basis}
With $\calL$ as above, the rectangles give a basis for the topology of $\calL$. \qed
\end{lemma}


\begin{remark}
\label{Rem:Cardinal}
Since $\calL$ is simply connected, we may choose orientations for the foliations $F^\calL$ and $F_\calL$.
When we do this, we also assume that all rectangle maps $f_R$ preserve these orientations.
This allows us to refer to the directions south, east, north, and west in $\calL$.
\end{remark}

\begin{definition}
\label{Def:Sides}
Suppose that $F^\calL$ and $F_\calL$ are oriented.
Suppose that $R$ is a rectangle in $\calL$. 
Let $\gamma_t \from (0,1) \to (0,1)^2$ be given by $\gamma_t(s) = (t,s)$.
The \emph{west side} of $R$ is the set of accumulation points of the sequence of arcs $(f_R(\gamma_t))_{t\to 0}$. 
We define the south, east, and north sides of $R$ similarly.
Intersections of sides, when they exist, are called \emph{material corners} of $R$.
\end{definition}

\begin{figure}[htbp]
\subfloat[Cusp rectangle]{
\label{Fig:CuspRect}
\includegraphics[width = 0.4\textwidth]{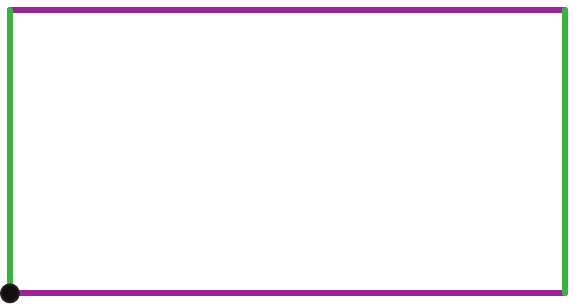}
}
\subfloat[Edge rectangle]{
\label{Fig:EdgeRect}
\includegraphics[width = 0.4\textwidth]{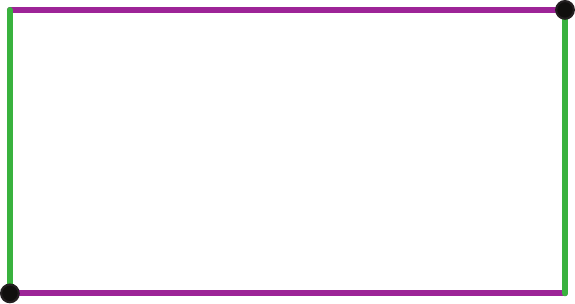}
}

\subfloat[Face rectangle]{
\label{Fig:FaceRect}
\includegraphics[width = 0.4\textwidth]{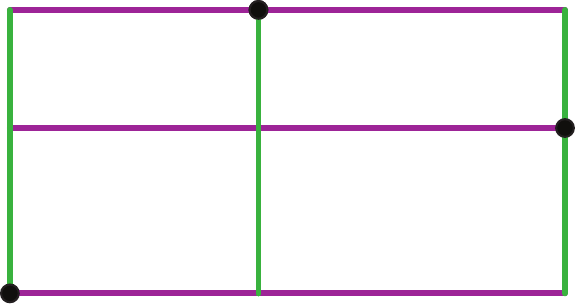}
}
\subfloat[Tetrahedron rectangle]{
\label{Fig:TetRect}
\includegraphics[width = 0.4\textwidth]{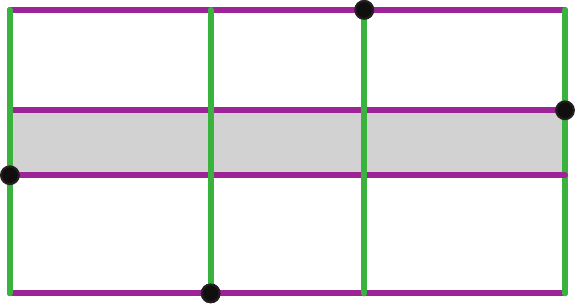}
}
\caption{Examples of cusp, edge, face, and tetrahedron rectangles. 
Here we indicate a point missing from the closure of a rectangle with a black dot.
In \refsec{Construction} we transform skeletal rectangles into cells of a triangulation.}
\label{Fig:SkeletalRects}
\end{figure}


\begin{definition}
\label{Def:CuspRect}
Suppose that $F^\calL$ and $F_\calL$ are oriented.
A rectangle $R$ in $\calL$ is a \emph{south-west cusp rectangle} if there is a continuous extension of $f_R$ to a homeomorphism 
\[
\closure{f}_R \from [0,1]^2 - \{(0,0)\} \to \closure{R}
\]
We define \emph{south-east}, \emph{north-east}, and \emph{north-west cusp rectangles} similarly. 
Suppose that $R$ is a south-west cusp rectangle. 
Note that the north and east sides of $R$ are closed intervals 
and that the south and west sides of $R$ are half-open intervals. 
We call the south and west sides of $R$ \emph{cusp sides}. 
We orient the south side to the east and the west side to the north.
We make similar definitions for the other types of cusp rectangle.
\end{definition}

See \reffig{CuspRect} for an example of a south-west cusp rectangle. 

\begin{definition}
\label{Def:TetRect}
A rectangle $R$ in $\calL$ is a \emph{tetrahedron rectangle} if there are $a, b, c, d \in (0,1)$ and a continuous extension of $f_R$ to a homeomorphism 
\[
\closure{f}_R \from [0,1]^2 - \{(a,0), (1,b), (c,1), (0,d)\} \to \closure{R}\qedhere
\] 
\end{definition}

See \reffig{TetRect} for an example of a tetrahedron rectangle.

\subsection{Loom spaces}

We are now ready to state our main definition.

\begin{definition}
\label{Def:Loom}
A \emph{loom space} $\calL$ is a copy of $\RR^2$ equipped with two transverse foliations $F^\calL$ and $F_\calL$ satisfying the following axioms.
\begin{enumerate}
\item 
\label{Itm:Cusp}
For every cusp side $s$ of every cusp rectangle, some initial open interval of $s$ is contained in some rectangle.
(See \reffig{CuspSide}.)
\item 
\label{Itm:Tet}
Every rectangle is contained in some tetrahedron rectangle. \qedhere
\end{enumerate}
\end{definition}



To explain the name \emph{loom space} we recall that the warp and weft of a fabric, as produced by a loom, give a pair of transverse foliations. 
See \reffig{Fabric}.

\begin{definition}
\label{Def:LoomIso}
Suppose that $\calL$ and $\calM$ are loom spaces.
We say that $f \from \calL \to \calM$ is a \emph{loom isomorphism} if
\begin{itemize}
\item
$f$ is a homeomorphism and 
\item
$f$ sends leaves to leaves. \qedhere
\end{itemize}
\end{definition}

\begin{wrapfigure}[11]{r}{0.24\textwidth}
\centering
\labellist
\small\hair 2pt
 \pinlabel {$R$} at 82 68 
\endlabellist
\includegraphics[width = 0.22\textwidth]{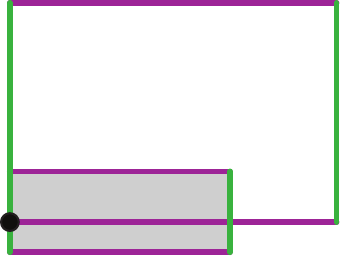}
\caption{A cusp rectangle $R$. An initial segment of its southern side is contained in the shaded rectangle.}
\label{Fig:CuspSide}
\end{wrapfigure}

Note that a loom isomorphism $f$ may send leaves of $F^\calL$ to leaves of $F^\calM$ \emph{or} to leaves of $F_\calM$.
We use $\Isom(\calL, \calM)$ to denote the set of loom isomorphisms from $\calL$ to $\calM$.
Note that loom isomorphisms compose in the usual way.
Thus loom spaces, together with their loom isomorphisms, form a category;
we denote this by $\Loom(\RR^2)$.
Finally, since loom isomorphisms have inverses 
the set $\Aut(\calL) = \Isom(\calL, \calL)$ is a group with respect to composition.



\subsection{Examples of loom spaces}

Our first loom space comes from a well-known example in dynamics.
The earliest exposition that we are aware of is due to Smale~\cite[page~757]{Smale67}.

\begin{example}
\label{Exa:AnosovMap}
Suppose that $A_0 \in \SL_2(\ZZ)$ is an \emph{Anosov matrix}: that is, $\trace(A_0)^2 > 4$.  
As an example, in \reffig{AnosovMap} we take
\[
A_0 = 
\begin{pmatrix}
2 & 1 \\
1 & 1
\end{pmatrix}
\]
Let $T = \RR^2 / \ZZ^2$ be the two-torus; 
let $A$ be the homeomorphism of $T$ induced by $A_0$. 
Let $F^A$ and $F_A$ be the resulting eigenfoliations in $T$.
Let $x \in T$ be the image of the origin.
Let $\calL$ be the universal cover of $T^\circ = T - \{x\}$.
Define $F^\calL$ and $F_\calL$ by lifting the eigenfoliations. 
We claim that $\calL$, equipped with these foliations, is a loom space.
To prove this, one uses any Markov partition of $T - \{x\}$, compatible with $A$, to verify the axioms of \refdef{Loom}.

The deck transformations of the covering give examples of loom isomorphisms. 
We obtain two more isomorphisms by lifting the actions of the matrices 
\[
R = 
\begin{pmatrix*}[r]
0 & -1 \\
1 &  0 
\end{pmatrix*}
\quad
\mbox{and}
\quad 
G = 
\begin{pmatrix}
1 & 1 \\
1 & 0
\end{pmatrix}
\]
on $\RR^2$ to $\calL$.  
It is an exercise to show that these (and the deck transformations) generate $\Aut(\calL)$.
\end{example}

\begin{figure}[htbp]
\labellist
\small\hair 2pt
 \pinlabel \rotatebox{31.718}{$S$} at 115 20
 \pinlabel \rotatebox{31.718}{$W$} at -25 63
 \pinlabel \rotatebox{31.718}{$N$} at 5 210
 \pinlabel \rotatebox{31.718}{$E$} at 150 170
\endlabellist
\includegraphics[width = 0.6\textwidth]{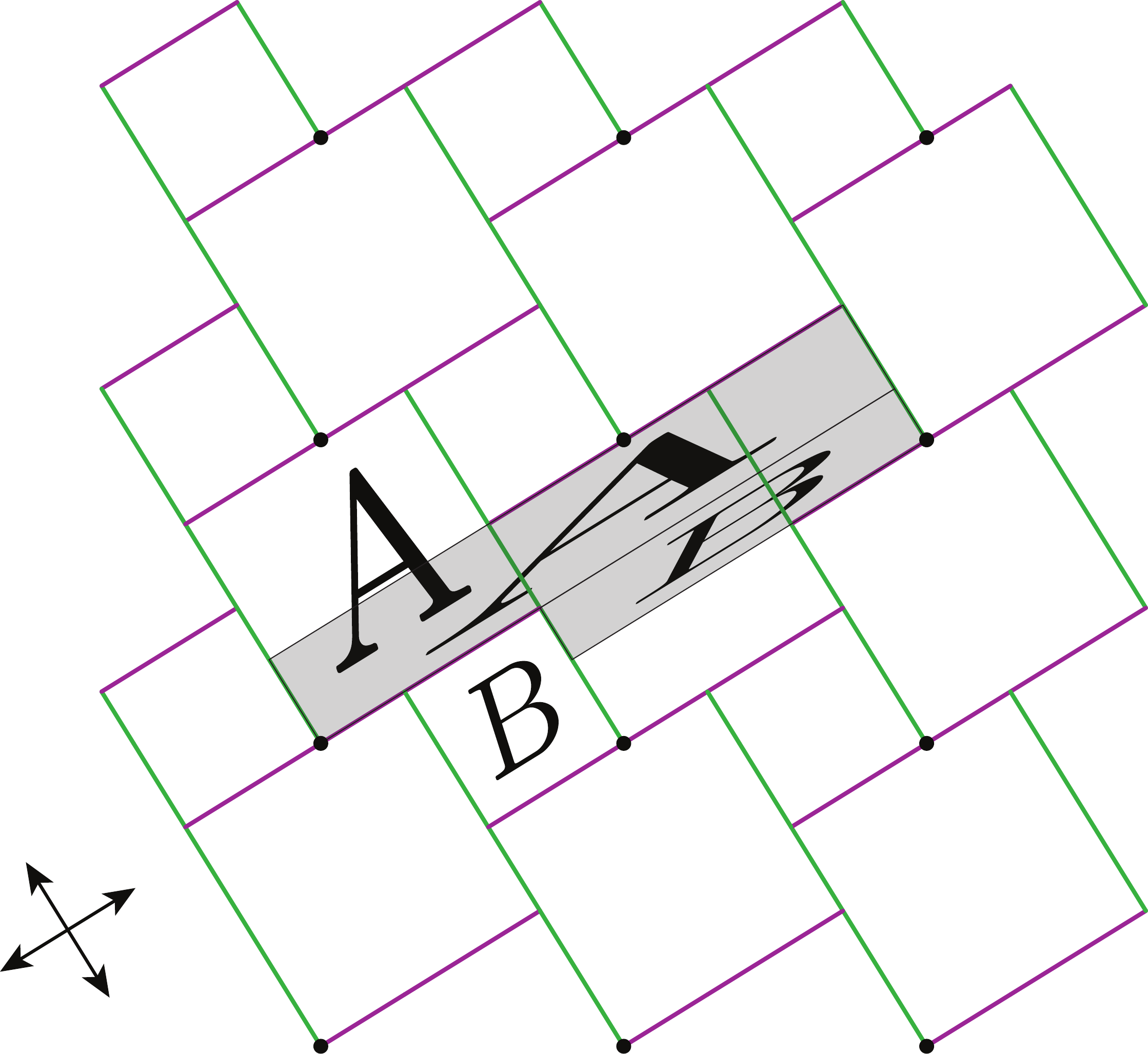}
\caption{The action of $A_0$ on its eigenfoliations.  
The dots are placed at integer lattice points. 
The rectangles containing letters (with their usual aspect ratio) are mapped by $A_0$ to the corresponding shaded rectangles.
These descend to $T$ to give a \emph{Markov partition}.}
\label{Fig:AnosovMap}
\end{figure}

Our next family of examples comes from work of Thurston~\cite[Theorem~4(ii)]{Thurston88};
as their name indicates, these generalise \refexa{AnosovMap} to surfaces of higher genus.

\begin{example}
\label{Exa:PseudoAnosovMap}
Suppose that $S$ is a closed, connected, oriented surface with genus two or more. 
Suppose that $f \from S \to S$ is a \emph{pseudo-Anosov map}: 
that is, there are transverse measured singular foliations $F^f$ and $F_f$, each preserved leafwise by $f$, whose measures are, by $f$, respectively expanded and contracted by a common factor $\lambda_f > 1$. 
Let $Z \subset S$ be the set of singularities of $F^f$ and $F_f$.
Let $S^\circ = S - Z$.
Form $\calL$ by taking the universal cover of $S^\circ$ and lifting the foliations. 
We claim that $\calL$, equipped with these foliations, is a loom space.
As before, this is proved by building a Markov partition.

The element $f$ and the deck transformations generate a free-by-cyclic group.  
In future work we will show that this is a finite index subgroup of $\Aut(\calL)$. See \refsec{Future}. 
\end{example}

We can generalise \refexa{PseudoAnosovMap} by instead taking $q$ to be a \emph{quadratic differential} on $S$.  
(Excellent introductions to abelian and quadratic differentials include~\cite{Zorich06} and~\cite{Wright15}.)
We must assume that the vertical and horizontal foliations $F^q$ and $F_q$ have no compact leaves.
Taking $Z$ to be the set of zeros of $q$, the rest of the construction is the same as \refexa{PseudoAnosovMap}.
This gives uncountably many examples of loom spaces.


Returning to the topological theme, suppose that $f \from S \to S$ is a surface homeomorphism.
As discussed in \refsec{Intro}, from $f$ we form the mapping torus $M(f)$ and its suspension flow $\Phi(f)$.
For an example, see \reffig{FigEightBox}.

Equipped with the results of this paper, it is a (difficult) exercise to show that $\Phi(f)$ is a \emph{pseudo-Anosov flow} if and only if $f$ is a pseudo-Anosov homeomorphism. 
For definitions, see~\cite[Section~6.6]{Calegari07}.
The exercise can also be deduced from various more general results in the literature: 
for example~\cite[Main~Theorem]{Fenley99b} or~\cite[Theorem~B]{Fenley03}.

Our next example generalises the exercise to other three-manifolds; 
we refer to Fenley's work, in particular~\cite[Definition~3.2]{Fenley98}, for an overview of pseudo-Anosov flows \emph{without perfect fits}.

\begin{figure}[htbp]
\includegraphics[width = 0.6\textwidth]{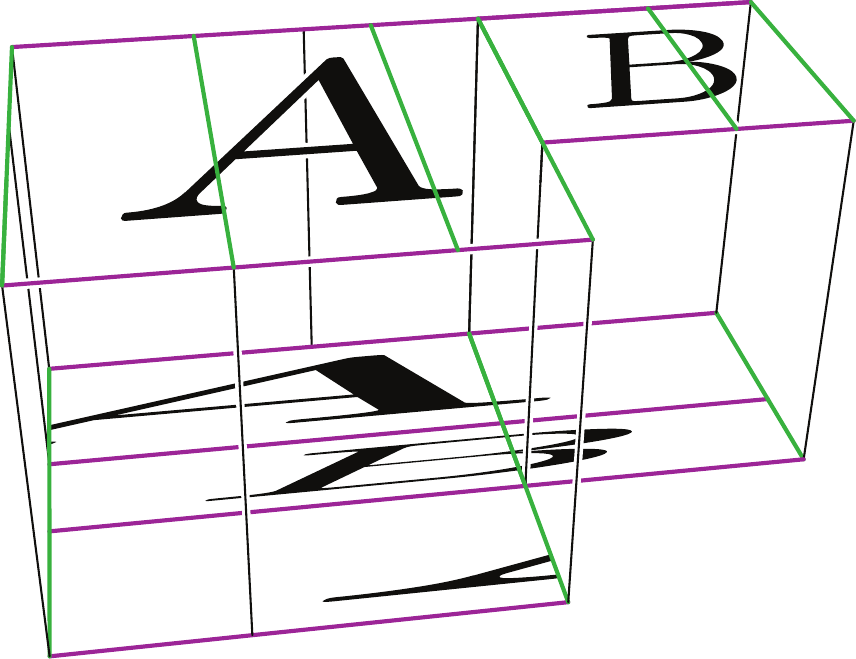}
\caption{A flow box decomposition of a torus bundle.
The associated flow is isomorphic to the suspension of \refexa{AnosovMap} and thus is Anosov.  
The manifold is homeomorphic to the longitudinal filling of the figure-eight knot complement.
Taking branched covers of the bundle, branched along the suspension of the origin, we obtain examples of pseudo-Anosov flows.}
\label{Fig:FigEightBox}
\end{figure}

\begin{example}
\label{Exa:PseudoAnosovFlow}
Suppose that $M$ is a closed, connected, oriented three-manifold.
Suppose that $\Phi \from M \cross \RR \to M$ is a \emph{topological pseudo-Anosov flow} (without perfect fits). 
(For the definition, see~\cite[page~80]{Mosher96}.)
Let $\Sigma^\Phi$ and $\Sigma_\Phi$ be the stable and unstable foliations of $M$. 
We remove from $M$ all singular flow loops to obtain the \emph{drilled space} $M^\circ$.  
We restrict $\Phi$ to $M^\circ$ to obtain $\Phi^\circ$ as well as the drilled foliations $\Sigma^\Phi_\circ$ and $\Sigma_\Phi^\circ$. 
We form the universal cover $\cover{M^\circ}$ and lift $\Phi^\circ$ as well as $\Sigma^\Phi_\circ$ and $\Sigma_\Phi^\circ$. 
The \emph{leaf space} $\calL^\circ = \calL\left(\cover{\Phi^\circ}\right)$ is the quotient of $\cover{M^\circ}$ by the flow $\cover{\Phi^\circ}$. 
The lifts of $\Sigma^\Phi_\circ$ and $\Sigma_\Phi^\circ$ descend to give non-singular foliations of $\calL^\circ$. 

Alternatively, we could form the universal cover $\cover{M}$ and the lifted flow $\cover{\Phi}$. 
Here the leaf space $\calL(\cover{\Phi})$ is homeomorphic to the plane $\RR^2$, equipped with a pair of singular foliations~\cite[Proposition~4.2]{FenleyMosher01}. 
See also~\cite[Lemma~6.53]{Calegari07}.
Furthermore, the singularities are a discrete subset of $\calL(\cover{\Phi})$.
So we may remove these and take a second universal cover.
This again gives $\calL^\circ$ and proves that it is homeomorphic to $\RR^2$.

In future work, we will give a combinatorial proof that $\calL^\circ$ is a loom space.
See also recent work of Landry, Minsky, and Taylor~\cite[Section~4]{LandryMinskyTaylor21}.
In addition, we will show that $\pi_1(M^\circ)$ lies in $\Aut(\calL^\circ)$ as a finite index subgroup.
\end{example}

\begin{remark}
Pseudo-Anosov flows and maps are closely related, respectively, to \emph{expansive flows} and maps.
These are defined by Bowen and Walters~\cite{BowenWalters72}.  
They give suspensions as a particular example in Section~4 of~\cite{BowenWalters72}.
In their Theorem~6 they prove that $\Phi(f)$ is expansive if and only if $f$ is expansive. 
For further discussion of the subtle connections between expansive and pseudo-Anosov flows, we refer the reader to~\cite{Brunella95} and~\cite{BonattiWilkinson05}.
\end{remark}

\begin{wrapfigure}[20]{l}{0.27\textwidth}
\vspace{4pt}
\centering
\includegraphics[width = 0.25\textwidth]{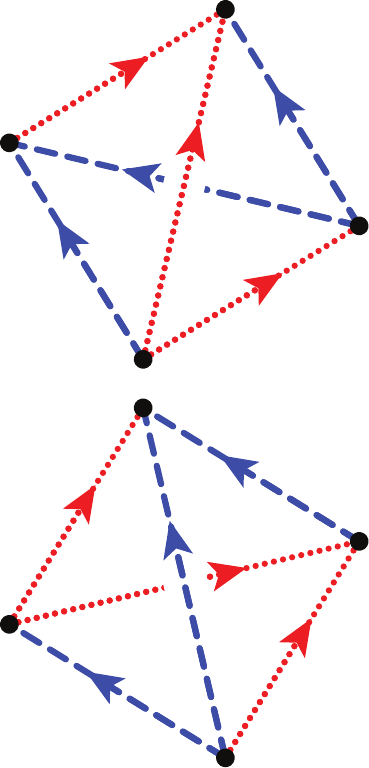}
\caption{The veering triangulation for the figure-eight knot complement.}
\label{Fig:FigEightTri}
\end{wrapfigure}

Removing the singular orbits of a pseudo-Anosov flow yields a manifold with torus boundary components. 
Our final examples are closely related to these \emph{drilled} flows but are completely combinatorial.
These rely on Agol's notion of a \emph{veering triangulation}~\cite[Definition~4.1]{Agol11}; 
see \refsec{Veering} for precise definitions and see \reffig{FigEightTri} for a concrete example.

\begin{example}
\label{Exa:Veering}
Suppose that $M$ is a compact, connected, oriented three-manifold with $\bdy M$ a non-empty collection of tori.
Suppose that $\calV$ is a \emph{veering} triangulation of $M$:
that is, an ideal triangulation of the interior of $M$ equipped with a \emph{taut} structure and a veering colouring.
(We give the definitions in \refsec{Definitions}.)
In future work we will show that 
\begin{itemize}
\item
there is a canonical \emph{link space} $\calL$ associated to the lift of $\calV$ to the universal cover of $M$, 
\item 
$\calL$ is a loom space, and
\item
$\pi_1(M)$ is finite index in $\Aut(\calL)$. 
\end{itemize}
For more details see \refsec{Future}.
\end{example}

The overall goal of this paper is to provide the converse to \refexa{Veering}. 
In \refprop{Functorial}, from a given loom space, we build a \emph{locally veering} triangulation.  
In \refthm{ThreeSpace} we prove that the realisation of this triangulation is homeomorphic to $\RR^3$. 

\begin{remark}
\label{Rem:Aperiodic}
All of the examples of loom spaces given above have large automorphism groups.
It is interesting to contemplate how one might obtain a finitely described, yet aperiodic, loom space. 
\end{remark}




\subsection{Cusps repel}

From now on, we will assume that $\calL$, equipped with the foliations $F^\calL$ and $F_\calL$, 
is a loom space in the sense of \refdef{Loom}.  
We begin our analysis of $\calL$ by proving a version of a condition introduced by Keane~\cite[Section~2]{Keane75}.
In the language of foliations, Keane's \emph{minimality condition} on a singular foliation is that there are no leaves connecting singularities.
Our foliations $F^\calL$ and $F_\calL$ do not have singularities; 
we instead use the ``missing points'' from the sides of tetrahedron rectangles.

\begin{lemma}
\label{Lem:Keane}
Suppose that $R$ is a tetrahedron rectangle with associated parameters $a$, $b$, $c$, and $d$. 
Then $a \neq c$ and $b \neq d$.
\end{lemma}

\begin{wrapfigure}[5]{r}{0.24\textwidth}
\centering
\labellist
\scriptsize\hair 2pt
\pinlabel {$m$} [t] at 162 62 
\endlabellist
\includegraphics[width = 0.22\textwidth]{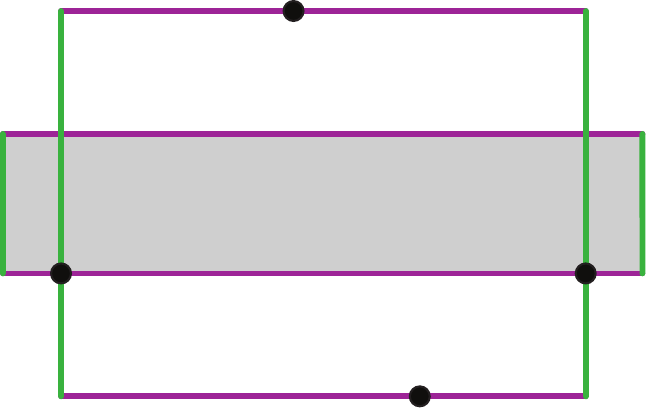}
\caption{}
\label{Fig:Keane}
\end{wrapfigure}

\begin{proof}
For a contradiction, suppose that $d = b$.
See \reffig{Keane}.
Let $m$ be the leaf of $F^\calL$ inside of $R$ running from $(0, b)$ to $(1, b)$. 
Fix a small positive number $\epsilon < 1/2$ so that $b + \epsilon < 1$. 
We now build two open rectangles in $\RR^2$ as products of open intervals. 
\[
A = (0, \epsilon) \cross (b, b + \epsilon)
\qquad
B = (1 - \epsilon, 1) \cross (b, b + \epsilon)
\]
Thus the images $f_R(A)$ and $f_R(B)$ are cusp rectangles in $\calL$ with south sides contained in the leaf $m$.

Applying \refdef{Loom}\refitm{Cusp} twice, we obtain small rectangles $A'$ and $B'$
containing initial segments of the west and east sides of $A$ and $B$, respectively.
Cutting these with a leaf of $F_\calL$ we obtain a single rectangle $C \subset \calL$ which has four material corners and which contains $m$ in the interior of its south side.
Thus $C$ is not contained in any tetrahedron rectangle.
This contradicts the axiom \refdef{Loom}\refitm{Tet}, as desired.

A similar argument deals with the case that $a=c$.
\end{proof}

\subsection{Skeletal rectangles}

For the next two definitions we choose orientations as in \refrem{Cardinal}.
See \reffig{SkeletalRects} for the following definitions and lemmas.

\begin{definition}
\label{Def:EdgeRect}
A rectangle $R$ in $\calL$ is a \emph{red edge rectangle} if there is a continuous extension of $f_R$ to a homeomorphism 
\[
\closure{f}_R \from [0,1]^2 - \{(0,0), (1,1)\} \to \closure{R}
\]
An edge rectangle $R$ is \emph{blue} if the missing points are instead $(0,1)$ and $(1,0)$.
\end{definition}

\begin{definition}
\label{Def:FaceRect}
A rectangle $R$ in $\calL$ is a \emph{south-west face rectangle} if there are $a, b \in (0,1)$ and a continuous extension of $f_R$ to a homeomorphism 
\[
\closure{f}_R \from [0,1]^2 - \{(0,0), (1,a), (b,1)\} \to \closure{R}
\]
We define the three other types of face rectangle similarly. 
\end{definition}

We call edge, face, and tetrahedron rectangles \emph{skeletal} rectangles. 
We do not include cusp rectangles among the skeletal rectangles because they are not uniquely determined by their cusps.

\begin{lemma}
\label{Lem:TetFaceEdge}
\leavevmode
\nopagebreak
\begin{itemize}
\item
Every tetrahedron rectangle contains exactly four face rectangles.
\item
Every tetrahedron rectangle contains exactly six edge rectangles.
\item
Every face rectangle contains exactly three edge rectangles.
\end{itemize}
\end{lemma}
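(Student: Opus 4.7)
The strategy is to reduce all three statements to counting subsets of the cusp set, via the key observation that \emph{the cusps of any sub-rectangle of a tetrahedron or face rectangle $R$ lie among the cusps of $R$ itself}. Fix $R$ a tetrahedron rectangle with parametrization $f_R \from (0,1)^2 \to R$ and cusps $p_S = (a,0)$, $p_E = (1,b)$, $p_N = (c,1)$, $p_W = (0,d)$. Any rectangle $R' \subseteq R$ corresponds via $f_R$ to a sub-box $(s_1, s_2) \cross (t_1, t_2) \subseteq (0,1)^2$; since the foliations have no singularities in $\calL$ (\refrem{Leaves}) and $\closure{f}_R$ is defined off the four cusps, the cusps of $R'$ are exactly those $p_X$ that lie on the boundary of the closed sub-box $[s_1, s_2] \cross [t_1, t_2]$.

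For the first bullet, a face rectangle contained in $R$ has exactly three cusps by \refdef{FaceRect}, so the key observation pins each down to a three-element subset of $\{p_S, p_E, p_N, p_W\}$, giving at most $\binom{4}{3}=4$ candidates. For existence, given such a triple, I would produce a canonical sub-box by letting two of the chosen cusps sit in the interiors of a pair of opposing sides (forcing those sides of the sub-box) and the third sit at a corner (forcing the remaining two sides). It remains to verify that the corner diagonally opposite the cusp-corner is a material corner; this is exactly where \refitm{Keane} enters, guaranteeing that a point such as $(a,1)$ is a regular point of $\calL$ whenever $a \neq c$, and analogously for $b$ and $d$. A short case split according to the signs of $a - c$ and $b - d$ then determines the type (SW, SE, NW, or NE) of the resulting face rectangle; this case analysis is the main bookkeeping hurdle in the argument.

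The second bullet proceeds in the same spirit, now with two cusps at opposite corners. Since $a \neq c$, $b \neq d$, and $0 < a, b, c, d < 1$, no two of the four cusps share an $x$- or $y$-coordinate, so each two-element subset determines a unique non-degenerate axis-aligned bounding sub-box. A direct check on each of the $\binom{4}{2}=6$ pairs confirms that the two omitted cusps lie strictly outside this bounding box and that the two unused corners are material, yielding exactly six edge rectangles (of red or blue type according to the orientation of the pair). The third bullet is then immediate from the same principle applied inside a face rectangle: such a rectangle has exactly three cusps by \refdef{FaceRect}, each of the $\binom{3}{2}=3$ pairs determines a unique edge rectangle sub-region by the bounding-box construction, and the verification from the second bullet confirms that no extra cusps appear.
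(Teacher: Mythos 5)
Your framework — bounding the count via the observation that cusps of a sub-rectangle of $R$ lie among the cusps of $R$, then exhibiting one sub-rectangle per admissible cusp subset — matches the paper's argument in spirit, and the edge-rectangle (bounding box) and third-bullet steps are sound. But the face-rectangle existence step contains a genuine error. You place two of the chosen cusps in the interiors of \emph{opposing} sides of the sub-box with the third at a corner, and say \refdef{Loom}\refitm{Keane} is needed to verify that the corner \emph{diagonally opposite} the cusp-corner is material. Both claims are wrong. By \refdef{FaceRect}, the two non-corner cusps of a face rectangle lie on the interiors of two \emph{adjacent} sides (east and north, for a south-west face rectangle), and the corner requiring Keane is one \emph{adjacent} to the cusp-corner; the diagonally opposite corner of the sub-box is necessarily a corner of $[0,1]^2$, hence never a cusp of $R$ and always material with no appeal to Keane.

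The error is not cosmetic: the construction as written fails. For the triple $\{(a,0),(1,b),(c,1)\}$, placing $(a,0)$ and $(c,1)$ in the interiors of opposing sides of the sub-box forces $t_1=0$ and $t_2=1$, after which the cusp $(1,b)$, with $b\in(0,1)$, can lie at no corner of that sub-box. The correct construction (which the paper gives) takes a half-strip: assuming $a<c$ by Keane, the sub-box $(a,1)\times(0,1)$ has $(a,0)$ at its south-west corner, $(1,b)$ and $(c,1)$ on the interiors of its east and north sides, $(0,d)$ excluded, and the \emph{adjacent} corner $(a,1)$ material precisely because $a\neq c$. Your case analysis on the signs of $a-c$ and $b-d$ then determines which cusp of the triple plays the corner role and hence the type of the resulting face rectangle.
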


\begin{proof}
Suppose that $R$ is a tetrahedron rectangle and let $f_R$ be the given parametrisation. 
There are at most four face rectangles in $R$; 
the sides of each intersect three of the four sides of $R$.

Let $(a, 0)$ and $(c, 1)$ be the missing points on the southern and northern sides.  
Appealing to \reflem{Keane} and breaking symmetry, suppose that $a < c$. 
Then 
\[
F = f_R \left( \{ (x, y) \in (0, 1)^2 \st x > a \} \right)
\]
is one of the desired face rectangles.  
The remaining three are formed similarly.

The other two statements are proved similarly.
\end{proof}

\begin{lemma}
\label{Lem:FaceTet}
Every face rectangle is contained in exactly two tetrahedron rectangles. 
\end{lemma}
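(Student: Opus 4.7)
Without loss of generality, assume $F$ is a south-west face rectangle with cusps $C_{0}$ (sw corner), $C_{1}$ (east side), and $C_{2}$ (north side); let $\ell^{-}, \ell^{|}, \ell^{-}_{N}, \ell^{|}_{E}$ denote the leaves through $F$'s south, west, north, and east sides.

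A key observation used throughout is that no rectangle in $\calL$ has a cusp in its interior: if a cusp were interior to a rectangle $R$, then its two associated transverse leaves would both traverse the interior of $R$ and, by $R$'s parametrisation, meet there --- contradicting the defining property of a cusp. Combined with \reflem{TetFaceEdge} and the orientation conventions of \refrem{Cardinal}, this forces any tet rectangle $T \supset F$ to have $C_{1}$ on its east side and $C_{2}$ on its north side, while $C_{0}$, as a non-material point at the sw corner of $F$, must lie on either $T$'s south side (Case S) or $T$'s west side (Case W); the fourth cusp of $T$ then sits on the remaining western or southern side.

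For uniqueness within each case, suppose $T, T'$ both contain $F$ in Case S. By interior-cusp-freeness their south, east, north leaves all coincide with $\ell^{-}, \ell^{|}_{E}, \ell^{-}_{N}$. Without loss of generality $T$'s west leaf lies weakly west of $T'$'s, so $T' \subseteq T$. Then $T'$'s west side is a leaf segment whose interior lies in the cusp-free interior of $T$ and whose endpoints --- the material sw and nw corners of $T'$ --- cannot coincide with the cusps $C_{0}$ or $C_{2}$ on $T$'s south and north sides. Hence $T'$'s west side carries no cusps, contradicting the tetrahedron rectangle structure of $T'$. Therefore $T = T'$, and Case W is handled symmetrically.

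For existence, \refdef{Loom}\refitm{Tet} gives at least one tet rectangle containing $F$, say $T_{S}$ in Case S. To produce the other, I would apply \refdef{Loom}\refitm{Cusp} to the west cusp side of a small south-west cusp sub-rectangle $R_{0} \subset F$ at $C_{0}$, obtaining a rectangle that extends $\ell^{|}$ past $C_{0}$ southward. I would then use this extension to build a rectangle $R''$ containing $F$ whose west, east, and north sides lie on $\ell^{|}, \ell^{|}_{E}, \ell^{-}_{N}$ and whose south side lies strictly south of $\ell^{-}$; applying \refdef{Loom}\refitm{Tet} to $R''$ yields a tet rectangle $T_{W} \supset F$, and the interior-cusp-freeness argument pins $T_{W}$'s west, east, and north sides to those of $R''$, placing $T_{W}$ in Case W. The hard part will be this final existence step: verifying that $R''$ really is a rectangle requires showing that certain transverse pairs of leaves actually meet in $\calL$, and forcing $T_{W}$ into Case W requires a careful combined use of \refdef{Loom}\refitm{Cusp} and \refdef{Loom}\refitm{Tet}.
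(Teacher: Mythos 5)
Your uniqueness argument is correct and close in spirit to the paper's at-most-two step: the no-interior-cusp observation forces $C_1$ and $C_2$ onto the east and north sides of any $T \supset F$, forces $C_0$ onto the south or west side, and two tetrahedron rectangles in the same case cannot properly nest without putting a side-cusp of the inner one into the cusp-free interior of the outer.

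The existence half, which you yourself flag as incomplete, contains a concrete error in the sketch. Applying \refdef{Loom}\refitm{Cusp} to the \emph{west} cusp side of $R_0$ yields a rectangle extending westward across $\ell^{|}$, not southward past $C_0$: the ideal end of that cusp side at $C_0$ must land on the boundary of the parametrising square, so the resulting rectangle cannot reach below the level of $C_0$. That is precisely the extension that produces a Case~S tetrahedron --- the one you already have. To manufacture the Case~W tetrahedron you must instead apply \refdef{Loom}\refitm{Cusp} to the \emph{south} cusp side of $R_0$, giving a rectangle that crosses $\ell^{-}$; then, as in the paper's proof, cover the remaining compact part of $F$'s south side by finitely many rectangles and feed the resulting $R''$ to \refdef{Loom}\refitm{Tet}. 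Relatedly, the $R''$ you describe cannot exist as stated: you ask for its west side to lie on $\ell^{|}$ while its south side lies strictly south of $\ell^{-}$, but $\ell^{|}$ is a cusp leaf terminating at $C_0$ at the level of $\ell^{-}$, so it cannot meet any horizontal leaf strictly south of $\ell^{-}$. The correct $R''$ carries $C_0$ as a cusp in the interior of its west side, with the portion below $C_0$ lying on the southward-going cusp leaf of $C_0$, a different leaf from $\ell^{|}$.
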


\begin{proof}
Breaking symmetry, suppose that $F$ is a north-west face rectangle.  
See \reffig{TetPairingLoom}.
Let $\delta_F$ be the northern side of $F$. 
Suppose that $C$ be a small rectangle contained in $F$;
suppose that the north and west sides of $C$ are contained in north and west sides of $F$. 
Let $\delta_C \subset \delta_F$ be the northern side of $C$. 

Since $C \subset F$ we deduce that $C$ is a cusp rectangle and that $\delta_C$ is a cusp side.   
By \refdef{Loom}\refitm{Cusp} we have that an initial segment of $\delta_C$ is contained in a rectangle, say $D$.
Note that $\epsilon = \delta_F - D$ is compact in $\calL$.  
Applying \reflem{Basis}, we cover $\epsilon$ by a finite collection of rectangles. 
We deduce that there is a rectangle $F'$ so that $F'$ contains both $F$ and $\delta_F$. 
We appeal to \refdef{Loom}\refitm{Tet} to obtain a tetrahedron rectangle $P$ containing $F'$.

Repeating the argument with the western side of $F$ gives another tetrahedron rectangle $Q$ containing $F$.
Note that $\delta_F$ lies in the interior of $P$ and lies in the northern side of $Q$. 
Hence $P$ and $Q$ are distinct.

We now show that there are at most two tetrahedron rectangles containing $F$.
Suppose that $R$ is any tetrahedron rectangle containing $F$. 
Then the southern and eastern sides of $R$ contain those of $F$.  
By \refdef{FaceRect}, the closure of $F$ is missing three points.  
The missing point at the north-western corner is necessarily the missing point from either the western or northern side of $R$.  
We deduce that $R$ is thus equal to $P$ or to $Q$. 
\end{proof}

It is more difficult to prove that an edge rectangle $E$ is contained in only finitely many face rectangles.  
This is deferred to \refcor{EdgeFace}.

\section{Cusps and corners}
\label{Sec:Cusps}

\begin{wrapfigure}[9]{r}{0.5\textwidth}
\vspace{-5pt}
\centering
\subfloat[]{
\labellist
\small\hair 2pt
 \pinlabel  $R_0$ at 177 25
 \pinlabel  $R_1$ at 137 107
 \pinlabel  $R_2$ at 50 141
\endlabellist
\centering
\includegraphics[width = 0.22\textwidth]{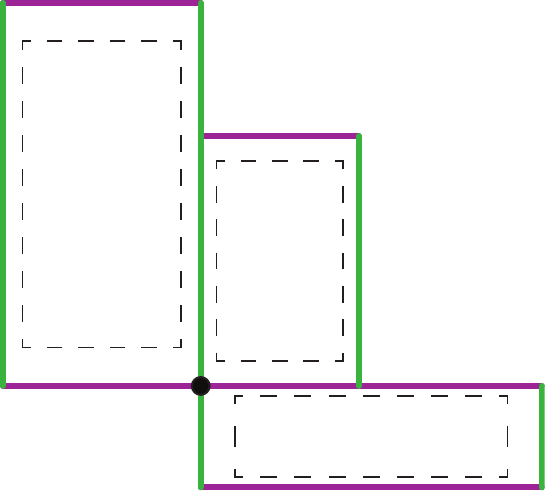}
}
\subfloat[]{
\labellist
\small\hair 2pt
 \pinlabel $R_0$ at 147 44
 \pinlabel $R_1$ at 67 122
\endlabellist
\centering
\includegraphics[width = 0.18\textwidth]{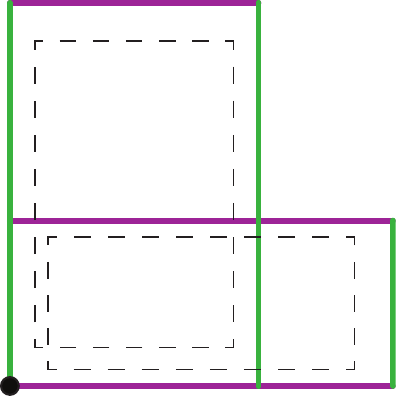}
}
\caption{}
\label{Fig:CuspRectangles}
\end{wrapfigure}

The \emph{cusps} of a loom space $\calL$ provide the beginnings of a boundary at infinity for $\calL$; 
we construct the boundary in future work, see \refsec{Future}.
This section provides the background needed for the statement of the astroid lemma. 

\begin{definition}
\label{Def:Equivalent}
Suppose that $R$ and $Q$ are cusp rectangles in $\calL$. 
We say that $R$ is \emph{equivalent} to $Q$ if there is a finite sequence of cusp rectangles 
\[
(R = R_0, R_1, \ldots, R_n = Q)
\]
so that for each pair $(R_i, R_{i+1})$ some cusp side of one is contained in some cusp side of the other.
\end{definition}

The equivalence relation is illustrated in \reffig{CuspRectangles}.

\begin{definition}
\label{Def:Cusp}
A \emph{cusp} is an equivalence class of cusp rectangles.
\end{definition}

For any cusp rectangle $R$, if $c = [R]$ then we refer to $R$ as a \emph{cusp rectangle for $c$}.

\begin{definition}
\label{Def:CuspsOf}
Suppose that $Q$ is a subset of $\calL$.
We say that $c$ is a \emph{cusp of $Q$}
if there is a sequence of rectangles $(R_i)_{i = 0}^\infty$ with the following properties:
\begin{enumerate}
\item
$R_i$ is a cusp rectangle for $c$, for all $i$.
\item
$\closure{R_i} \cap Q$ is non-empty, for all $i$. 
\item
$R_{i+1} \subset R_i$, for all $i$.
\item
The intersection $\medcap_i R_i$ is empty. 
\end{enumerate}
We define $\Delta(Q)$ to be the set of cusps of $Q$. 
\end{definition}

Recall from \refdef{Sides} that a rectangle may have as many as four material corners.
Cusps provide any remaining corners, as follows.

\begin{definition}
Suppose that $Q \subset \calL$ is a rectangle. 
Suppose that $Q$ contains a cusp rectangle $R$ where the cusp sides of $R$ are contained in sides of $Q$.
Then we call $c = [R]$ an \emph{ideal corner} of $Q$.
\end{definition}

\begin{lemma}
\label{Lem:Corners}
Every rectangle has four corners.
At most two of these are ideal.
\end{lemma}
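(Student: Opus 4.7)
The plan is to use \refdef{Loom}\refitm{Tet} to place $Q$ inside a tetrahedron rectangle $T$ and then read off its corners from $T$'s parametrisation.

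Fix a tetrahedron rectangle $T \supseteq Q$ with continuous extension
\[
\closure{f}_T \from [0,1]^2 - \{(a,0),(1,b),(c,1),(0,d)\} \to \closure{T}.
\]
Because $f_T$ sends horizontal and vertical intervals to arcs of $F_\calL$ and $F^\calL$, the sub-rectangle $Q$ pulls back under $f_T$ to an axis-aligned open sub-rectangle $(\alpha_1, \alpha_2) \times (\beta_1, \beta_2)$ of $(0,1)^2$, with $0 \le \alpha_1 < \alpha_2 \le 1$ and $0 \le \beta_1 < \beta_2 \le 1$. The four corner positions of $Q$ are the four points $(\alpha_i, \beta_j) \in [0,1]^2$ for $i,j \in \{1,2\}$.

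For the first assertion I examine each position. If $(\alpha_i, \beta_j)$ is not one of the four missing points then $\closure{f}_T(\alpha_i, \beta_j)$ is a point of $\calL$ lying in the closures of the two sides of $Q$ incident to the position, and so is a material corner. If $(\alpha_i, \beta_j)$ is one of the missing points, let $C \subseteq Q$ be a sufficiently thin sub-rectangle with a vertex at $(\alpha_i, \beta_j)$ and the other three vertices at material points of $T$. Then $C$ is a cusp rectangle of the appropriate compass orientation, and its two cusp sides are sub-arcs of the two sides of $Q$ incident to the position, so $[C]$ is an ideal corner of $Q$. Every position is filled, yielding exactly four corners.

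For the second assertion I bound the number of ideal corners. Because each missing point of $T$ has a coordinate in $\{0,1\}$ while the other lies in $(0,1)$, and because $\alpha_1 < \alpha_2 \le 1$ and $\beta_1 < \beta_2 \le 1$, each corner of $Q$ can coincide with at most two of the four missing points; for example, SW can only equal $(a,0)$ or $(0,d)$. I then claim that no two adjacent corners of $Q$ can simultaneously be ideal. With SW $= (a,0)$ we have $\alpha_1 = a$ and $\beta_1 = 0$, and the strict inequality $\alpha_2 > a$ forbids SE from coinciding with the only missing point on the row $y = 0$. With SW $= (0,d)$ instead we have $\beta_1 = d$, and an ideal SE would force $b = d$, which is excluded by \refdef{Loom}\refitm{Keane}. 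The remaining adjacent pairs are ruled out by the same argument after relabelling. Hence at most two of the four corners are ideal, and when two are ideal they are opposite.

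The step requiring the most care is the exhibition of the sub-rectangle $C$ in the first assertion: one must choose both side lengths of $C$ short enough that its cusp sides land inside the sides of $Q$, not merely in the surrounding leaves. The Keane inequalities $a \ne c$ and $b \ne d$ of \refdef{Loom}\refitm{Keane} play no role there, but are precisely what eliminate the last awkward sub-case of the adjacency analysis.
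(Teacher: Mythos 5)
Your proof is correct and follows the same strategy the paper uses: by \refdef{Loom}\refitm{Tet}, embed $Q$ in a tetrahedron rectangle $T$ so that $Q$ pulls back to an axis-aligned product $(\alpha_1,\alpha_2)\times(\beta_1,\beta_2)$, read off the four corners from the four missing points of $T$, and then invoke \refdef{Loom}\refitm{Keane} to forbid two adjacent ideal corners. The paper states this as a three-sentence sketch; your write-up supplies exactly the case analysis that sketch leaves implicit, including the observation that Keane is only needed for the sub-cases where the two missing points in question lie on opposite sides of $T$.
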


\begin{proof}
Suppose that $R$ is the given rectangle.
By \refdef{Loom}\refitm{Tet} we have that $R$ is contained in a tetrahedron rectangle.
The result now follows from \reflem{Keane}.
\end{proof}

\begin{definition}
Fix a rectangle $R \subset \calL$. 
Suppose that $x$ and $y$ are corners (material or ideal) of $R$. 
We say that $x$ and $y$ are \emph{adjacent} if they are incident to a single side of $R$. 
If $x$ and $y$ are not adjacent then they are \emph{opposite}.
\end{definition}

\begin{definition}
Suppose that $\ell$ is a leaf of $F^\calL$ or $F_\calL$. 
Suppose that $R$ is a cusp rectangle in $\calL$ with a cusp side $\delta$. 
If $\delta$ is contained in $\ell$ then we call $\ell$ a \emph{cusp leaf for $c$}.
\end{definition}

It follows that $\delta$, the cusp side of $R$, contains an end of the leaf $\ell$. 

\begin{remark}
\label{Rem:CuspLeaves}
Suppose that $c$ is a cusp.  
As in \refrem{Leaves}, the Poincar\'e--Hopf theorem implies that any two cusp leaves for $c$ are disjoint. 
It follows that any leaf intersecting the interior of a rectangle $R$ is not a cusp leaf for an ideal corner of $R$.
\end{remark}

\begin{lemma}
\label{Lem:AtMostOne}
Any leaf $\ell$ of $F^\calL$ (or of $F_\calL$) is a cusp leaf for at most one cusp. 
\end{lemma}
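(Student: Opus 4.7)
The plan is to argue by contradiction: suppose $\ell$ is a cusp leaf for two distinct cusps $c_1 \neq c_2$. Choose cusp rectangles $R_i$ for $c_i$ with cusp sides $\delta_i \subset \ell$. Each $\delta_i$ is a half-open arc whose open end leaves every compact subset of $\calL$ toward $c_i$, so $\delta_i$ contains exactly one end of $\ell$. I would split into two cases according to whether $\delta_1$ and $\delta_2$ contain the same end of $\ell$ or opposite ends.

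In Case 1 (same end), I would shrink $R_1$ to a smaller cusp rectangle $R_3 \subset R_1$ of the same cusp type, with cusp side $\delta_3 \subset \delta_1$ chosen short enough to also lie in $\delta_2$. Then $\delta_3$ is a cusp side of $R_3$ contained in the cusp sides of both $R_1$ and $R_2$, so the chain $R_1, R_3, R_2$ witnesses $R_1 \sim R_2$ under \refdef{Equivalent}, giving $c_1 = [R_1] = [R_3] = [R_2] = c_2$ and a contradiction.

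In Case 2 (opposite ends), I would derive a contradiction from \refdef{Loom}\refitm{Tet} and \refdef{Loom}\refitm{Keane}. Reading off horizontal cusp leaves $m_1$, $m_2$ of $c_1, c_2$ from the other cusp sides of $R_1, R_2$, and arranging $R_1, R_2$ to lie on the same side of $\ell$ (possibly by replacing them by cusp rectangles of other types approaching $c_1$, $c_2$ from the same quadrant), I would pick a vertical leaf $\ell'$ on that side of $\ell$, close enough to $\ell$ to meet both $m_1$ and $m_2$ at material points. This produces a rectangle $R^*$ whose sides are arcs of $\ell$ (all the way from $c_1$ to $c_2$), $m_1$, $\ell'$, and $m_2$, with adjacent ideal corners $c_1, c_2$ at its south-west and north-west vertices. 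Applying \refdef{Loom}\refitm{Tet} to place $R^*$ inside a tetrahedron rectangle $T$, the two ideal corners of $R^*$ become missing points of $T$ lying on the single vertical line that corresponds to $\ell$; this forces $a = c$ in the notation of \refdef{TetRect}, violating \refdef{Loom}\refitm{Keane}. The degenerate sub-case $m_1 = m_2$ is handled by applying Case 1 to the horizontal leaf $m_1$ instead.

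The hardest step will be the construction of $R^*$ in Case 2: one must verify that the four bounding arcs enclose an open subset of $\calL$ homeomorphic to $(0,1)^2$ (with no other cusps or leaves intruding), that $\ell'$ genuinely meets $m_1$ and $m_2$ at material points, and that $R_1$, $R_2$ can be chosen on the same side of $\ell$. Once $R^*$ is verified to be a rectangle, the Keane violation via the containing tetrahedron rectangle finishes the proof.
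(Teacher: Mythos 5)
Your proof has the same skeleton as the paper's: split according to whether the two cusp sides contain the same end of $\ell$ or opposite ends, dispose of the same-end case by an explicit chain of cusp rectangles, and in the opposite-ends case land $\ell$ inside a tetrahedron rectangle where \refdef{Loom}\refitm{Keane} forces $a = c$. Where you diverge is the route taken in the second case. The paper uses \refdef{Loom}\refitm{Cusp} to enlarge each cusp side to a rectangle, notes that the leftover arc of $\ell$ is compact and covers it by finitely many rectangles, merges everything into a single rectangle containing all of $\ell$, and then applies \refdef{Loom}\refitm{Tet} directly. You instead build an explicit rectangle $R^*$ bounded by $\ell$, the horizontal cusp leaves $m_1$, $m_2$, and a nearby vertical leaf $\ell'$ cutting both. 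The verification you correctly flag as the hard step does in fact go through: a vertical leaf between $\ell$ and $\ell'$ cannot be trapped north of $m_1$ because it is not a cusp leaf for $c_1$ (the sector bounded by $\ell$ and $m_1$ contains no cusp leaves of $c_1$), so it must exit south through $m_1$, and symmetrically for $m_2$, which gives $R^*$ its product structure. Your treatment of the degenerate sub-case $m_1 = m_2$ is also sound: having placed both cusp rectangles on the east side of $\ell$, both cusp sides along $m_1 = m_2$ contain the \emph{western} end of that leaf, so your Case~1 applied to $m_1$ yields $c_1 = c_2$ and rules the sub-case out. The trade-off is that $R^*$ costs more verification than the paper's cover-and-merge step, which reaches the identical Keane contradiction with less machinery and without needing the auxiliary leaves $m_1$, $m_2$, $\ell'$ at all.
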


\begin{proof}
Suppose that $c$ and $d$ are distinct cusps of $\calL$.
Suppose for a contradiction that $\ell$ is a cusp leaf for both $c$ and $d$.
Let $R$ and $Q$ be cusp rectangles at $c$ and $d$ with cusp sides $\gamma$ and $\delta$, both contained in $\ell$. 
If $\gamma$ and $\delta$ contain the same end of $\ell$ then 
$c = d$, contrary to assumption. 
Thus $\gamma$ and $\delta$ contain the two ends of $\ell$.
By \refdef{Loom}\refitm{Cusp} there are rectangles $R'$ and $Q'$ that contain initial segments of $\gamma$ and $\delta$, respectively, in their interiors.

Applying \reflem{Basis}, we cover the compact interval $\ell - (R' \cup Q')$ by finitely many rectangles.
We deduce that all of $\ell$ is contained in a single rectangle.
By \refdef{Loom}\refitm{Tet}, this rectangle is contained in a tetrahedron rectangle.
Appealing to \reflem{Keane}, we arrive at the desired contradiction.
\end{proof}

\begin{lemma}
\label{Lem:OneEdgeTwoCusps}
Suppose that $R$ is an edge rectangle.
Then $R$ has two cusps.
\end{lemma}

\begin{proof}
Let $x$ be an interior point of $R$. 
Let $\ell^x$ be the leaf of $F^\calL$ containing $x$. 
Let $P$ and $Q$ be the two components of $R - \ell^x$.
These are both cusp rectangles.  
We must prove that $[P] \neq [Q]$. 

Set $c = [P]$.
Let $\ell^c$ and $m_c$ be the cusp leaves containing the cusp sides of $P$.
Suppose that $P'$ is any cusp rectangle equivalent to, and disjoint from, $P$.
Let
\[
(P = P_0, P_1, \ldots, P_n = P')
\]
be a minimal sequence of cusp rectangles satisfying \refdef{Equivalent}.
Minimality implies that $P_i$ is disjoint from $P_{i+1}$. 
See \reffig{CuspRectangles}.
By \refrem{Leaves} one of the leaves $\ell^c$ or $m_c$ separates $P_1$ from $R$, and thus from $Q$.
By minimality and induction, the same leaf separates $P_k$ from $Q$ for all $k > 0$.
\end{proof}

\begin{lemma}
\label{Lem:TwoCuspsAtMostOneEdge}
Suppose that $R$ and $R'$ are edge rectangles. 
Then $R = R'$ if and only if $\Delta(R) = \Delta(R')$.
\end{lemma}

\begin{proof}
The forward direction follows from \refdef{CuspsOf}.
Now suppose that $R \neq R'$.
Breaking symmetry, suppose that $x$ lies in $R$ and does not lie in $R'$.
Let $\ell^x$ and $m_x$ be the leaves of $F^\calL$ and $F_\calL$ respectively that contain $x$.
By \refrem{Leaves}, the intersection $\ell^x \cup m_x$ is the singleton set $\{x\}$. 
By \refrem{CuspLeaves}, the leaves $\ell^x$ and $m_x$ are not cusp leaves for either cusp of $R$.
If $\ell_x$ and $m_x$ both intersect $R'$ then by \refdef{Rectangle} they meet in a point $y \in R'$.
Since $x \notin R'$ we have that $x \neq y$ and we have reached a contradiction.

Breaking symmetry, we have that $\ell_x$ does not intersect $R'$.
Let $A$ and $B$ be the two components of $\calL - \ell^x$.
Again appealing to \refrem{Leaves}, the rectangle $R'$ is in one of $A$ or $B$, but not both.
Using \refrem{Leaves} once more, the sets $\Delta(A) \cap \Delta(R)$ and $\Delta(B) \cap \Delta(R)$ are both singletons.
Therefore $\Delta(R) \neq \Delta(R')$.
\end{proof}

\section{The astroid lemma}
\label{Sec:Astroid}

Here we prove the \emph{astroid lemma} (\reflem{Astroid}). 
This controls the projection of certain cusps to certain leaves of $F^\calL$ and $F_\calL$. 

\subsection{Staircases}

Suppose that $x$ is a point or a cusp of $\calL$.
Fix any rectangle $R$ with a corner at $x$. 
Following Gu\'{e}ritaud~\cite[Section 4.3]{Gueritaud16}, we make the following definition.

\begin{definition}
\label{Def:Staircase}
The \emph{staircase} $\stair(x, R)$ is the closure of the union of all rectangles $Q \subset \calL$ where
\begin{itemize}
\item
$x$ is a corner of $Q$ and 
\item
$Q \cap R$ is non-empty. \qedhere
\end{itemize}
\end{definition}
\noindent
We often write $\stair(x) = \stair(x, R)$, suppressing the choice of $R$. 

\begin{figure}[htbp]
\setlength{\abovecaptionskip}{20pt} 
\centering
\labellist
\small\hair 2pt
 \pinlabel  $x$ [tr] at 0 0
 \pinlabel  $c$ [bl] at 140 75
 \pinlabel  $\pi_m(c)$ [t] at 140 0
 \pinlabel  $\pi^\ell(c)$ [r] at 0 75
 \pinlabel  $\ell$ [br] at 0 570
 \pinlabel  $c_m$ [t] at 350 0
 \pinlabel  $m$ [tl] at 570 0
\endlabellist
\includegraphics[width=0.45\textwidth]{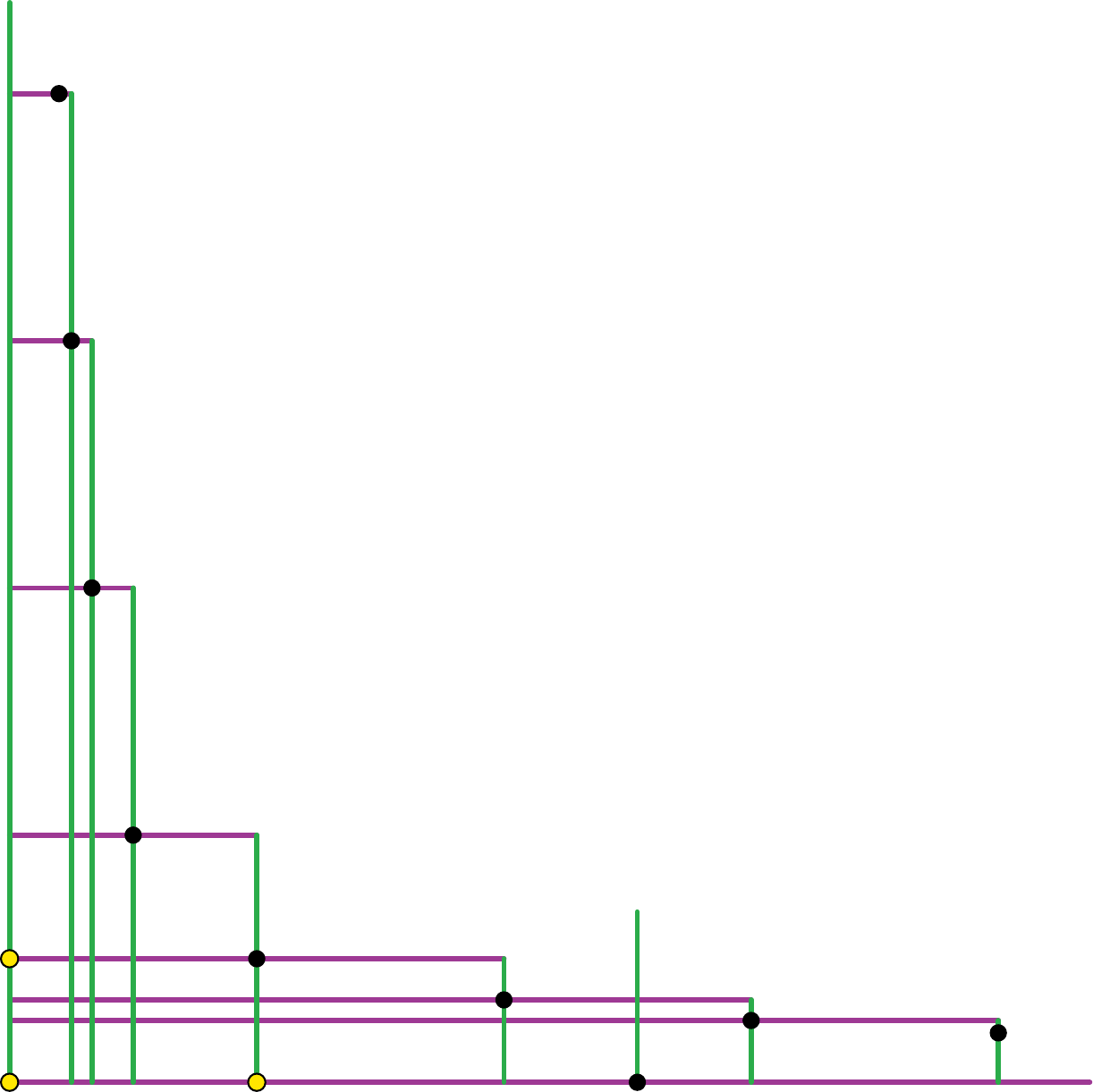}
\caption{A staircase. 
Cusps are indicated with black dots. 
Labelled material points are indicated by yellow dots.
In this example $x$ is a material point of $\calL$ to the west of a cusp $c_m$, the axis cusp of the lower axis ray $m$.}
\label{Fig:Staircase}
\end{figure}

\begin{definition}
\label{Def:AxisRays}
We take $m = m(x, R) \subset F_\calL$ to be the union of the arcs $s$ in $F_\calL$ so that there is a rectangle $Q$ so that
\begin{itemize}
\item
$Q \subset \stair(x, R)$, 
\item
$s$ is a side of $Q$, and
\item 
$s$ has an endpoint at $x$. 
\end{itemize}
We define $\ell = \ell(x, R) \subset F^\calL$ similarly.
We call $m$ and $\ell$ the \emph{lower} and \emph{upper axis rays}, respectively, for $\stair(x, R)$.
\end{definition}

\begin{definition}
\label{Def:AxisCusp}
Suppose that $\stair(x, R)$ is a staircase.
Suppose that $m$ and $\ell$ are its lower and upper axis rays.
Any cusp $c_m$ of $m$ is called an \emph{axis cusp} for $m$.
We make the same definition for a cusp $c_\ell$ of $\ell$.
\end{definition}

By \reflem{AtMostOne}, each axis ray has at most one axis cusp.

\begin{lemma}
\label{Lem:Initial}
Suppose that $\stair(x) = \stair(x, R)$ is a staircase. 
Breaking symmetry, suppose that $x$ is the southwest corner of $R$. 
Suppose that $m'$ is an initial segment of the lower axis ray $m$.
Then there is a rectangle $R' \subset \stair(x)$ so that 
\begin{itemize}
\item $x$ is the southwest corner of $R'$ and
\item $m'$ is the south side of $R'$.
\end{itemize}
\end{lemma}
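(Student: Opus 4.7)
The strategy is to find a single rectangle $Q \subset \stair(x, R)$ whose south side contains $m'$, and then trim $Q$ on the east by the upper leaf through the east endpoint of $m'$.

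Let $y \in m$ denote the east endpoint of the compact initial segment $m'$. Applying \refdef{AxisRays} to $y$, we obtain a rectangle $Q \subset \stair(x, R)$ and an arc $s \subset F_\calL$ that is a side of $Q$, has an endpoint at $x$, and contains the point $y$.

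I would next establish that $x$ must be the southwest corner of $Q$. By \refdef{Staircase} we have $Q \cap R \neq \emptyset$, and $x$ is a material corner of both $Q$ and $R$. Since $R$ is an open rectangle with southwest corner $x$, it is contained in the open northeast quadrant at $x$; since $Q$ is an open rectangle with a corner at $x$, it lies in one of the four open quadrants at $x$. As $Q$ and $R$ intersect, the rectangle $Q$ lies in the same open quadrant as $R$, so $x$ is the southwest corner of $Q$. Consequently, the only side of $Q$ in $F_\calL$ with an endpoint at $x$ is the south side, so $s$ is the south side of $Q$; in particular $m' \subset s$.

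Finally, let $\ell^y$ denote the leaf of $F^\calL$ through $y$. Since $y$ lies on the south side of $Q$, the leaf $\ell^y$ crosses the interior of $Q$ transversely to $F_\calL$. Let $R'$ be the open sub-rectangle of $Q$ lying strictly west of $\ell^y$. Then $R'$ has southwest material corner $x$ and southeast material corner $y$, and its south side is the subarc of $s$ from $x$ to $y$, which is exactly $m'$. Since $R' \subset Q \subset \stair(x, R)$, both conclusions follow. The main obstacle is the quadrant argument forcing $x$ to be the southwest corner of $Q$; once this is established, the construction of $R'$ by slicing $Q$ along $\ell^y$ is routine.
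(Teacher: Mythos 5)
Your approach is genuinely different from the paper's. The paper's proof begins near $x$ (using the basis of rectangles if $x$ is a point, or \refdef{Loom}\refitm{Cusp} if $x$ is a cusp), obtains a small rectangle $Q$ whose south side is a short initial segment $m''$, and then extends by covering the compact part of $m'$ lying outside $Q$ with finitely many more rectangles, treating the axis cusp $c_m$ as a separate case. You instead read the definition of $m$ directly: since $y \in m$ and $m$ is \emph{defined} as a union of sides $s$ of rectangles in $\stair(x,R)$ with an endpoint at $x$, some such $s$ contains $y$, and the sector argument forces $s$ to be the south side of a rectangle with $x$ at its southwest corner, so $m' \subset s$; then trim. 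This is a slicker route that gets a single big $Q$ for free rather than assembling one.

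However there are a few real gaps. First, the justification ``by \refdef{Staircase} we have $Q \cap R \neq \emptyset$'' is not correct as stated: \refdef{AxisRays} only guarantees $Q \subset \stair(x,R)$, and a rectangle contained in the closure of a union need not meet $R$. The conclusion you want is still true --- since $\stair(x,R) \subset \closure{\sector(x,R)}$ and $Q$ is open, $Q$ lies in $\sector(x,R)$, which forces $x$ to be the southwest corner of $Q$ --- but the reasoning needs to go through sectors, not through intersecting $R$. Relatedly, $x$ need not be a \emph{material} corner: $x$ may be a cusp, in which case it is an ideal corner of both $R$ and $Q$; this doesn't break the argument but the wording should not assume $x \in \calL$. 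Second, and more substantively, you assume $m'$ is compact with an east endpoint $y \in m$. This fails when $m'$ is half-open --- in particular when $m'$ terminates exactly at the axis cusp $c_m$ (so $m' = [x,c_m)$ and there is no point $y \in m$ to apply the definition to), and the paper later invokes the lemma for open initial segments in the proof of the astroid lemma. The case where $m'$ properly crosses $c_m$ does work in your scheme (then $y$ lies on the second leaf and the south side of $Q$ is disconnected at $c_m$), but $m'$ is then not compact, so the compactness phrasing should go. The paper's proof explicitly branches on whether $c_m \in m'$; your proof should do likewise, or restrict the claim to closed segments with endpoint in $m$.
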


\begin{proof}
Note that $x$ is either a point or a cusp of $\calL$. 
Thus using either \reflem{Basis} or \refdef{Loom}\refitm{Cusp}, respectively, there is an initial segment $m''$ of $m$ contained in a rectangle $Q$.
If $m''$ contains $m'$ then we cut $Q$ using the axis rays to obtain $R'$.

If not, there are two cases as the axis cusp $c_m$ is or is not contained in $m'$.
If $c_m$ is not contained in $m'$ then the interval $m' - Q$ is compact and so covered by a rectangle $Q'$.
Cutting $Q \cup Q'$ by the axis rays and reducing their height gives the desired rectangle $R'$.
If $c_m$ is contained in $m'$ then we apply \refdef{Loom}\refitm{Cusp} three times, and the remaining argument is as above.
\end{proof}

\begin{lemma}
\label{Lem:BdyQ}
Suppose that $x$ is the southwest corner of $R$. 
Then the lower axis ray $m$ has the following properties.
\begin{itemize}
\item
Suppose $x$ is a cusp.  
Then $m$ is a cusp leaf.
\item
Suppose $x$ is not a cusp.
Let $m_x$ be the leaf of $F_\calL$ containing $x$.
\begin{itemize}[label=$\circ$]
\item
If $m_x$ is a non-cusp leaf (or a cusp leaf emanating from the east of its cusp)
then $m$ is the eastern component of $m_x - x$.
\item
If $m_x$ is a cusp leaf emanating from the west of its cusp, then its cusp $c_m$ is the axis cusp for $m$. 
In this case $m - m_x$ is a cusp leaf emanating from the east of $c_m$.
\end{itemize}
\end{itemize}
A similar statement holds when $x$ is one of the other corners of $R$. 
Similar statements also hold for the upper axis ray $\ell$. 
\end{lemma}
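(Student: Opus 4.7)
The plan is to analyze the axis ray $m$ by examining which arcs contribute to it as sides of rectangles in the staircase. The main tool is \reflem{Initial}, which lets us realize any initial segment of $m$ as the south side of a rectangle with $x$ as southwest corner. The argument splits into three cases and relies on \refdef{Loom}\refitm{Cusp} and \refdef{Loom}\refitm{Tet} together with compactness arguments in the style of the proof of \reflem{FaceTet}. The claims for the upper axis ray $\ell$, and for the analogous situations at the other corner positions, follow by the obvious symmetries.

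First I would handle the case where $x$ is a cusp. Every rectangle $Q$ with a south side ending at $x$ is then a south-west cusp rectangle at $x$, so its south side is a cusp side lying in the unique south cusp leaf of $x$ in $F_\calL$; uniqueness comes from \refrem{CuspLeaves} and \reflem{AtMostOne}. Thus $m$ is contained in that cusp leaf. Conversely, given any compact initial subarc $\delta$ of the cusp leaf emanating east from $x$, \refdef{Loom}\refitm{Cusp} applied to a cusp side containing $\delta$ produces a larger rectangle, which once cut by axis rays yields a south-west cusp rectangle at $x$ with south cusp side $\delta$; by \reflem{Initial} it lies in $\stair(x)$, so $\delta \subset m$.

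Next I would handle the case where $x$ is material and $m_x$ has no cusp to the east of $x$. The south side of any contributing rectangle is an arc in $m_x$ starting at $x$ and going east, so $m$ is contained in the eastern component of $m_x - x$. For the reverse inclusion, given any compact subarc $\delta$ of that component, I would cover $\delta$ by finitely many rectangles using \refdef{Loom}\refitm{Tet} and a compactness argument as in the proof of \reflem{FaceTet}, and then cut and paste to produce a single rectangle $Q \subset \stair(x)$ with $x$ as southwest corner and south side $\delta$.

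The main obstacle is the remaining case, where $x$ is material and $m_x$ is a cusp leaf with its cusp $c_m$ east of $x$. The argument of the previous case shows that the segment of $m_x$ from $x$ up to $c_m$ lies in $m$. To continue past $c_m$, I would exploit the fact---already visible for the tetrahedron rectangle in \reffig{TetRect}---that a side of a rectangle can be multi-leaf, consisting of two arcs meeting ideally at a cusp. Concretely, \refdef{Loom}\refitm{Cusp} applied to a cusp side at $c_m$ emanating east produces a rectangle $D$ whose south boundary spans from an arc of $m_x$ west of $c_m$, across the cusp, to the new cusp leaf $\ell^*$ emanating east from $c_m$. Pasting $D$ with the rectangles from the previous case and invoking \refdef{Loom}\refitm{Tet}, one builds a rectangle $Q \subset \stair(x)$ with $x$ as southwest corner whose south side runs from $x$ east along $m_x$, across $c_m$, and into $\ell^*$. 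Iterating along $\ell^*$ then produces the remaining initial segments of $m$. The delicate point is verifying that the constructed $Q$ is really a rectangle in the sense of \refdef{Rectangle} and that it lies in the staircase; this requires careful attention to the local structure of the foliations near $c_m$, analogous to the east--west extension argument for face rectangles in the proof of \reflem{FaceTet}.
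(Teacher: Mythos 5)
Your proposal follows essentially the same approach as the paper: one containment by \reflem{Initial}, the other by connectedness together with disjointness of leaves (\refrem{Leaves}). The paper only writes out the case where $x$ is a cusp and declares the other cases ``similar''; you elaborate all of them, which is useful. Two small corrections. First, the ``delicate point'' you flag---building a rectangle whose south side crosses the axis cusp $c_m$---is already resolved inside \reflem{Initial} itself: its proof explicitly treats the sub-case where the initial segment $m'$ contains $c_m$, applying \refdef{Loom}\refitm{Cusp} twice. So you can cite \reflem{Initial} directly in the third case exactly as in the first two, rather than redoing its construction. Second, your claim that every $Q \subset \stair(x)$ with $x$ as its ideal southwest corner ``is a south-west cusp rectangle at $x$'' is slightly too strong, since $Q$ may have other ideal corners; the needed conclusion (that the south side of $Q$ lies in the cusp leaf $m'$) still follows because that side is a connected arc of $F_\calL$ meeting $m'$ and distinct leaves are disjoint. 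Finally, the ``iterating'' language in the third case is unnecessary: by \reflem{AtMostOne} the cusp leaf emanating east from $c_m$ has no cusp at its other end, so $m$ crosses at most one axis cusp.
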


\begin{proof}
Suppose that $x$ is a cusp.  
Shrinking $R$ slightly, we may assume that $R$ is a cusp rectangle for $x$.
Let $m'$ be the cusp leaf to the east of $x$ which contains the southern side of $R$.
We must show that the lower axis ray $m$ equals $m'$. 

Note $m$ is the union of connected sets (the southern sides of rectangles), 
all intersecting a connected set (the southern side of $R$).
Thus $m$ is connected. 
We deduce that $m$ is contained in $m'$. 

It remains to prove that $m'$ is contained in $m$. 
Let $m''$ be any closed initial segment of $m'$.
By \reflem{Initial}, the segment $m''$ is the southern side of some rectangle $R'$, showing that $m''$ lies in $m$.
We deduce that $m'$ is contained in $m$.

The remaining cases are similar.
\end{proof}

\begin{definition}
\label{Def:ExteriorCusps}
A cusp $c$ is an \emph{exterior cusp} of $\stair(x)$ if there is a rectangle in $\stair(x)$ having $c$ and $x$ as opposite corners.
We define $\ExtDel(\stair(x))$ to be the set of exterior cusps of $\stair(x)$.
\end{definition}

Note that $\ExtDel(\stair(x)) \subset \Delta(\stair(x))$.
When $x$ is a cusp, or when axis cusps exist, the containment $\ExtDel(\stair(x)) \subset \Delta(\stair(x))$ is proper. 
See \reffig{Staircase}. 

Let $\bdy m$ denote the end of the lower axis ray $m$ which is not at $x$ 
(or at the axis cusp $c_m$, if it exists).  
Note that $\bdy m$ is not at a cusp by Lemmas~\ref{Lem:AtMostOne} and~\ref{Lem:BdyQ}.
We define $\bdy \ell$ similarly.

We define a pair of projections 
\[
\pi_m \from \ExtDel(\stair(x)) \to m 
\quad 
\mbox{and} 
\quad
\pi^\ell \from \ExtDel(\stair(x)) \to \ell
\]
as follows.
Suppose that $c \in \ExtDel(\stair(x))$ is an exterior cusp.  
Then there is a rectangle $Q \subset \stair(x)$ with opposite corners at $x$ and $c$.  
We define $\pi_m(c)$ and $\pi^\ell(c)$ to be the corners of $Q$, 
\emph{other than} $x$, lying on $m$ and $\ell$ respectively.
See \reffig{Staircase}.

\subsection{Statement and proof}

We now control the images of the projections $\pi_m$ and $\pi^\ell$. 

\begin{lemma}[Astroid lemma]
\label{Lem:Astroid}
Suppose that $\stair(x)$ is a staircase in $\calL$; 
suppose that $m$ and $\ell$ are its axis rays. 
\begin{enumerate}
\item
\label{Itm:DoesNot}
The image of $\pi_m$ does not accumulate at any interior point of $m$ 
(nor does it accumulate at the axis cusp $c_m$, if present).  
\item
\label{Itm:Does}
The image of $\pi_m$ accumulates at $x$ and at $\bdy m$. 
\end{enumerate}
Similar statements hold for $\pi^\ell$. 
\end{lemma}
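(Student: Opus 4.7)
The plan combines two main tools: the \emph{four-cusp bound}---any rectangle is contained in a tetrahedron rectangle by axiom~\refitm{Tet}, hence contains at most four cusps in the sense of \refdef{CuspsOf}---and an \emph{antichain property} for exterior cusps, which I establish first. The antichain says that for distinct $c, c' \in \ExtDel(\stair(x))$, we cannot have $\pi_m(c) \leq \pi_m(c')$ and $\pi^\ell(c) \leq \pi^\ell(c')$ simultaneously: such inequalities force $Q_c \subseteq Q_{c'}$, making $c$ an ideal point interior to the cusp rectangle $Q_{c'}$, contradicting that the only ideal corner of $Q_{c'}$ is $c'$.

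For part~(1), suppose for contradiction that distinct exterior cusps $c_n$ satisfy $\pi_m(c_n) \to p$ with $p$ interior to $m$ or $p = c_m$. The antichain yields a subsequence with $\pi_m(c_n)$ and $\pi^\ell(c_n)$ monotone in opposite directions, $\pi^\ell(c_n) \to q$. Let $\ell^p$ and $m^q$ denote the relevant leaves through $p$ and $q$. The heart of the proof is case analysis on $q$. If $q$ is material interior to $\ell$ and $\ell^p \cap m^q = \{z\}$ for some $z \in \calL$, then the product foliation structure of a small rectangle around $z$ forces the cusp leaves $\ell^n, m^n$ of $c_n$ (converging respectively to $\ell^p, m^q$) to intersect in $\calL$ for $n$ large, contradicting that they meet only at the ideal cusp $c_n$. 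If instead $\ell^p \cap m^q = \emptyset$ in $\calL$, the two leaves pair up at a shared ideal cusp $c^*$, which is itself an exterior cusp of $\stair(x)$ with $\pi_m(c^*) = p$ and $\pi^\ell(c^*) = q$; then $Q_{c^*}$ contains every $Q_n$ as a sub-rectangle, so the $c_n$ become cusps of $Q_{c^*}$, violating the four-cusp bound. The bounded degenerate sub-cases $q \in \{x, c_\ell\}$ collapse $Q_n$ so that the $c_n$ limit geometrically to a single point or axis cusp of $\calL$, absorbed by a tetrahedron rectangle near this limit---same four-cusp contradiction. The unbounded sub-case $q = \bdy \ell$ is ruled out using part~(2): an exterior cusp $c^\dagger$ with $\pi_m(c^\dagger) < p$ would, via the antichain, need $\pi^\ell(c^\dagger) > \pi^\ell(c_n)$ for $n$ large---impossible since $\pi^\ell(c_n) \to \bdy \ell$.

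For part~(2), I produce exterior cusps accumulating at $x$ inductively: given any small rectangle $R \subset \stair(x)$ with $x$ at the SW corner, axiom~\refitm{Tet} yields a tetrahedron rectangle $T \supset R$ one of whose four cusps is the NE corner of a cusp sub-rectangle with $x$ at SW, hence an exterior cusp of $\stair(x)$ with $\pi_m$ value inside the south side of $R$; shrinking $R$ along $m$ produces a sequence with $\pi_m \to x$. Accumulation at $\bdy m$ is symmetric, using rectangles whose south side reaches far east along $m$.

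The main obstacle is two-fold: first, the unbounded sub-case of part~(1) must be handled \emph{after} part~(2) at $x$ is established, so the proof must be organized in that order; second, the interior-$q$ case where $\ell^p$ and $m^q$ fail to meet in $\calL$ requires the ``pairing at an ideal cusp'' property, which is not immediate from the axioms and will likely need separate analysis using axioms~\refitm{Cusp} and~\refitm{Tet}.
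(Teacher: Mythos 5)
Your antichain observation is exactly the paper's ``the rectangles $R_i$ cannot nest,'' and the monotone subsequence plus case split on the limit of $\pi^\ell(c_n)$ matches the paper's structure. The ``pairing at an ideal cusp'' worry you flag (the sub-case where $\ell^p$ and $m^q$ are disjoint) is handled uniformly by the paper without separate analysis: it forms the increasing union $R'_\infty$ of truncated $R_i$'s (cutting along $m_\infty$, the ray of $F_\calL$ through $s_\infty$), shows this is a rectangle, and invokes Lemma~\ref{Lem:Corners} to see that its fourth corner $c_\infty$ is automatically a point or a cusp of $\calL$. In either case, a small rectangle $P$ with $c_\infty$ in the interior of one side forces the two cusp leaves of $c_i$, for large $i$, to meet inside $P$, which is impossible since cusp leaves of a single cusp are disjoint. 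This covers your ``material $z$'' and ``ideal $c^*$'' sub-cases at once.

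The genuine gap is the sub-case $q = \bdy\ell$, which you plan to close using part~\refitm{Does} at $x$. This is circular: the paper proves accumulation at $x$ in part~\refitm{Does} \emph{as a consequence of} part~\refitm{DoesNot}---it first constructs cusps whose $\pi_m$-projections run to $\bdy m$, then invokes part~\refitm{DoesNot} to push their $\pi^\ell$-projections to $x$. Your alternative direct construction for accumulation at $x$ (embed a small rectangle $R$ with $x$ at the southwest corner in a tetrahedron rectangle $T$ via axiom~\refitm{Tet} and take a cusp of $T$) does not ensure the cusp's $\pi_m$-projection lands inside $R$'s south side, since $T$ may be far larger than $R$ and none of its four cusps need project into so small a window. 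So the circle stands. The paper cuts it without invoking part~\refitm{Does} at all: when $(r_i)$ decreases to $r_\infty$ and $(s_i)$ increases, it truncates each $R_i$ along $\ell_\infty$, the leaf of $F^\calL$ through $r_\infty$; the increasing union $R'_\infty$ of the truncations is again a rectangle, and axiom~\refitm{Tet} gives an enclosing tetrahedron rectangle $Q$ whose north side caps the $s_i$. This reduces the unbounded case to the bounded one with the roles of $m$ and $\ell$ swapped. That compactness step is what your plan is missing.
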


\begin{remark}
\label{Rem:Others}
Loom spaces associated to pseudo-Anosov homeomorphisms have various natural non-complete euclidean metrics.  
Each such metric has a definite injectivity radius;
the astroid lemma is immediate in these cases.
See~\cite[Lemma~14]{DelecroixUlcigrai15},~\cite[Figure~12]{Gueritaud16}, and~\cite[Figure~12]{Landry19}.

Loom spaces associated to pseudo-Anosov flows (without perfect fits) on finite volume hyperbolic three-manifolds need not have a natural choice of metric. 
However, in this setting the action of the fundamental group still gives a local finiteness that can replace the lower bound on injectivity radius.
This, in slightly different language, is carried out in~\cite[Section~4]{LandryMinskyTaylor21}. 
See in particular their Figure~18.
\end{remark}

\begin{proof}[Proof of \reflem{Astroid}]
Breaking symmetry, suppose that $x$ is southwest of $\stair(x)$.
Suppose that, in contradiction to \refitm{DoesNot}, 
there is a sequence of distinct exterior cusps $c_i \in \ExtDel(\stair(x))$ 
so that $r_i = \pi_m(c_i)$ accumulates at $r_\infty$, an interior point of $m$ (possibly the axis cusp $c_m$).
Define $s_i = \pi^\ell(c_i)$.  
Let $R_i \subset \stair(x)$ be the rectangle with corners at $x$, $r_i$, $c_i$, and $s_i$. 
Since the $c_i$ are all distinct, by \reflem{Keane} the points $r_i$ and $s_i$ are also all distinct.

We orient $m$ and $\ell$ away from $x$. 
We pass to a subsequence of the $c_i$ to ensure that the sequence $(r_i)$ is strictly monotonic in $m$.
Note that the rectangles $R_i$ cannot nest; 
we deduce that the sequence $(s_i)$ is strictly monotonic in $\ell$. 
Likewise, exactly one of the sequences $(r_i)$ and $(s_i)$ is increasing while the other is decreasing.
See \reffig{Astroid}.

\begin{figure}[htbp]
\captionsetup[subfloat]{captionskip=15pt}
\subfloat[]{
\centering
\labellist
\small\hair 2pt
 \pinlabel  $x$ [tr] at 0 0
 \pinlabel  $\ell$ [b] at 7 590
 \pinlabel  $m$ [l] at 590 7
 \pinlabel  $c_1$ [bl] at 190 530
 \pinlabel  $r_1$ [t] at 190 0
 \pinlabel  $s_1$ [Br] at 5 530
 \pinlabel  $c_2$ [bl] at 280 420
 \pinlabel  $r_2$ [t] at 280 0
 \pinlabel  $s_2$ [Br] at 5 420
 \pinlabel  $c_3$ [bl] at 335 352
 \pinlabel  $r_3$ [t] at 335 0
 \pinlabel  $s_3$ [Br] at 5 352
 \pinlabel  $c_\infty$ [l] at 505 265
 \pinlabel  $r_\infty$ [t] at 518 0
 \pinlabel  $s_\infty$ [Br] at 5 265
 \pinlabel  $m_\infty$ [t] at 100 265
 \pinlabel  $P$ [t] at 445 245
\endlabellist
\includegraphics[width=0.4\textwidth]{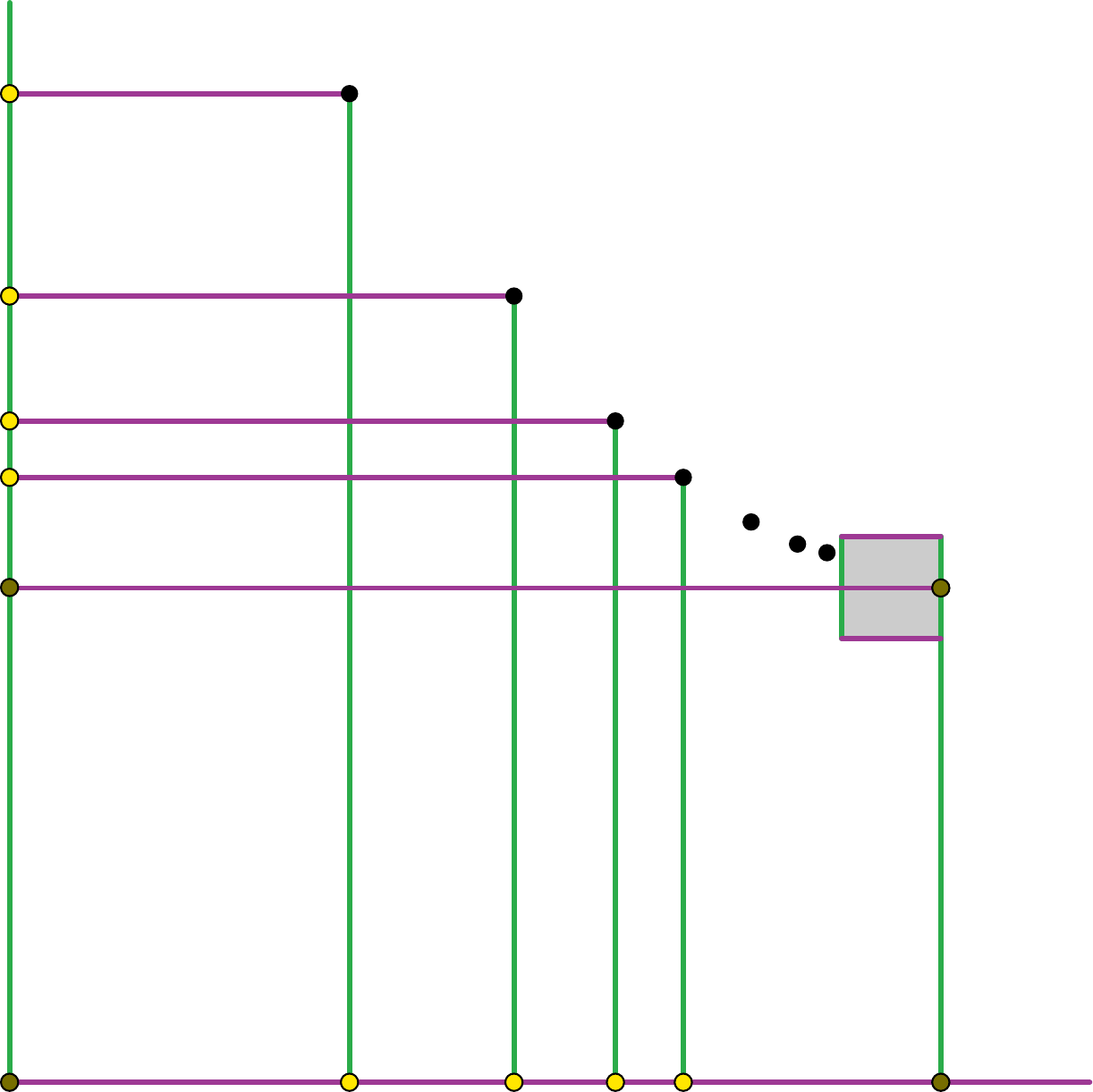}
\label{Fig:AstroidIncreasing}
}
\qquad
\subfloat[]{
\centering
\labellist
\small\hair 2pt
 \pinlabel  $x$ [tr] at 0 0
 \pinlabel  $\ell$ [b] at 7 590
 \pinlabel  $m$ [l] at 590 7
 \pinlabel  $\ell_\infty$ [b] at 190 583
 \pinlabel  $r_\infty$ [t] at 190 0
 \pinlabel  $c_4$ [bl] at 284 420
 \pinlabel  $r_4$ [t] at 284 0
 \pinlabel  $s_4$ [Br] at 5 420
 \pinlabel  $c_3$ [bl] at 327 353
 \pinlabel  $r_3$ [t] at 327 0
 \pinlabel  $s_3$ [Br] at 5 353
 \pinlabel  $c_2$ [bl] at 404 263
 \pinlabel  $r_2$ [t] at 404 0
 \pinlabel  $s_2$ [Br] at 5 263
 \pinlabel  $c_1$ [bl] at 515 177
 \pinlabel  $r_1$ [t] at 515 0
 \pinlabel  $s_1$ [Br] at 5 177
 \pinlabel  $R'_\infty$ at 100 505
\endlabellist
\includegraphics[width=0.4\textwidth]{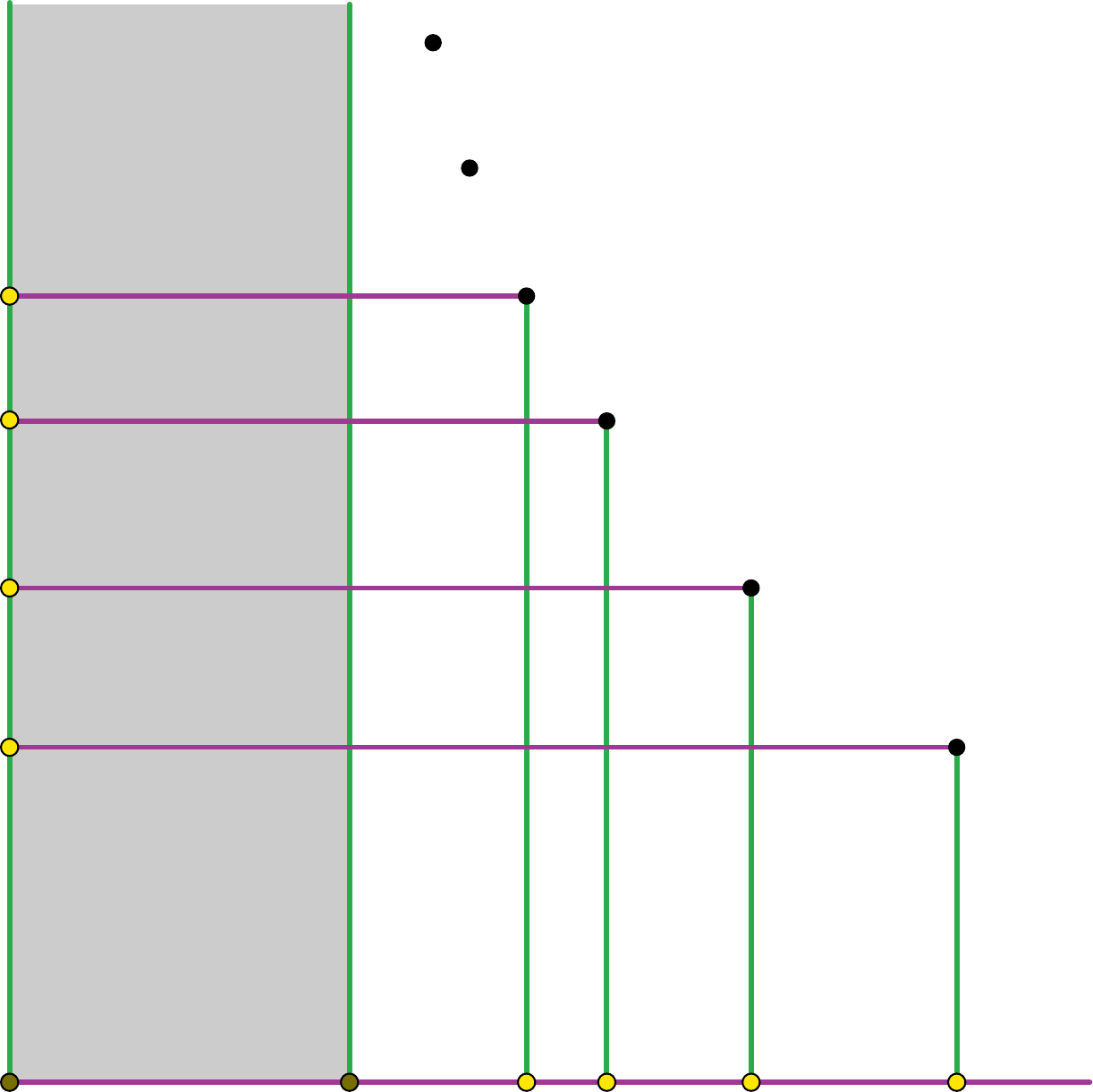}
\label{Fig:AstroidDecreasing}
}
\caption{The two possibilities as $r_i = \pi_m(c_i)$ is increasing or decreasing along $m$.}
\label{Fig:Astroid}
\end{figure}

We break the proof into two cases.

\begin{case*}
Suppose that $(r_i)$ is increasing in $m$.
\end{case*}

\noindent
Thus $(s_i)$ is decreasing along $\ell$.  
See \reffig{AstroidIncreasing}.
Since $m$ can be realised as an increasing union of southern sides of rectangles, by \reflem{BdyQ} we have a rectangle $Q$ in the staircase with corners at $x$ and $r_\infty$.  
Thus the points $s_i$ do not enter the interior of the western side of $Q$. 
That is, the sequence $(s_i)$ is bounded away from $x$ in $\ell$.
Thus there is some $s_\infty$ where they accumulate.  
Note that $s_\infty$ may be $c_\ell$, the axis cusp of $\ell$.   

Let $m_\infty$ be the ray of $F_\calL$ emanating from $s_\infty$ and entering $\stair(x)$. 
Recall that $R_i$ is the rectangle with opposite corners at $x$ and $c_i$.
Define $R'_i$ to be the component of $R_i - m_\infty$ with a corner at $x$.
Define $R'_\infty$ to be the increasing union of the $R'_i$. 
Thus $R'_\infty$ is a rectangle.  
Let $c_\infty$ be its northeastern corner.  

By \reflem{Corners} we have that $c_\infty$ is a point or a cusp of $\calL$.
Appealing to \reflem{Basis} or \refdef{Loom}\refitm{Cusp} respectively, there is a small rectangle $P$ so that
\begin{itemize}
\item
$c_\infty$ lies in the interior of the eastern side of $P$ and 
\item
the interior of $P$ intersects $m_\infty$.
\end{itemize}
However, the projections of $(c_i)$ accumulate on $s_\infty \in \ell$ and $r_\infty \in m$ respectively.
Thus the $c_i$ enter $P$, a contradiction.
Again, see \reffig{AstroidIncreasing}.

\begin{case*}
Suppose that $(r_i)$ is decreasing along $m$.
\end{case*}

\noindent
Thus $(s_i)$ is increasing along $\ell$.  
See \reffig{AstroidDecreasing}.
Let $\ell_\infty$ be the ray of $F^\calL$ emanating from $r_\infty$ and entering $\stair(x)$. 
Define $R'_i$ to be the component of $R_i - \ell_\infty$ with a corner at $x$. 
Define $R'_\infty$ to be the union of the $R'_i$. 
Again, $R'_\infty$ is a rectangle.
Note that the west side of $R'_\infty$ is contained in $\ell$. 
By \refdef{Loom}\refitm{Tet} there is a tetrahedron rectangle $Q$ containing $R'_\infty$.
So the west side of $R'_\infty$ is either contained in the interior of $Q$ or is contained in the west side of $Q$. 
In either case we obtain an upper bound for the $s_i$. 
We now apply the previous argument, swapping the roles of $m$ and $\ell$.  
This completes the proof of \refitm{DoesNot}.

To prove \refitm{Does} we must find a sequence of exterior cusps $c_i \in \ExtDel(\stair(x))$ whose projections $\pi_m(c_i)$ accumulate at $\bdy m$ and whose projections $\pi^\ell(c_i)$ accumulate at $x$. 
Let $(m_i)$ be an increasing sequence of open initial segments of $m$, whose union is $m$.   
By \reflem{Initial}, there is a rectangle $R_i$ with southwest corner at $x$ and whose southern side is $m_i$.  
Let $Q_i$ be the union of all rectangles $Q$ so that
\begin{itemize}
\item
$Q$ contains $R_i$ and
\item
the west sides of $Q$ and $R_i$ are identical.
\end{itemize}
Since $Q_i$ is a rectangle, by \refdef{Loom}\refitm{Tet} we have a tetrahedron rectangle, $R_i'$, containing $Q_i$.  
Thus there is a cusp $c_i$ contained in the east side of $R_i'$.
Note that $c_i$ is north of $m$, by the construction of $Q_i$.
So there is a rectangle in $\stair(x)$ with opposite corners at $x$ and $c_i$; 
thus $c_i$ is an exterior cusp of $\stair(x)$. 

By construction, the projection $\pi_m(c_i)$ is not contained in $m_i$. 
Since the $m_i$ exhaust $m$, the sequence of projections accumulates on $\bdy m$. 
It follows that the sequence of projections $\pi^\ell(c_i)$ is decreasing in $\ell$.
By \refitm{DoesNot}, the sequence $\pi^\ell(c_i)$ tends to $x$. 
This proves \refitm{Does} for $m$; 
the proof for $\ell$ is similar.
\end{proof}

We record a few consequences of the astroid lemma. 

\begin{corollary}
\label{Cor:CuspLeavesDense}
The cusp leaves are dense in $F^\calL$ and $F_\calL$. \qed
\end{corollary}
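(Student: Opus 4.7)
The plan is to deduce density of cusp leaves in $F^\calL$ directly from part~\refitm{Does} of the astroid lemma; density in $F_\calL$ then follows by a symmetric argument (swapping the roles of the two axis rays, or equivalently using $\pi^\ell$ in place of $\pi_m$). It is enough to show that every rectangle $R \subset \calL$ meets some cusp leaf of $F^\calL$, since rectangles form a basis for the topology of $\calL$.

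Given such an $R$, I would use \reflem{Corners} to choose a material corner $q$ of $R$; after relabelling orientations, assume $q$ is the southwest corner. Form the staircase $\stair(q, R)$ with lower and upper axis rays $m$ and $\ell$. By \reflem{Initial}, the south side of $R$ is an initial segment of $m$ starting at $q$. The astroid lemma \reflem{Astroid}\refitm{Does} then provides a sequence $(c_i) \subset \ExtDel(\stair(q, R))$ of exterior cusps with $\pi_m(c_i) \to q$ along $m$. For each $i$, let $R_i \subset \stair(q, R)$ be the rectangle with opposite corners at $q$ and $c_i$; then $\pi_m(c_i)$ is the southeast corner of $R_i$. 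Since the northeast corner $c_i$ is ideal, the east side of $R_i$ is a cusp side of $c_i$, so the leaf of $F^\calL$ containing that east side is the vertical cusp leaf of $c_i$, and it passes through $\pi_m(c_i)$. For $i$ sufficiently large, $\pi_m(c_i)$ lies in the south side of $R$; since $R$ is open, this cusp leaf enters $R$ through its south boundary, producing the desired cusp leaf of $F^\calL$ meeting $R$.

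The main obstacle is really just the bookkeeping: one must keep track of which corner of $R_i$ is which, identify $\pi_m(c_i)$ as a point lying on the vertical (rather than horizontal) cusp leaf of $c_i$, and confirm that the cusp leaf actually enters the open rectangle $R$ from the observation that $\pi_m(c_i)$ lies on $R$'s south boundary once $i$ is large. Once that is in hand, the symmetric argument using $\pi^\ell$ and a sequence with $\pi^\ell(c_i) \to q$ along $\ell$ delivers density of cusp leaves in $F_\calL$, completing the proof.
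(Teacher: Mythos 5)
Your proposal is correct and is the natural argument behind the paper's bare $\qed$: you reduce density to the claim that every rectangle meets a cusp leaf of each foliation, pick a material corner $q$ of the rectangle $R$ (Lemma~\ref{Lem:Corners} guarantees one exists), form the staircase $\stair(q,R)$, and use \reflem{Astroid}\refitm{Does} to find exterior cusps whose $\pi_m$-projections land in the south side of $R$; the east (cusp) side of the associated rectangle $R_i$ lies in a cusp leaf of $F^\calL$ through $\pi_m(c_i)$, which therefore crosses into the open rectangle $R$. The symmetric argument with $\pi^\ell$ handles $F_\calL$, so the approach matches the paper's intent.

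One small point worth tightening: you should note explicitly that $\pi_m(c_i)$ is a material point of $\calL$ rather than a cusp. This is built into the definition of $\pi_m$ (its image lies in $m \subset \calL$), and can also be seen directly: if the southeast corner of $R_i$ were ideal, the east side of $R_i$ would be contained in a leaf carrying cusp sides toward two distinct cusps, contradicting \reflem{Astroid}'s companion \reflem{AtMostOne}. With that observed, the passage from ``$\pi_m(c_i)$ lies in the interior of the south side of $R$'' to ``$\ell_i$ enters $R$'' is the standard product-structure/separation argument for rectangles, as you sketch.
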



See also~\cite[Lemma~4.2(1)]{LandryMinskyTaylor21}.

\begin{corollary}
\label{Cor:EdgesOnStair}
Suppose that $c$ and $d$ are exterior cusps for $\stair(x)$.  
Suppose their projections to $\ell^x$ (or $m_x$) are consecutive.
(That is, not separated by the image of any other exterior cusp.)
Then there is an edge rectangle $R \subset \stair(x)$ having $c$ and $d$ as opposite corners. \qed 
\end{corollary}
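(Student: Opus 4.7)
The plan is to construct the edge rectangle by hand, in the manner of Gu\'eritaud's staircase analysis. Break symmetry by assuming $x$ is at the SW corner of the defining rectangle for $\stair(x)$, with $m$ directed east and $\ell$ directed north. Assume $\pi^\ell(c)$ precedes $\pi^\ell(d)$ along $\ell$; the monotonicity argument from the proof of \reflem{Astroid} then forces $\pi_m(c)$ to succeed $\pi_m(d)$ along $m$. Let $R_c, R_d \subset \stair(x)$ be rectangles realising $c$ and $d$ as the NE corners opposite to $x$. The north side of $R_c$ lies on a horizontal cusp leaf of $c$ (going west from $c$), and the east side of $R_d$ lies on a vertical cusp leaf of $d$ (going south from $d$); these two transverse leaves meet at a single material point $y \in \stair(x)$ by \refrem{Leaves}.

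The candidate edge rectangle $E$ has $y$ as its SW corner, $c$ as its SE ideal corner, and $d$ as its NW ideal corner. The south and west sides of $E$ lie on the two cusp leaves just described. To obtain the east and north sides of $E$, apply \refdef{Loom}\refitm{Cusp} to suitable cusp rectangles for $c$ and $d$: this yields rectangles extending past $c$ and past $d$, from which one reads off a second vertical cusp leaf of $c$ (going north past $c$) and a second horizontal cusp leaf of $d$ (going east past $d$). The would-be NE corner of $E$ is then the intersection of these two new cusp leaves.

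It remains to verify that $E$ is a well-formed blue edge rectangle lying in $\stair(x)$. Three conditions must be checked: (a) the two new cusp leaves meet at a material NE corner of $E$; (b) the interior and sides of $E$ contain no cusps other than $c$ and $d$; (c) $E \subset \stair(x)$. Conditions (b) and (c) follow from the consecutive-projections hypothesis: any obstructing cusp, or any point of $E$ not covered by a rectangle from $x$, would produce an exterior cusp of $\stair(x)$ whose $\pi^\ell$-projection lies strictly between $\pi^\ell(c)$ and $\pi^\ell(d)$, contradicting consecutiveness. I expect (a) to be the main obstacle. The plan for (a) is to take a small rectangle $F$ with SW corner $y$ extending slightly into the presumed $E$ region, to embed $F$ in a tetrahedron rectangle $T_F$ via \refdef{Loom}\refitm{Tet}, and then to enlarge $F$ until $T_F$ extends past both $c$ and $d$. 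The consecutive-projections hypothesis forces the cusps on the east and north sides of $T_F$ to be exactly $c$ and $d$, whence the material NE corner of $T_F$ is precisely the required intersection.
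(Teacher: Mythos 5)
The paper does not write out a proof of this corollary (it is closed with an immediate $\qed$), so there is no paper argument to compare against; you are supplying the omitted details. Your overall construction is the right one: identify the material point $y$ where the relevant cusp leaves of $c$ and $d$ cross (it is, for instance, the material north-east corner of $R_c \cap R_d$), set up the candidate blue edge rectangle $E$ with $c$ and $d$ as opposite ideal corners, and verify the three conditions you list. The invocation of monotonicity from \reflem{Astroid} is fine, and (b) and (c) are handled plausibly, though the assertion that an obstructing cusp is automatically an \emph{exterior} cusp of $\stair(x)$ (so that consecutiveness applies) is itself a small gap: you must actually exhibit a rectangle in $\stair(x)$ with $x$ and that cusp as opposite corners, which is not free.

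The genuine gap is in your plan for (a). ``Enlarge $F$ until $T_F$ extends past both $c$ and $d$'' is not a well-defined procedure: \refdef{Loom}\refitm{Tet} hands you \emph{some} tetrahedron rectangle containing $F$, with no control over which one, and the tetrahedron rectangles containing $F$ need not be comparable. More concretely, a $T_F$ containing the material point $y$ in its interior cannot have east side strictly east of $c$'s vertical cusp leaf, because the leaf $m_c$ through $y$ terminates at $c$; so $T_F$ either has $c$ on its east side or fails to reach $c$ at all, depending on choices you cannot make. And if $T_F$ has $y$ only on its boundary rather than its interior, $c$ may appear on the south side of $T_F$ rather than the east, changing the picture entirely. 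The claim that consecutiveness ``forces'' the side cusps of $T_F$ to be $c$ and $d$ therefore needs a case analysis that you have not carried out. A cleaner route: fix the south side of your candidate to be exactly the arc $(y,c)$ of $m_c$, take $R'$ maximal with this south side and south-west corner $y$, and then use \reflem{Corners} together with \reflem{AtMostOne} to eliminate the possibility that $R'$ has its second ideal corner at the north-east rather than at $d$; consecutiveness then rules out $R'$ stopping short of $d$ with two material northern corners.
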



\subsection{Finiteness and connectedness}

\begin{lemma}
\label{Lem:Finite}
For any rectangle $R$ there are only finitely many skeletal rectangles containing it.
\end{lemma}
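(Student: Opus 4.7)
The plan is to argue by contradiction, leveraging one crucial structural fact: any cusp of $\calL$ lying in the closure of a tetrahedron rectangle $T$ must be one of the four cusps of $T$. This is because the parametrisation $f_T \from (0,1)^2 \to T$ realises the foliations as a product on $T$, so the interior of $T$ will admit no cusp (by \refrem{Leaves} applied to leaves inside $T$).

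Suppose, for contradiction, that $R$ is contained in infinitely many distinct tetrahedron rectangles $T_1, T_2, \ldots$. Since a tetrahedron rectangle is determined by its four bounding leaves together with their cusps, pigeonhole lets me assume (after relabelling and passing to a subsequence) that the west sides $\lambda_i$ of $T_i$ are pairwise distinct leaves of $F^\calL$, carrying pairwise distinct west cusps $c_i$ (by \reflem{AtMostOne}). I then fix a tetrahedron rectangle $T_0 \supset R$ using \refdef{Loom}\refitm{Tet}. By the structural fact above, at most four $c_i$ lie in $\closure{T_0}$; after discarding these and a further application of pigeonhole, I assume all $c_i$ lie in one of three distinguished regions relative to $T_0$: strictly north, strictly south, or (in the case $\lambda_i$ lies strictly west of $T_0$'s western side) strictly west. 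In each case I choose a corner $x$ of $R$ so that the $c_i$ all lie in the quadrant opposite $x$, and construct a rectangle $Q_i \subset \stair(x, R)$ with $x$ and $c_i$ as opposite corners---built from the two cusp leaves of $c_i$ and the two leaves through $x$. This realises each $c_i$ as an exterior cusp of the staircase $\stair(x, R)$, so the astroid lemma \reflem{Astroid}\refitm{DoesNot} forces the projections $\pi_m(c_i)$ on the axis ray $m(x, R)$ to be discrete in its interior. The positioning of the $\lambda_i$ strictly west of $R$'s west side bounds the projections away from the endpoint at $x$.

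The hard part will be ruling out accumulation of the projections at the far endpoint $\bdy m$: a priori the $\lambda_i$ could extend arbitrarily far west. When $c_i$ lies strictly north or south of $T_0$, the projection $\pi_m(c_i)$ lies in the bounded arc of $m$ strictly between $T_0$'s west side and $R$'s west side, hence in a compact subset of $m$'s interior; then astroid discreteness yields finiteness and the contradiction. When $\lambda_i$ lies strictly west of $T_0$'s western side, a parallel argument is needed, applying the astroid lemma simultaneously on both axis rays $m$ and $\ell$ of $\stair(x, R)$ to pin down both coordinates of $c_i$, or iterating the argument with a fresh tetrahedron rectangle further west. A secondary subtlety is verifying that each $Q_i$ really is a rectangle in $\calL$, which will use the foliation structure near $c_i$ and the existence, from the axioms, of rectangles with a prescribed opposite corner pair consisting of a material point (or ideal corner) and a cusp.
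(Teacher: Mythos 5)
Your proof takes a genuinely different route from the paper's. The paper argues directly: at each of the four corners $x$ of $R$, it uses both parts of the astroid lemma on the staircase $\stair(x, R)$ (part \refitm{Does} to locate anchor cusps $c$, $d$ with $\pi_m(c) \in m_R$ and $\pi^\ell(d) \in \ell^R$, then part \refitm{DoesNot} to conclude that only finitely many exterior cusps have $\pi_m$ between them), producing a single finite pool of candidate cusps; since a tetrahedron rectangle is determined by the four cusps on its sides, finiteness follows immediately. You instead argue by contradiction, pigeonhole to produce infinitely many distinct west cusps $c_i$, anchor against one fixed $T_0$ rather than against the astroid itself, and use only part \refitm{DoesNot}. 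The ``structural fact'' you state (that $\Delta(T)$ consists exactly of the four side-cusps of a tetrahedron rectangle $T$), the pigeonhole to distinct $\lambda_i$, and the trichotomy relative to $T_0$ are all sound, and your north/south cases go through as you describe.

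However, the ``strictly west'' case contains a genuine gap, which you flag but do not close. When $\lambda_i$ lies strictly west of $T_0$'s west side, neither coordinate of $c_i$ is automatically constrained to a compact subinterval of the relevant axis ray, so discreteness in the interior is not enough: the projections could in principle escape to $\bdy m$. Merely invoking ``both axis rays simultaneously'' is not a complete argument, because you still need \emph{some} a priori bound on at least one of the two projections. The missing observation is that the west cusp $c_W^{(0)}$ of the fixed tetrahedron rectangle $T_0$ cannot lie in any $T_i$ (tetrahedron rectangles contain no cusps). Since every $T_i$ contains $R$, and in the ``strictly west'' case $T_i$ straddles the leaf through $c_W^{(0)}$, this forces the height of $c_W^{(0)}$ to lie outside $R$'s height range and caps the height range of every $T_i$; that caps $\pi^\ell(c_i)$, and only then does the inverse monotonicity built into the astroid (the fact that along the astroid $\pi_m$ and $\pi^\ell$ are strictly oppositely monotone) bound $\pi_m(c_i)$ away from $\bdy m$. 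Your alternative suggestion of iterating with a fresh tetrahedron rectangle further west does not obviously terminate. The paper's choice of anchors (one with small $\pi_m$, one with small $\pi^\ell$, at \emph{each} corner) packages exactly this two-sided control and is why the direct count avoids the case analysis entirely.
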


\begin{proof}
Suppose that $R$ is the given rectangle.
Let $x$ be the south-west corner of $R$. 
Let $\stair(x) = \stair(x, R)$ the resulting staircase; 
let $m$ and $\ell$ be its axis rays. 

\begin{figure}[htbp]
\centering
\labellist
\small\hair 2pt
 \pinlabel  $x$ [tr] at 0 0
 \pinlabel  $R$ at 95 90
 \pinlabel  $\ell$ [b] at 7 590
 \pinlabel  $m$ [l] at 590 7
 \pinlabel  $c$ [b] at 150 555
 \pinlabel  $d$ [l] at 552 145
\endlabellist
\includegraphics[width=0.4\textwidth]{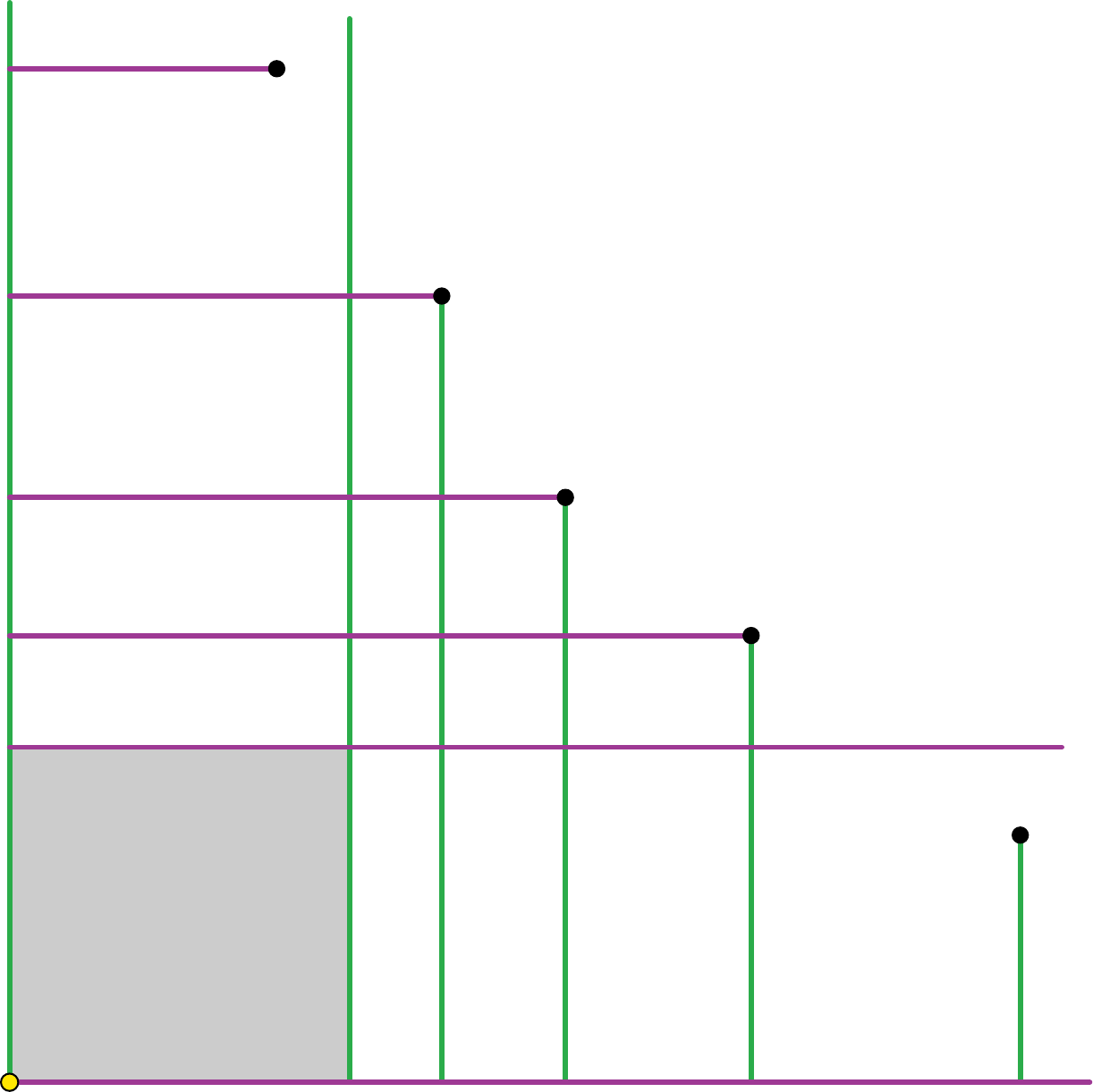}
\caption{The staircase $\stair(x, R)$.}
\label{Fig:MaximiseRectangle}
\end{figure}

Let $m_R \subset m$ be the projection of $R$ to $m$, along $F^\calL$. 
Similarly, let $\ell^R \subset \ell$ be the projection of $R$ to $\ell$, along $F_\calL$.
See \reffig{MaximiseRectangle}.
By \reflem{Astroid}\refitm{Does}, there are exterior cusps $c$ and $d$ in $\ExtDel(\stair(x))$ so that $\pi_m(c)$ lies in $m_R$ and $\pi^\ell(d)$ lies in $\ell^R$. 
By \reflem{Astroid}\refitm{DoesNot}, there are only finitely many cusps $c' \in \ExtDel(\stair(x))$ so that $\pi_m(c')$ lies between $\pi_m(c)$ and $\pi_m(d)$.
Furthermore, we may replace $x$ with any other corner of $R$ and perform the same analysis in the corresponding staircase. 
This determines a finite collection of cusps.
By \reflem{TwoCuspsAtMostOneEdge}, an edge rectangle is determined by its cusps.
A skeletal rectangle is determined by the edge rectangles it contains. 
This gives the desired bound. 
\end{proof}

\begin{corollary}
\label{Cor:EdgeFace}
Any edge rectangle is contained in only finitely many face rectangles.
\end{corollary}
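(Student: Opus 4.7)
The plan is to reduce the statement to \reflem{Finite} by sandwiching a face rectangle containing the given edge rectangle inside a tetrahedron rectangle. Suppose $E$ is the given edge rectangle. I want to bound the number of face rectangles $F$ with $E \subset F$.

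First I would observe that, by \refdef{Loom}\refitm{Tet}, any such face rectangle $F$ is contained in some tetrahedron rectangle $T$; necessarily $E \subset F \subset T$, so $T$ is a tetrahedron rectangle containing $E$. (In fact, by \reflem{FaceTet}, each face rectangle is contained in exactly two tetrahedron rectangles, but we only need the existence of one.) Applying \reflem{Finite} to $E$, there are only finitely many tetrahedron rectangles $T$ containing $E$. By \reflem{TetFaceEdge}, each such $T$ contains exactly four face rectangles. Hence the collection of face rectangles containing $E$ is contained in a finite union of finite sets, and is therefore finite.

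There is essentially no obstacle here: the corollary is a formal consequence of \reflem{Finite} (which is the substantive finiteness statement, resting on the astroid lemma) combined with the combinatorial counts from \reflem{TetFaceEdge}. The only thing to check carefully is that every face rectangle containing $E$ does appear inside some tetrahedron rectangle containing $E$, which is immediate from $E \subset F \subset T$.
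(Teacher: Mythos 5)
Your proof is correct and is essentially the paper's own argument: the paper simply cites \reflem{Finite} and \reflem{TetFaceEdge}, and your write-up spells out exactly the intended chain of reasoning (each face rectangle containing $E$ lies in some tetrahedron rectangle containing $E$, of which there are finitely many, each containing exactly four face rectangles).
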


\begin{proof}
This follows from Lemmas \ref{Lem:Finite} and \ref{Lem:TetFaceEdge}.
\end{proof}

Before giving the next result we require two definitions. 

\begin{definition}
\label{Def:FaceAdjacent}
Suppose that $P$ and $Q$ are distinct tetrahedron rectangles. 
We say that $P$ and $Q$ are \emph{face-adjacent} if their intersection, $P \cap Q$, is a face rectangle. 

In general, we say that two tetrahedron rectangles $P$ and $Q$ are \emph{face-connected} if there is a finite sequence $(P = P_0, P_1, \ldots, P_n = Q)$ of tetrahedron rectangles where $P_i$ and $P_{i+1}$ are face-adjacent for all $i$. 
\end{definition}

\begin{wrapfigure}[11]{r}{0.42\textwidth}
\vspace{-8pt}
\centering
\labellist
\small\hair 2pt
\pinlabel  $R$ at 215 190
\endlabellist
\includegraphics[width=0.37\textwidth]{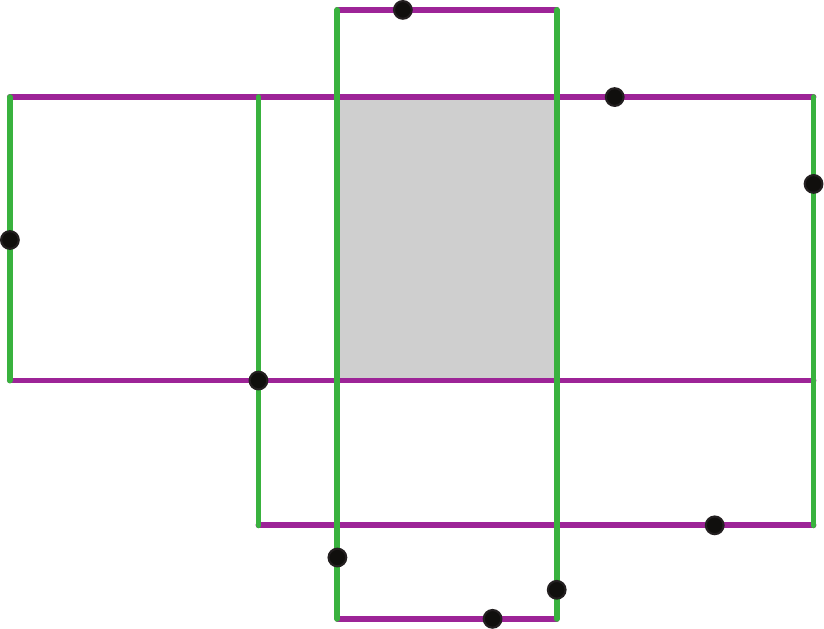}
\caption{A possible picture for \reflem{Ascend}.}
\label{Fig:Ascend}
\end{wrapfigure}

Note that every tetrahedron rectangle is face-connected to itself.

\begin{definition}
\label{Def:Spans}
Suppose that $P$ and $Q$ are rectangles of $\calL$.  
We say that $P$ \emph{west-east spans} $Q$ if there is a leaf of the induced foliation $F_Q$
that is contained in $P$.
We say that $P$ \emph{properly} west-east spans $Q$ if, additionally, $P - Q$ has two components. 
We define \emph{south-north spans} and \emph{properly south-north spans} similarly.
Finally, we say that $P$ \emph{(properly) spans} $Q$ if either $P$ (properly) west-east spans $Q$, or $P$ (properly) south-north spans $Q$.
\end{definition}

\begin{remark}
\label{Rem:Spans}
Note that the definition of west-east spans is independent of any choices of orientation made as in \refrem{Cardinal}.
\end{remark}

\begin{lemma}
\label{Lem:Ascend}
Suppose that $P$ and $Q$ are tetrahedron rectangles. 
Suppose that $P$ west-east spans $Q$.  
Then there is a sequence of tetrahedron rectangles $(P = P_0, P_1, \ldots, P_n = Q)$ 
so that 
\begin{itemize}
\item
$P_i$ is face-adjacent to $P_{i+1}$ and
\item
$P_i$ west-east spans $P_{i+1}$.
\end{itemize}
\end{lemma}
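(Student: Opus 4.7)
The plan is to proceed by induction on a combinatorial complexity measure counting the number of intermediate tetrahedron rectangles between $P$ and $Q$. After fixing orientations on $F^\calL$ and $F_\calL$, the spanning hypothesis becomes: the leaf of $F^\calL$ containing $P$'s west side lies weakly west of the corresponding leaf for $Q$, symmetrically for east sides, and the vertical ranges of $P$ and $Q$ overlap.

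The key local observation concerns face adjacency. Each tetrahedron rectangle $T$ has four face-adjacent tetrahedron rectangles (one per face, by \reflem{FaceTet}); exactly two of these $T'$ satisfy $T$ west-east spans $T'$, namely those for which $T$'s extension across the shared face is horizontal rather than vertical. In such a forward face move, the new leaf bounding $T'$'s west (resp.\ east) side coincides with the vertical cusp leaf through one of $T$'s south or north cusps, so $T'$'s horizontal strip is strictly narrower than $T$'s. The two remaining face adjacencies instead have $T'$ west-east spanning $T$, and we will not use them in the forward chain.

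The inductive step runs as follows. If $P_i = Q$, stop; otherwise, identify a cusp on the south or north side of $P_i$ whose associated vertical cusp leaf lies weakly outside the horizontal strip bounded by the vertical leaves of $Q$'s west and east sides, and perform the corresponding forward face move. The resulting face-adjacent $P_{i+1}$ satisfies both $P_i$ west-east spans $P_{i+1}$ and $P_{i+1}$ west-east spans $Q$. Existence of such a cusp when $P_i \ne Q$ is the technical heart of the argument: it combines \reflem{AtMostOne} with the Keane condition (axiom \refitm{Keane} of \refdef{Loom}), together with a careful case analysis on the relative positions of $P_i$'s four cusps and $Q$'s vertical bounding leaves; the remaining configurations turn out to force coincidences among the bounding cusp leaves of $P_i$ and $Q$ and hence $P_i = Q$.

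Termination follows from the astroid lemma. Define $c(P_i, Q)$ to be the number of tetrahedron rectangles $T$ with $P_i$ west-east spans $T$ and $T$ west-east spans $Q$. A staircase argument analogous to the proof of \reflem{Finite}, applying \reflem{Astroid}, shows that $c(P, Q)$ is finite. Each forward move strictly narrows the horizontal strip, eliminating at least one intermediate rectangle, so $c$ strictly decreases along the chain. The main obstacle I expect is verifying the cusp-existence step and in particular handling the subcase where $P_i$ and $Q$ share a vertical cusp leaf; \reflem{AtMostOne} is the key tool for ruling out the pathological cases and for identifying when a forward move pulls the west (or east) leaf of $P_{i+1}$ exactly onto the corresponding leaf of $Q$.
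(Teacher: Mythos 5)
Your proposal is correct and takes essentially the same approach as the paper: both induct on a finite count of intermediate tetrahedron rectangles (your $c(P_i,Q)$ agrees numerically with the paper's $|\tetra(P_i\cap Q)|$, and both are bounded via \reflem{Finite}), decreasing it by stepping across a face rectangle of $P_i$ that south-north spans $P_i$ and contains $P_i\cap Q$. Your ``forward face move at a cusp whose vertical cusp leaf lies weakly outside $Q$'s strip'' is exactly this step, and the ``technical heart'' you flag (existence of such a cusp when $P_i\neq Q$) is the same implicit existence claim the paper relies on without comment.
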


\begin{proof}
Let $R = P \cap Q$.
We define $\tetra(R)$ to be the set of tetrahedron rectangles that contain $R$.
By \reflem{Finite} the set $\tetra(R)$ is finite. 
We induct on the size of $\tetra(R)$.
In the base case $R = P = Q$, so $\tetra(R)$ has exactly one element and there is nothing to prove.

In general, let $F$ be any face rectangle of $P$ that south-north spans $P$ and contains $R$.
Applying \reflem{FaceTet} there is exactly one tetrahedron rectangle $P'$ that is, via $F$, face-adjacent to $P$.
Note that $P$ west-east spans $P'$ which in turn west-east spans $Q$.
See \reffig{Ascend}.
Here the widest rectangle is $P$, the tallest is $Q$, and the remaining tetrahedron rectangle is $P'$.
Set $R' = P' \cap Q$ and note that $R \subset R'$.
Thus $\tetra(R') \subset \tetra(R)$.
Furthermore, $P$ is an element of $\tetra(R)$ but is not an element of $\tetra(R')$.
The induction hypothesis now implies that $P'$ is face-connected to $Q$, using only the tetrahedra in $\tetra(R')$, completing the proof. 
\end{proof}

\begin{proposition}
\label{Prop:FaceConnected}
The set of tetrahedron rectangles of $\calL$ is face-connected.
\end{proposition}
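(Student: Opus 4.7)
The plan is to reduce \refprop{FaceConnected} to \reflem{Ascend} by interpolating through a ``large'' tetrahedron rectangle that west-east spans both given rectangles.

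First I will reduce to the case of two tetrahedron rectangles whose intersection is non-empty. Given tetrahedron rectangles $P$ and $Q$, choose $p \in P$ and $q \in Q$ and a continuous path $\gamma \from [0, 1] \to \calL$ from $p$ to $q$ (using $\calL \cong \RR^2$). For each $t$, the local product structure of $F^\calL$ and $F_\calL$ gives a small rectangle around $\gamma(t)$, which by \refdef{Loom}\refitm{Tet} sits inside some tetrahedron rectangle $T_t$. Compactness of $[0, 1]$ and connectedness of $\gamma$ then furnish a finite sequence of tetrahedron rectangles $P = T^{(0)}, T^{(1)}, \ldots, T^{(n)} = Q$ in which consecutive members meet in a non-empty open set.

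Next, given tetrahedron rectangles $P$ and $Q$ with $P \cap Q$ non-empty, I will construct an intermediate tetrahedron rectangle $T$ that west-east spans both. The open set $P \cap Q$ contains a rectangle $R$; choose a leaf $\ell$ of $F_\calL$ meeting $R$. Set $\ell_P = \ell \cap P$ and $\ell_Q = \ell \cap Q$: these are open arcs of $\ell$ whose intersection contains $\ell \cap R$, so their union has compact closure $\alpha \subset \ell$. A sufficiently thin tubular neighborhood of $\alpha$ in $F^\calL$ is a rectangle $T_0 \subset \calL$ containing $\ell_P \cup \ell_Q$. By \refdef{Loom}\refitm{Tet}, $T_0$ lies in some tetrahedron rectangle $T$. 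Since $\ell_P$ is a leaf of the induced foliation $F_P$ lying in $T$, the tetrahedron rectangle $T$ west-east spans $P$; the same argument shows $T$ west-east spans $Q$. Two applications of \reflem{Ascend} then yield chains of face-adjacent tetrahedron rectangles from $T$ to $P$ and from $T$ to $Q$, which concatenate to the desired chain from $P$ to $Q$.

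The main technical point requiring care is to verify that thickening the compact arc $\alpha$ genuinely produces a rectangle in the sense of \refdef{Rectangle}. Compactness of $\alpha$ lets me pick a single transverse radius uniformly along $\alpha$, so that the resulting $F^\calL$-tubular neighborhood admits a foliation-preserving homeomorphism from $(0, 1)^2$. Once this uniform-radius claim is in place, the rest of the argument (the spanning checks and the invocations of \reflem{Ascend}) is routine.
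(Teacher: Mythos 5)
Your proof follows the same overall strategy as the paper's: reduce, via covering a connecting arc by tetrahedron rectangles, to the case $P \cap Q \neq \emptyset$, then manufacture a single intermediate tetrahedron rectangle and apply \reflem{Ascend} twice. The two arguments differ in how that intermediary is built and in the direction of the spanning relation. The paper takes $R'$ to be the \emph{maximal} rectangle containing $R = P \cap Q$ whose west and east sides lie in the same leaves as those of $R$; maximality plus \refdef{Loom}\refitm{Tet} then forces $R'$ to be a tetrahedron rectangle, and both $P$ and $Q$ west-east span $R'$. You instead thicken a leaf arc $\alpha = \closure{\ell_P \cup \ell_Q}$ running all the way across both $P$ and $Q$, obtaining a tetrahedron rectangle $T$ that west-east spans $P$ and $Q$. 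Since \reflem{Ascend} produces a face-path between any two rectangles one of which spans the other, either direction of spanning works, and the two constructions are essentially dual.

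Two points in your version need tightening. First, for $\alpha$ to have compact closure you must choose $\ell$ to avoid the (at most four) $F_\calL$-cusp-leaves associated to the cusps on the west and east sides of $P$ and of $Q$; otherwise $\ell \cap P$ or $\ell \cap Q$ accumulates on an ideal point and its closure in $\calL$ is a half-open, non-compact arc. Since $R$ is open this is a generic choice, but it should be stated. Second, ``a single transverse radius uniformly along $\alpha$'' is a metric statement, and $\calL$ carries no metric --- the foliations are purely topological. The correct argument is combinatorial: cover $\alpha$ by finitely many rectangles, order them along $\ell$, and patch consecutive pairs (using that each overlap is itself a rectangle meeting $\ell$) into a single rectangle. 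This is exactly the kind of compactness-plus-product-structure argument the paper uses without elaboration in the proofs of \reflem{FaceTet}, \reflem{AtMostOne}, and \reflem{Initial}, so once reworded your proof sits at the same level of rigor as theirs; but the paper's maximal-rectangle construction of $R'$ sidesteps both issues, which is why it is the cleaner route.
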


\begin{proof}
Suppose that $P$ and $Q$ are tetrahedron rectangles.  
Choose an arc $\gamma \subset \calL$ connecting a point of $P$ to a point of $Q$.  
Recall that by \reflem{Basis} the open rectangles give a basis for the topology.
Also, $\gamma$ is compact. 
Thus $\gamma$ admits a finite covering by rectangles.
By \refdef{Loom}\refitm{Tet} the arc $\gamma$ is covered by a finite collection of tetrahedron rectangles. 

Thus we are reduced to the case where $P$ and $Q$ intersect. 
Let $R = P \cap Q$.  
Let $R'$ be the rectangle so that
\begin{itemize}
\item
$R'$ contains $R$, 
\item
the west and east sides of $R'$ contain, respectively, the west and east sides of $R$, and
\item
$R'$ is maximal with respect to the above two properties. 
\end{itemize}
From \refdef{Loom}\refitm{Tet} we deduce that $R'$ is a tetrahedron rectangle. 
From the construction we deduce that both $P$ and $Q$ west-east span $R'$ (and perhaps one or both equal $R'$). 
The proposition now follows from two applications of \reflem{Ascend}.
\end{proof}

We deduce the following. 

\begin{corollary}
\label{Cor:Countable}
There are countably infinitely many tetrahedron rectangles.  
Thus the same holds for face rectangles, edge rectangles, cusps, and cusp leaves. \qed
\end{corollary}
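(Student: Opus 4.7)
The plan is to handle countability and infiniteness of tetrahedron rectangles separately, and then deduce the other counts by bookkeeping. For countability, I would form the face-adjacency graph whose vertices are tetrahedron rectangles and whose edges record face adjacency: by \reflem{TetFaceEdge} each tetrahedron rectangle contains exactly four face rectangles, and by \reflem{FaceTet} each face rectangle is contained in exactly two tetrahedron rectangles, so this graph is four-regular. By \refprop{FaceConnected} it is connected, and hence its vertex set is countable.

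For infiniteness I would use the astroid lemma. Fix any staircase $\stair(x)$ with axis rays $m$ and $\ell$. By \reflem{Astroid}\refitm{Does}, the projections $\pi_m(c)$ for $c \in \ExtDel(\stair(x))$ accumulate at both $\bdy m$ and $x$, so $\ExtDel(\stair(x))$ is infinite. \refcor{EdgesOnStair} then produces, for each pair of exterior cusps whose projections along one of the foliations are consecutive, an edge rectangle in $\stair(x)$ having those cusps as opposite corners. Distinct consecutive pairs yield distinct edge rectangles, since an edge rectangle is determined by its two ideal corners (via \refdef{Loom}\refitm{Keane} applied inside any containing tetrahedron rectangle, which exists by \refdef{Loom}\refitm{Tet}). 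So there are infinitely many edge rectangles, and since by \reflem{Finite} each lies in only finitely many tetrahedron rectangles, the set of tetrahedron rectangles is also infinite.

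The remaining counts now follow. Face rectangles come four per tetrahedron and two per face, giving a countably infinite total. Edge rectangles come six per tetrahedron and finitely many per edge by \reflem{Finite}, so are also countably infinite. For cusps: every cusp lies in $\Delta(P)$ for some tetrahedron rectangle $P$, obtained by applying \refdef{Loom}\refitm{Tet} to any cusp rectangle, and each tetrahedron rectangle has exactly four cusps in its closure (its missing points), so cusps are countable; infiniteness was already established. For cusp leaves: each cusp has only finitely many cusp leaves associated to it (one in each of $F^\calL$ and $F_\calL$, as one checks by tracing the equivalence-class definition of a cusp), and \reflem{AtMostOne} makes the downstairs map injective, so cusp leaves are in finite-to-one correspondence with cusps. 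The main delicacy is the infiniteness step, which requires combining the astroid lemma with \refcor{EdgesOnStair} and checking distinctness of the resulting edge rectangles; everything else is routine bookkeeping.
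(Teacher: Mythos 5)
Your overall strategy — countability from the face-adjacency graph being connected and $4$-regular, infiniteness from the astroid lemma, and then propagating the count to the other objects — is in the right spirit, and the paper (which simply marks the corollary \qed) clearly intends it to follow from \refprop{FaceConnected}, \reflem{Finite}, \reflem{TetFaceEdge} and \reflem{FaceTet} in just this way. That said, there are two genuine errors in the bookkeeping.

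The more serious one is the cusp-leaves step. You assert that each cusp has ``one cusp leaf in each of $F^\calL$ and $F_\calL$,'' so that the map from cusp leaves to cusps (well defined by \reflem{AtMostOne}) is finite-to-one. This is false. A single cusp rectangle has two cusp sides, but a cusp is an equivalence class of cusp rectangles, and as you pass through the chain of equivalent cusp rectangles you spiral around the cusp and pick up new cusp leaves. Indeed, \reflem{Sectors} later records that a cusp has \emph{countably many} sectors, linearly ordered by adjacency; between any two consecutive sectors there is a distinct cusp leaf, so a cusp may (and in the pseudo-Anosov examples does) have infinitely many cusp leaves. The conclusion you want is still true, but the argument needs to be routed differently: for instance, any cusp side of a cusp rectangle lies, by \refdef{Loom}\refitm{Cusp} and~\refitm{Tet}, in the interior of some tetrahedron rectangle $P$, and the leaf containing that cusp side then contains a side of one of the finitely many face rectangles of $P$. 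Since there are countably many face rectangles, each with finitely many sides, the cusp leaves are countable.

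The second issue is smaller but still a logical gap: to pass from ``infinitely many edge rectangles'' to ``infinitely many tetrahedron rectangles,'' you cite \reflem{Finite}, which says that each edge rectangle is contained in only finitely many tetrahedron rectangles. That is the wrong direction of finiteness — it would, for instance, be consistent with one tetrahedron rectangle containing all the edge rectangles. What you actually need is \reflem{TetFaceEdge}: each tetrahedron rectangle contains exactly six edge rectangles, so finitely many tetrahedron rectangles could only account for finitely many edge rectangles.
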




This implies that there are only countably many cusp leaves.  
We deduce the following. 

\begin{corollary}
\label{Cor:NonCuspLeavesDense}
The non-cusp leaves are dense in $F^\calL$ and $F_\calL$. \qed
\end{corollary}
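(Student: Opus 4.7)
The plan is to deduce this corollary by a straightforward cardinality argument, leveraging \refcor{Countable} (countably many cusp leaves) and the fact that rectangles furnish a local product structure for the two foliations.

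First I would reduce the problem to a local statement. Since rectangles form a basis for the topology of $\calL$, it suffices to show that for every rectangle $R$ with parametrisation $f_R \from (0,1)^2 \to R$, the set of $x \in (0,1)$ such that the $F^\calL$--leaf through $f_R(x, 1/2)$ is \emph{not} a cusp leaf is dense in $(0,1)$ (and the analogous statement for $F_\calL$ leaves with the $y$--coordinate). Density of non-cusp leaves in $F^\calL$ then follows immediately, since any open set in $\calL$ contains a rectangle.

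Next, by \refrem{Leaves}, any leaf of $F^\calL$ meets $R$ in at most one vertical arc of the product structure, and therefore corresponds to at most one value of $x \in (0,1)$. Applying \refcor{Countable}, there are only countably many cusp leaves of $F^\calL$ in the entire loom space $\calL$, so in particular only countably many values of $x \in (0,1)$ can parametrise cusp leaves within $R$. Since $(0,1)$ is uncountable, its complement with a countable subset is dense in $(0,1)$; this furnishes the required dense set of non-cusp leaves through $R$. The argument for $F_\calL$ is identical, using the horizontal coordinate of the rectangle parametrisation.

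There is no real obstacle here beyond carefully noting that the notion of density in $F^\calL$ is equivalent to density in the transverse direction of each rectangle, and that a leaf is determined locally in a rectangle by a single coordinate value. The substantive content all sits in \refcor{Countable}, whose proof rested on \refprop{FaceConnected} and the astroid lemma; once countability of cusp leaves is in hand, the present corollary is immediate.
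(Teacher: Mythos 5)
Your argument is correct and is exactly what the paper intends: the corollary follows from \refcor{Countable} (countably many cusp leaves) together with the observation that leaves within a rectangle are parametrised by an uncountable interval, and a countable set cannot be dense-complemented. The paper gives no explicit proof beyond the preceding sentence noting countability of cusp leaves, so your spelled-out version is the same argument with the local product structure made explicit.
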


\section{Locally veering triangulations}
\label{Sec:Veering}

In this section we review several combinatorial structures on triangulations of three-manifolds.
We introduce the notions of \emph{taut isomorphisms} and \emph{locally veering triangulations}.
We then follow Gu\'eritaud~\cite[Section~2]{Gueritaud16} to construct a locally veering triangulation from a loom space.
In \refprop{Functorial} we show that this construction is functorial.

\subsection{Definitions}
\label{Sec:Definitions}

A useful example for the first several definitions is the canonical triangulation of the figure-eight knot complement.  
See \reffig{FigEightTri}.

\begin{wrapfigure}[19]{r}{0.3\textwidth}
\vspace{-10pt}
\captionsetup[subfloat]{captionskip=15pt}
\centering
\subfloat[Cusped model tetrahedron.]{
\labellist
\small\hair 2pt
 \pinlabel {$v_0$} [tr] at 2 4
 \pinlabel {$v_1$} [tl] at 292 96
 \pinlabel {$v_2$} [bl] at 343 330
 \pinlabel {$v_3$} [br] at 35 352
\endlabellist
\centering
\includegraphics[width = 0.2\textwidth]{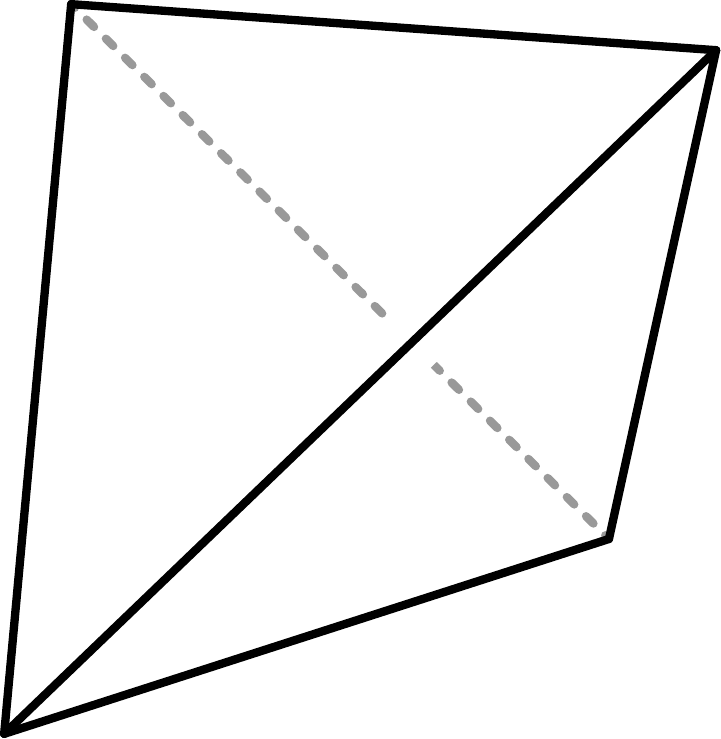}
\label{Fig:Model}
}

\subfloat[Face pairing.]{
\centering
\includegraphics[width = 0.25\textwidth]{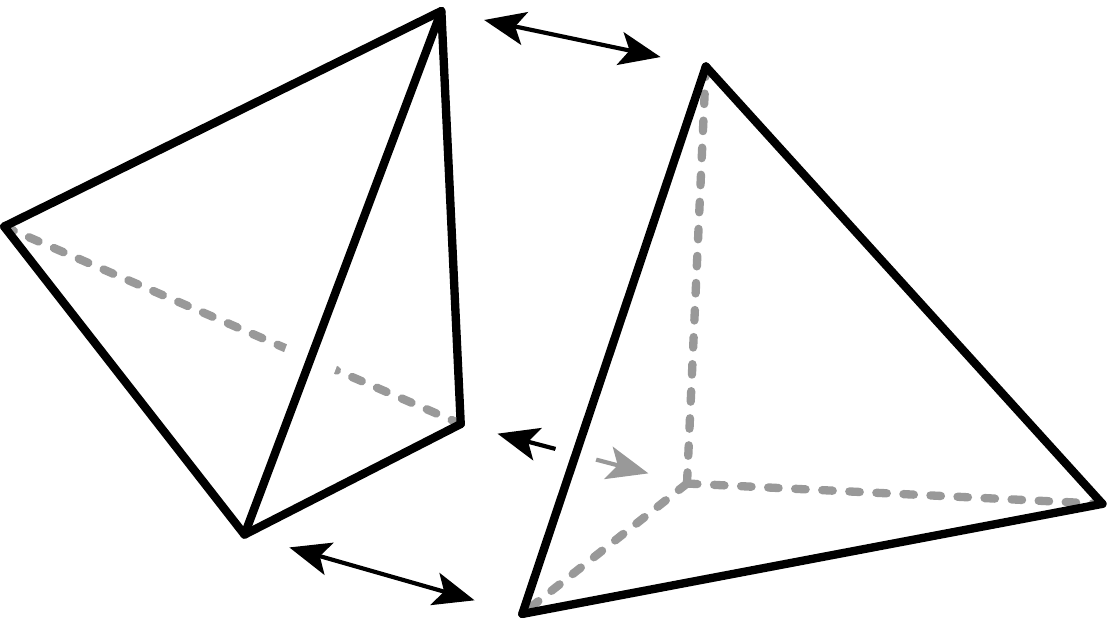}
\label{Fig:FacePairing}
}
\caption{}
\label{Fig:Models}
\end{wrapfigure}

Let
\[
t^3 = \left\{ x \in \RR^4 \st \mbox{$x_i \geq 0$ and $\sum x_i = 1$} \right\}
\]
be the \emph{standard} tetrahedron.
This is equipped with the subspace topology. 
Note that the vertices of $t^3$ are the standard unit vectors. 
Their usual ordering gives an orientation to $t^3$.

A copy $t$ of $t^3$ is called a \emph{model} tetrahedron. 
See \reffig{Model}. 
The facets (faces, edges, and vertices) of $t$ are called \emph{model facets}.
Note that $t$ also inherits an orientation. 

Suppose that $t$ and $t'$ are model tetrahedra (which may be equal).
Suppose that $f$ and $f'$ are faces of $t$ and $t'$, respectively.
Suppose that $\phi \from f \to f'$ is a homeomorphism induced by restricting an affine map.
We call $\phi$ a \emph{face pairing}.
See \reffig{FacePairing}.

Essentially following~\cite[Section~4.2]{Thurston78}, we define an \emph{ideal triangulation} $\calT = (\{t_\alpha\}, \{\phi_\beta\})$ to be a collection of model tetrahedra and a collection of face pairings. 
The \emph{realisation} of $\calT$, denoted $|\calT|$, is the topological space obtained as follows. 
\begin{itemize}
\item
Take the disjoint union of the model tetrahedra.
\item
Quotient by 
the face pairings. 
\item
Remove the zero-skeleton of the result.
\end{itemize}
The \emph{realisation} of a model facet of $\calT$ is its image in $|\calT|$.
The \emph{models} of a realised facet in $|\calT|$ are its preimages in $\calT$. 
In order to ensure that $|\calT|$ is a three-manifold we require the following. 
\begin{itemize}
\item
If $\phi$ is a face pairing then so is $\phi^{-1}$.
\item
Every model face occurs in exactly two face pairings.
\item
No face is paired with itself. 
\item
Every edge has only finitely many models. 
\item
The models of a single edge can be consistently oriented. 
\end{itemize}

A \emph{taut} structure on a model tetrahedron $t$ is an assignment of dihedral angle of either zero or $\pi$ to the model edges of $t$. 
The dihedral angles are required to satisfy the following. 
\begin{itemize}
\item
Suppose that $v$ is a model vertex of $t$. 
Suppose that $e, e', e''$ are the model edges of $t$ adjacent to $v$. 
Then the sum of their dihedral angles is $\pi$.
\end{itemize}
In a taut tetrahedron, the edges with dihedral angle zero are called \emph{equatorial} while the edges with dihedral angle $\pi$ are called \emph{diagonal}.  
See \reffig{VeeringTet}.

Following~\cite[Definition~1.1]{HRST11} (see also~\cite[page~370]{Lackenby00}) we say an ideal triangulation $\calT$ is \emph{taut} if all model tetrahedra are taut and we moreover have the following. 
\begin{itemize}
\item
Suppose that $e$ is an edge of $|\calT|$.
Then the dihedral angles of the models of $e$ sum to $2\pi$. 
\end{itemize}

\begin{definition}
\label{Def:TautIsom}
Suppose that $\calT$ and $\calS$ are taut ideal triangulations. 
If $f \from \calT \to \calS$ is an isomorphism of triangulations 
and sends the taut structure on $\calT$ to that on $\calS$, 
then we call $f$ a \emph{taut isomorphism}.
\end{definition}

Again, taut isomorphisms compose in the usual way. 
Thus taut triangulations, together with taut isomorphisms, form a category denoted $\Taut$.
We use $\Isom(\calT, \calS)$ to denote the set of taut isomorphisms from $\calT$ to $\calS$; 
we use $\Aut(\calT)$ to denote the group of taut automorphisms. 
It is an exercise to check that the taut automorphism group of the triangulation of the figure-eight knot complement, shown in \reffig{FigEightTri}, is isomorphic to the symmetries of the square.


An oriented taut tetrahedron $t$ is \emph{veering} if there is a bi-colouring (by red and blue) of the model edges as follows. 
\begin{itemize}
\item
Suppose that, at a model vertex $v$, we have adjacent model edges $e, e', e''$. 
Suppose that this is the anticlockwise ordering on the edges, as viewed from outside of $t$ and using the induced orientation on $\bdy t$.  Then if $e$ has dihedral angle $\pi$ we have that $e'$ is blue and $e''$ is red. 
\end{itemize}
See \reffig{VeeringTet}.

\begin{wrapfigure}[14]{r}{0.33\textwidth}
\centering
\labellist
\small\hair 2pt
 \pinlabel {$0$} [tr] at 69 38
 \pinlabel {$0$} [tl] at 165 54
 \pinlabel {$0$} [bl] at 138 154
 \pinlabel {$0$} [br] at 41 135
 \pinlabel \textcolor{mygray}{$\pi$} [t] at 53 79
 \pinlabel {$\pi$} [r] at 101 161
\endlabellist
\includegraphics[width=0.3\textwidth]{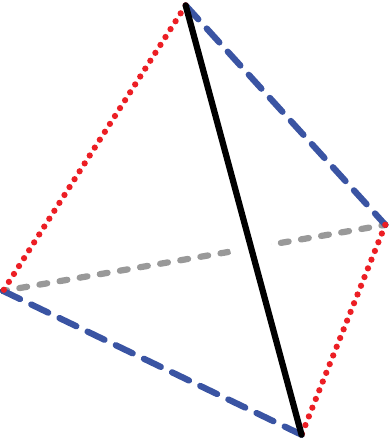}
\vspace{-5pt}
\caption{A veering tetrahedron. The $\pi$ angle edges may be either red or blue.}
\label{Fig:VeeringTet}
\end{wrapfigure}

Suppose now that we have fixed an orientation of a taut ideal triangulation $\calV$. 
Following~\cite[Definition~1.3]{HRST11} (see also~\cite[Definition~4.1]{Agol11}) we define a \emph{veering} structure on $\calV$ to be a colouring (by red and blue) of the edges of $|\calV|$ that pull back to give veering structures on all of the oriented model taut tetrahedra.
For an example, see \reffig{FigEightTri}.

We now turn to generalising veering triangulations to non-orientable manifolds. 
Suppose that $M$ is a three-manifold. 
Suppose that $\calT$ is a taut ideal triangulation of $M$.
Suppose that $e$ is an edge of $\calT$. 
Let $(e_i)$ be the collection of models of $e$.  
We order the $e_i$ cyclically, as we walk about $e$ in $M$. 
For each model edge $e_i$ let $t_i$ be a copy of the model tetrahedron containing $e_i$.
For each $i$ let $\phi_i$ be the face pairing, from $f_i \subset t_i$ to $g_i \subset t_{i+1}$, 
so that $\phi_i(e_i) = e_{i+1}$.
We define $\calT_e = (\{t_i\}, \{\phi_i\})$ to be the \emph{model edge neighbourhood} of $e$ in $\calT$. 
Note that $\calT_e$ inherits a taut structure from $\calT$.
Also, its realisation $|\calT_e|$ is a three-ball.

Choose an orientation on $|\calT_e|$.  
We say that $\calT$ is \emph{veering at $e$} if $\calT_e$ admits a veering colouring. 

\begin{definition}
\label{Def:LocallyVeering}
Suppose that $M$ is a three-manifold.  
Suppose that $\calT$ is a taut ideal triangulation of $M$. 
Then $\calT$ is \emph{locally veering} if $\calT$ is veering at every edge. 
\end{definition}

\begin{example}
\label{Exa:Gieseking}
The Gieseking manifold $M_G$ can be obtained as a punctured torus bundle; 
the monodromy is the matrix $G$ given in \refexa{AnosovMap}.
Thus $M_G$ admits a taut ideal triangulation $\calV$ with a single tetrahedron.  
The orientation double cover of $M_G$ is the figure-eight knot complement; 
the taut triangulation $\calV$ lifts to give the one shown in \reffig{FigEightTri}.
Thus $\calV$ is locally veering. 
\end{example}

Locally veering triangulations are ``almost'' veering, in the following sense. 

\begin{proposition}
\label{Prop:LocallyVeering}
Suppose that $M$ is a three-manifold.
Suppose that $\calV$ is a taut ideal triangulation of $M$.
Suppose that $(M, \calV)$ is locally veering.  
Then $(M, \calV)$ admits a veering structure if and only if $M$ is orientable. 
\end{proposition}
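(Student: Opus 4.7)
For the forward implication I would simply observe that the definition of a veering structure on $\calV$ presupposes a fixed orientation of $\calV$; hence the existence of such a structure forces $M = |\calV|$ to be orientable.

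For the converse, assume $M$ is orientable and fix an orientation. The compatibility conditions on face pairings ensure that each model tetrahedron inherits a coherent orientation from this choice. At every model vertex $v$ the anticlockwise ordering of the three incident edges (viewed from outside the tetrahedron) is then well-defined, and the taut condition singles out exactly one diagonal edge at $v$. The veering vertex rule thereby proposes colours for the two equatorial edges at $v$.

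I would construct the global colouring as follows. For each edge $\epsilon$ of $|\calV|$, invoke local veering at $\epsilon$ to obtain a veering colouring of $\calT_\epsilon$ compatible with the orientation induced from $M$ (since the two orientations on the ball $\calT_\epsilon$ yield veering colourings differing by swapping red and blue, one of them matches the induced orientation). Read off the colour of $\epsilon$ from this colouring. Well-definedness reduces to showing that for any two edge neighbourhoods $\calT_\epsilon$ and $\calT_{\epsilon'}$ sharing a tetrahedron $t$, the colourings agree on the six edges of $t$. Once this is verified, each tetrahedron of $|\calV|$ automatically satisfies the veering condition in its induced orientation, because its colouring is inherited from a veering colouring of some $\calT_\epsilon$ containing it.

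The main obstacle is this consistency verification. With orientation fixed, the vertex rule pins down the colour of every equatorial edge at every vertex, so the two neighbourhoods agree immediately on the equatorial edges of $t$. The subtler point concerns the two diagonal edges of $t$: their colours are not directly constrained by the rule at either endpoint inside $t$. Here I would argue that each such edge is either equatorial in some adjacent tetrahedron (whose rule then forces its colour, and local veering at that adjacent edge guarantees agreement), or globally diagonal in every incident tetrahedron, in which case any consistent choice does; in both cases the colour is dictated by the local veering colouring of the relevant $\calT_\epsilon$. \refexa{Gieseking} confirms that the orientability hypothesis is genuinely necessary: the Gieseking manifold is locally veering but non-orientable, and traversing an orientation-reversing loop of tetrahedra interchanges red and blue, obstructing any global colouring.
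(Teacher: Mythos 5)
The paper states \refprop{LocallyVeering} with a \qed\ and supplies no argument, so there is nothing in the source to compare against; I assess your proposal on its own terms. Your overall strategy is the right one: the forward direction is immediate from the definition of a veering structure, and for the converse you fix an orientation on $M$, read off the colour of each edge $\epsilon$ from an orientation-compatible veering colouring of $\calT_\epsilon$ (using the correct observation that reversing the orientation of the ball just swaps red and blue), and then check that the resulting global colouring is veering.

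The one step that would fail as written is the claim that the veering colourings of $\calT_\epsilon$ and $\calT_{\epsilon'}$ must agree on all six edges of a shared tetrahedron $t$. A diagonal edge $d$ of $t$ that is diagonal in, or absent from, every other tetrahedron of $\calT_\epsilon$ has its colour unconstrained by the vertex rule anywhere in $\calT_\epsilon$; a veering colouring of $\calT_\epsilon$ may therefore colour $d$ either way, and similarly in $\calT_{\epsilon'}$, so the two chosen colourings can legitimately disagree on $d$. Fortunately this agreement is never needed: the vertex rule only constrains the colours of equatorial edges, so $t$ is veering exactly when its four equatorial edges carry the dictated colours, and the diagonal edges of $t$ may be coloured freely. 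For an equatorial edge $e$ the dictations are mutually consistent because every (tetrahedron, vertex) pair at which $e$ is equatorial lifts into $\calT_e$, where the orientation-compatible veering colouring satisfies all of them simultaneously; edges that are diagonal in every incident tetrahedron are necessarily of degree two by the taut angle sum and may be coloured arbitrarily. Replacing your ``agree on all six edges'' claim with this weaker, correct statement closes the argument, and your appeal to the Gieseking manifold to show that orientability cannot be dropped is apt.
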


\begin{proof}
Suppose that $(M, \calV)$ admits a veering structure.  
Then each model tetrahedron is veering, thus oriented. 
These orientations disagree across faces (because of the colours of the edges). 
Thus they define an orientation of $M$.

For the converse we fix an orientation of $M$.
This induces an orientation on all tetrahedra and thus on all model tetrahedra.
The taut structure picks four of the six model edges to be equatorial.
The orientation of the model tetrahedron and the axioms of a veering triangulation determines the colours of the equatorial model edges.
The hypothesis of local veering tells us that all equatorial models of a given edge have the same colour.
\end{proof}


We use $\Veer$ to denote the full subcategory of $\Taut$ consisting of locally veering triangulations.
We use $\Veer(\RR^3)$ to denote the further full subcategory of those triangulations whose realisation is homeomorphic to $\RR^3$.

\subsection{Building the triangulation}
\label{Sec:Construction}

We give a (metric-free) version of Gu\'eritaud's construction~\cite[Section~2]{Gueritaud16}.  
That is, for every loom space $\calL$ we give a locally veering triangulation $\calV = \veer(\calL)$.

\begin{definition}
\label{Def:InducedTriangulation}
Suppose that $\calL$ is a loom space. 
Its \emph{induced} triangulation $\veer(\calL)$ has \emph{model cells}
\[
\{ \cell(P) \st \mbox{$P$ is a skeletal rectangle of $\calL$} \}
\]
We identify the vertices of $\cell(P)$ with the cusps of $P$.
These are distinct by \reflem{OneEdgeTwoCusps}.
If $P$ and $Q$ are skeletal rectangles, with $P$ properly contained in $Q$ then the set of cusps $\Delta(P)$ is a proper subset of $\Delta(Q)$. 
For every such pair we take $\phi_{P, Q}$ to be the corresponding cell identification between $\cell(P)$ and the corresponding subsimplex of $\cell(Q)$. 
The realisation of $\veer(\calL)$ is the resulting quotient (minus the zero-skeleton). 
\end{definition}

\begin{definition}
\label{Def:InducedMap}
Suppose that $f \from \calL \to \calM$ is a loom isomorphism.  
We define the \emph{induced} map $\veer_f \from \veer(\calL) \to \veer(\calM)$ as follows. 
Suppose that $P \subset \calL$ is a skeletal rectangle. 
Let $\cell(P)$ be the corresponding cell of $\veer(\calL)$; 
so $\cell(P)$ is either an edge, face, or tetrahedron. 
We take $\veer_f(\cell(P)) = \cell(f(P))$. 
\end{definition}

We also use the notation $\veer(f)$ for $\veer_f$.

\subsection{Induced triangulations}
\label{Sec:InducedTriangulation}

We present a few properties of induced triangulations.

\begin{lemma}
\label{Lem:Manifold}
Suppose that $\calL$ is a loom space. 
Let $\calV = \veer(\calL)$ be its induced triangulation.
Then the realisation $|\calV|$ is a non-compact, connected three-manifold.
Furthermore $|\calV|$ is orientable.
\end{lemma}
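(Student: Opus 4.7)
The plan is to establish the four claims --- manifold structure, connectedness, non-compactness, and orientability --- in turn, using the combinatorial machinery of Sections~\ref{Sec:LoomSpaces} through~\ref{Sec:Astroid}.

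For the manifold property, I would verify local Euclidean structure at each point of $|\calV|$ by splitting into cases according to which skeleton stratum contains the point. Points in the interior of a model tetrahedron are immediate, and points in the interior of a face are handled by \reflem{FaceTet}, which says each face rectangle lies in exactly two tetrahedron rectangles, giving a single face pairing at each face of $|\calV|$. The main work is showing that the link of each edge $e$ in $|\calV|$ is a circle. By \refcor{EdgeFace} and \reflem{FaceTet} the corresponding edge rectangle $E$ is contained in only finitely many tetrahedron rectangles, and each such tetrahedron has exactly two of its four faces containing $E$; hence face-adjacency across faces containing $E$ defines a $2$-regular graph on the finite set of tetrahedra containing $e$. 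The hard part --- and the main obstacle --- is showing that this graph is a single cycle rather than a disjoint union of cycles. For this I would work inside $\calL$: the two cusps of $E$, together with the cusp leaves through them, organise the two ``free'' cusps of each enclosing tetrahedron rectangle into a canonical cyclic order around $E$, and I would verify that face-adjacency across a face containing $E$ moves one step in this cyclic order.

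Connectedness is immediate from \refprop{FaceConnected}: any two model tetrahedra are connected by a finite chain of face-adjacencies, and each face adjacency identifies two model tetrahedra along a face in $|\calV|$. For non-compactness, \refcor{Countable} provides countably infinitely many model tetrahedra. Choose one interior point $x_P$ from each $t_P$. From the manifold analysis (via \reflem{FaceTet} and \refcor{EdgeFace}) the triangulation $\calV$ is locally finite, so every point of $|\calV|$ has a neighbourhood meeting only finitely many model tetrahedra. Hence $(x_P)$ has no accumulation point in $|\calV|$, so $|\calV|$ cannot be compact.

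For orientability, I would fix orientations on $F^\calL$ and $F_\calL$ as in \refrem{Cardinal}. For each tetrahedron rectangle $P$ this orients $P$ and labels its four cusps by cardinal direction (south, east, north, west); together with the choice of diagonal pair of opposite edges distinguished by \refdef{Loom}\refitm{Keane}, this canonically orients the model tetrahedron $t_P$. The content is then to check that for each face rectangle $R = P \cap Q$, the orientations induced on the shared face from $t_P$ and from $t_Q$ are opposite. Since the three cusps of $R$ inherit the same cyclic order from the ambient orientation of $\calL$ regardless of whether we view them from $P$ or from $Q$, a single model computation verifies this coherence and hence orientability of $|\calV|$.
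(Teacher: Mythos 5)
Your outline matches the paper's, and you invoke the same key results (\reflem{FaceTet}, \refcor{EdgeFace}, \refprop{FaceConnected}, \refcor{Countable}) for face structure, connectedness, and non-compactness; your orientability computation is also the same as the paper's. The divergence is in the treatment of edge interiors, and there you have misidentified the obstacle. You flag ``the graph is a single cycle rather than a disjoint union of cycles'' as the hard part. But an edge of $|\calV|$ is, by the definition of realisation, an equivalence class of model edges under the face pairings; with \reflem{FaceTet} (each face pairs with exactly one other) and \refcor{EdgeFace} (finitely many models), that equivalence class is a finite, connected, $2$-regular multigraph, hence automatically a single cycle, and the link of each realised edge is a circle for free. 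The question you are actually raising --- whether the tetrahedron rectangles containing a fixed edge rectangle $E$ form a \emph{single} face-pairing orbit, equivalently whether two distinct realised edges of $|\calV|$ can share the same $R(e)$ --- is a legitimate one, but it is needed for \refcor{Taut}, not for the manifold claim.

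The genuine subtlety for \reflem{Manifold} is the fifth bullet in the paper's list of conditions for an ideal triangulation: the models of each edge must be \emph{consistently orientable}. If the monodromy of face pairings around an edge reverses it, the edge midpoint has link $\RP^2$ rather than $S^2$, and $|\calV|$ fails to be a manifold there. That is why the paper's proof says ``three-manifold away from the midpoints of edges'' before the orientation computation, and concludes manifoldness at the midpoints only afterwards: the fact that each $\phi_R$ reverses the orientation inherited from $\calL$ (\reffig{TetPairing}) makes the tetrahedra glue up orientably, so the monodromies are orientation-preserving and the models of every edge can be consistently oriented. You prove orientability by essentially the same computation but present it as a free-standing final step, never connecting it to the manifold-at-edges claim. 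As written, your manifold argument has a gap at the edge midpoints, and the piece that would fill it is sitting, unused for that purpose, in your final paragraph.
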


\begin{proof}
By \refcor{Countable} the triangulation $\veer(\calL)$ has infinitely many model tetrahedra. 
We deduce that $|\calV|$ is non-compact.
By \refprop{FaceConnected} we have that $|\calV|$ is connected.
By \reflem{FaceTet} we have that every face of $|\calV|$ meets exactly two tetrahedra of $|\calV|$. 
By \refcor{EdgeFace} we have that every edge of $|\calV|$ meets finitely many faces, and thus finitely many tetrahedra, of $|\calV|$. 
Since tetrahedron rectangles are embedded in $\calL$, 
no tetrahedron is glued to itself. 
Recall also that we removed the zero-skeleton from $|\calV|$. 
We deduce that $|\calV|$ is a non-compact, connected topological space 
which is a three-manifold away from the midpoints of edges. 

\begin{figure}[htbp]
\centering
\subfloat[The face rectangle $R$ is shaded.]{
\includegraphics[width=0.3\textwidth]{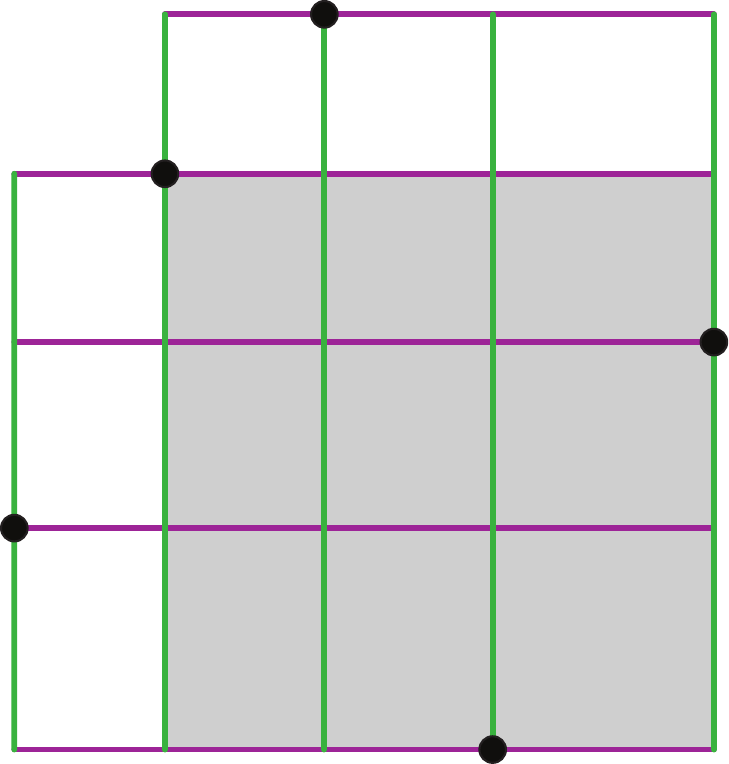}
\label{Fig:TetPairingLoom}
}
\qquad
\subfloat[The face $f_R$ is shaded.]{
\includegraphics[width=0.3\textwidth]{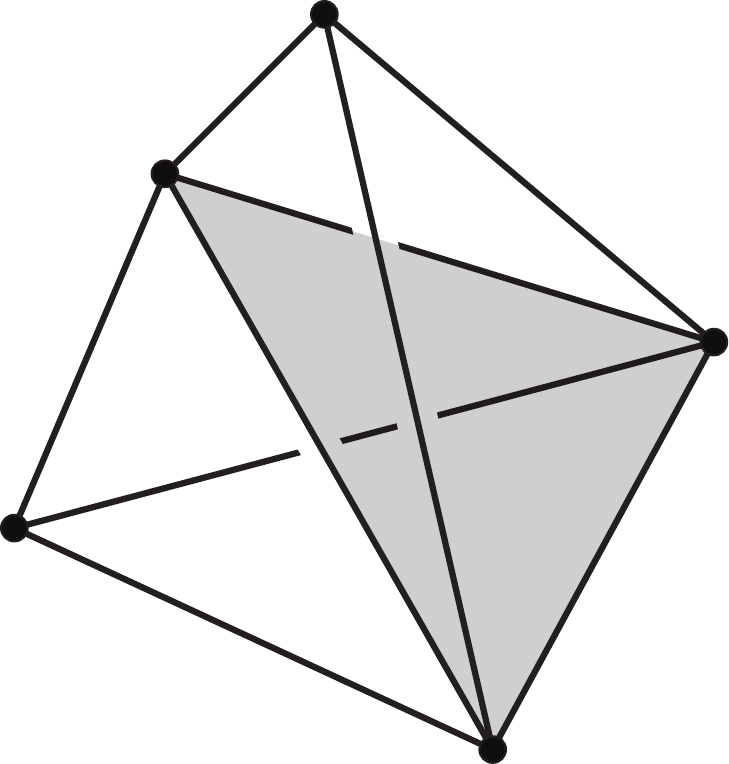}
\label{Fig:TetPairingTri}
}
\caption{The induced orientations on the shared face are opposite.}
\label{Fig:TetPairing}
\end{figure}

As in \refrem{Cardinal}, we fix orientations of $F^\calL$ and $F_\calL$.
Ordering $F^\calL$ before $F_\calL$, these orientations determine an orientation of $\calL$ as well as the cardinal directions south, east, north, and west.

Suppose that $P$ is a tetrahedron rectangle in $\calL$. 
We order the sides of $P$ according to their direction: first south, then east, north, and west. 
This induces an ordering on the model vertices of $\cell(P)$ and thus induces an orientation on $\cell(P)$.
Suppose that $Q$ is another tetrahedron rectangle which is face-adjacent to $P$. 
Suppose that $R = P \cap Q$ is the shared face rectangle.
We note that the composition $\phi_{R, P} \circ \phi_{R, Q}^{-1}$ reverses orientation.  
See \reffig{TetPairing}.
Thus the midpoints of edges are also manifold points of $|\calV|$;
also our choices of orientations above determine an orientation of $|\calV|$.
\end{proof}

Before discussing the induced taut structure on $\veer(\calL)$ we require a lemma. 


\begin{lemma}
\label{Lem:OneUpOneDown}
Suppose that $P$ is an edge rectangle of $\calL$. 
Then there are exactly two tetrahedron rectangles which contain $P$ and properly span $P$.
\end{lemma}

\begin{proof}
Let $a$ and $b$ be the cusps of $P$.
Let $m_a$, $\ell^a$, $m_b$, and $\ell^b$ be the sides of $P$, contained in the associated cusp leaves. 
Now let $R_P$ be the union of all rectangles that contain both $m_a$ and $m_b$.
Note that every rectangle in this union properly south-north spans $P$.
Appealing to \refdef{Loom}\refitm{Cusp} twice and \reflem{Basis}, the union $R_P$ is non-empty. 
See \reffig{EdgeTopBottom}.

Since $R_P$ can be realised as an increasing union of rectangles, it is a rectangle.
Furthermore, $R_P$ properly south-north spans $P$. 
Finally, since $R_P$ is maximal, it is a tetrahedron rectangle, by \refdef{Loom}\refitm{Tet}. 
The same construction, applied to $\ell^a$ and $\ell^b$, produces the tetrahedron rectangle $R^P$.

Suppose that $Q$ is a tetrahedron rectangle containing $P$, and either south-north or west-east spanning $P$.
Thus $Q$ is either contained in $R_P$ or in $R^P$, and we are done.
\end{proof}

\begin{wrapfigure}[10]{r}{0.36\textwidth}
\vspace{-5pt}
\centering
\labellist
\small\hair 2pt
 \pinlabel {$a$} [r] at 1 113
 \pinlabel {$b$} [l] at 347 202
\endlabellist
\vspace{-10pt}
\includegraphics[width=0.25\textwidth]{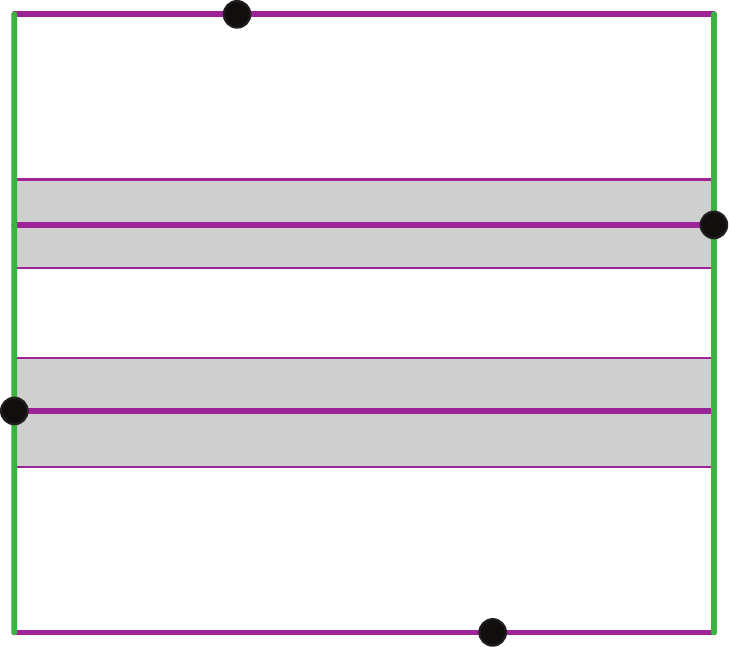}
\caption{The small rectangular neighbourhoods of $m_a$ and $m_b$ are shaded.}
\label{Fig:EdgeTopBottom}
\end{wrapfigure}

We now define the \emph{induced} dihedral angle assignment on $\calV = \veer(\calL)$. 
Suppose that $R$ is a tetrahedron rectangle in $\calL$; 
let $\cell(R)$ be the corresponding model tetrahedron. 
By \reflem{TetFaceEdge} there are six edge rectangles in $R$.
By \refdef{TetRect}, exactly two of these span $R$.
We give $\cell(R)$ a taut structure as follows.  
Suppose that $P$ is an edge rectangle contained in $R$. 
Thus $\phi_{P, R}$ gives a model edge $e$ of $\cell(R)$. 
We give $e$ dihedral angle $\pi$ or zero exactly as $P$ does or does not span $R$. 

From \reflem{OneUpOneDown} we deduce the following.

\begin{corollary}
\label{Cor:Taut}
The induced dihedral angle assignment on $\veer(\calL)$ is a taut structure. \qed
\end{corollary}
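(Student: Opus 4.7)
The plan is to verify directly the two conditions defining a taut structure: first, that at every model vertex of every model tetrahedron of $\veer(\calL)$, the three adjacent dihedral angles sum to $\pi$; second, that around every edge $e$ of $|\calV|$, the dihedral angles of the models of $e$ sum to $2\pi$.

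For the local condition, we would fix a tetrahedron rectangle $P$ and its model tetrahedron $t_P$. By \reflem{TetFaceEdge}, $P$ contains exactly six edge rectangles, one for each pair of cusps of $P$, corresponding to the six model edges of $t_P$. From the parametrisation in \refdef{TetRect} together with \refdef{Loom}\refitm{Keane}, exactly two of these edge rectangles span $P$: the one connecting the south and north cusps spans south-north, and the one connecting the west and east cusps spans west-east. These two $\pi$-edges are opposite in $t_P$ and together meet all four cusps, so every vertex of $t_P$ meets exactly one $\pi$-edge and two $0$-edges, yielding a local sum of $\pi$.

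For the global condition, fix an edge $e$ of $\veer(\calL)$ and set $E = R(e)$. Breaking symmetry, assume $E$ is red with cusps $a$ at its south-west and $b$ at its north-east corners. The models of $e$ are those tetrahedron rectangles $P$ for which $a$ and $b$ both appear as cusps on the sides of $P$. A short check in the rectangular parametrisation of $P$ shows that $a$ must lie on the south or west side of $P$, and $b$ on the north or east side, producing four cases. In the south-north case $E$ is the south-north diagonal of $P$, so $E$ spans $P$ and contributes dihedral $\pi$; by \reflem{OneUpOneDown} the unique such $P$ is $R^e$. In the west-east case $E$ is the west-east diagonal, again contributes $\pi$, and the unique such $P$ is $R_e$. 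In the two remaining cases $E$ meets two adjacent sides of $P$, is not spanning, and contributes $0$; there may be many tetrahedron rectangles in these cases but they do not affect the sum. Hence the total dihedral angle around $e$ is $\pi + \pi = 2\pi$.

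The main obstacle is the bookkeeping in the global condition: one must check that the four cases exhaust all models of $e$ and that the spanning behaviour of $E$ inside $P$ is determined by the case. Both follow from a direct coordinate argument in the parametrisation of \refdef{TetRect}, using that, for $E \subset P$, the positions of $a$ and $b$ in $P$ are severely constrained by the requirement that they be corners of a subrectangle. Given this, \reflem{OneUpOneDown} pins down the unique $\pi$-contributor in each spanning case, and the corollary follows.
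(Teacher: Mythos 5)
Your proposal is correct and follows essentially the same route as the paper, whose one-line proof simply invokes \reflem{OneUpOneDown}: the local taut condition is built into the definition of the induced dihedral angles (the two spanning edge rectangles are the two diagonals, so every vertex sees one $\pi$ and two $0$'s), and the global $2\pi$ condition follows because \reflem{OneUpOneDown} identifies exactly the two tetrahedron rectangles, $R^e$ and $R_e$, in which $R(e)$ is a spanning diagonal. You have written out the bookkeeping the paper leaves implicit, but the key lemma and the overall structure are identical.
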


We now define the \emph{induced} colouring of the one-skeleton of $\veer(\calL)$.
Suppose that $P$ is an edge rectangle, coloured according to \refdef{EdgeRect}. 
Then we give the edge $\cell(P)$ that same colour.

Consulting \reffig{VeeringTet}, this gives us the following.

\begin{corollary}
\label{Cor:Veering}
Orienting the foliations $F^\calL$ and $F_\calL$ induces a veering structure on $\veer(\calL)$.  
Thus $\veer(\calL)$ is locally veering. \qed
\end{corollary}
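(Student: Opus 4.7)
The plan is to verify that the induced edge colouring on $\veer(\calL)$ satisfies the veering condition at each vertex of each model tetrahedron. Once this is established, $\veer(\calL)$ is a veering triangulation (relative to the orientation of $|\veer(\calL)|$ produced in \reflem{Manifold}), and local veeringness follows immediately: each model edge neighbourhood inherits a taut structure from \refcor{Taut}, and the veering colourings of its constituent tetrahedra assemble to give a veering colouring on its three-ball realisation.

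Fix a tetrahedron rectangle $P$ with parameters $a, b, c, d$, and let $s$, $e$, $n$, $w$ denote the cusps on its south, east, north, and west sides respectively. By \reflem{TetFaceEdge}, the six edge rectangles of $P$ correspond to the six pairs of these cusps; the two spanning ones are $\{s, n\}$ (south-north spanning) and $\{e, w\}$ (west-east spanning), and these are precisely the edges that receive dihedral angle $\pi$ in $t_P$. Next I would observe that the four non-spanning edges always carry alternating colours around the equator of $t_P$: the cusps of $\{s, e\}$ and $\{n, w\}$ lie at the SW/NE corners of their respective edge rectangles, so by \refdef{EdgeRect} both edges are red, while $\{e, n\}$ and $\{w, s\}$ lie at the NW/SE corners of theirs and are therefore blue. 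The two $\pi$ edges receive colours determined by the signs of $a - c$ and $b - d$, both of which are nonzero by \refdef{Loom}\refitm{Keane}.

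With these colours in hand, I would check the veering condition at each of the four vertices of $t_P$. Since the two equatorial edges at any vertex have opposite colours, the check reduces, at each vertex, to a single orientation statement: the anticlockwise cyclic order of the three edges at that vertex, viewed from outside the tetrahedron, should place the blue equatorial edge immediately after the $\pi$ edge. This follows from the orientation convention on $t_P$ chosen in the proof of \reflem{Manifold} (the ordering S, E, N, W on the sides of $P$) together with the red/blue convention from \refdef{EdgeRect}. The main obstacle here is purely bookkeeping: I need to ensure that the orientation induced on $t_P$ and the convention for ``anticlockwise as viewed from outside'' are compatible with the edge rectangle colouring. Once the check at one vertex (say $s$) is carried out, the remaining three follow from the two involutive symmetries of the construction --- swapping the roles of the two foliations, and reversing a single direction.
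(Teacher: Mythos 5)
Your proposal is correct and takes essentially the same approach as the paper, which gives no written argument beyond ``consulting \reffig{VeeringTet}''. You have simply unpacked what the figure encodes: the coordinate identifications of the two $\pi$-edges as the south-north and west-east spanning edge rectangles, the alternating red/blue pattern on the four equatorial edges, and the single orientation check (one vertex, then symmetry) that verifies the anticlockwise convention of the veering definition.
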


\subsection{Functorial}
\label{Sec:Functorial}

We summarise this section as follows. 

\begin{proposition}
\label{Prop:Functorial}
Gu\'eritaud's construction 
\[
\veer \from \Loom(\RR^2) \to \Veer
\]
is a functor from the category of loom spaces to the category of locally veering triangulations. 
\end{proposition}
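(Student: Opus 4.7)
The plan is to verify the three functor axioms: $\veer$ sends loom spaces to locally veering triangulations, sends loom isomorphisms to taut isomorphisms, and respects composition and identities. The first axiom is already essentially established: given a loom space $\calL$, the realisation $|\veer(\calL)|$ is a three-manifold by \reflem{Manifold}, the dihedral assignment from \refcor{Taut} is taut, and $\veer(\calL)$ is locally veering by \refcor{Veering}. So the bulk of the work is the morphism axiom.

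To that end, let $f \from \calL \to \calM$ be a loom isomorphism. The key observation is that the class of tetrahedron rectangles, face rectangles, edge rectangles, and cusp rectangles can each be characterised purely in terms of (i) being a rectangle in the sense of \refdef{Rectangle}, and (ii) the topological pattern of missing boundary points. These conditions are symmetric in the two foliations. Since $f$ is a homeomorphism sending leaves to leaves, it carries each rectangle $R \subset \calL$ to an open set $f(R) \subset \calM$ that inherits a rectangle parametrisation from $f_R$ (possibly after swapping coordinates if $f$ interchanges $F^\calL$ with $F_\calM$). Thus $f$ induces bijections between the sets of tetrahedron, face, edge, and cusp rectangles of $\calL$ and $\calM$, and by \refdef{Equivalent} it descends to a bijection on cusps. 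In particular, $\veer_f$ is a well-defined bijection on tetrahedra, on faces, and on vertices, and it is compatible with the face pairings of \refdef{InducedTriangulation} because $f$ preserves the relation ``$R$ is a face rectangle of a tetrahedron rectangle $P$''. Hence $\veer_f$ is an isomorphism of ideal triangulations.

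It remains to check that $\veer_f$ preserves the taut structure. Here I would invoke \refrem{Spans}: the relation ``$R(e)$ spans the tetrahedron rectangle $P$'' is independent of any choice of orientation of the two foliations and is symmetric under their interchange. Since $f$ preserves inclusions of rectangles and sends leaves to leaves, it preserves spanning, hence sends edges of dihedral angle $\pi$ to edges of dihedral angle $\pi$, and similarly for zero. This is exactly the condition for $\veer_f$ to be a taut isomorphism in the sense of \refdef{TautIsom}.

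Finally, functoriality is immediate from the definition: for composable loom isomorphisms $f$ and $g$ and a skeletal rectangle $P$, one has $\veer_{g \circ f}(c_P) = c_{(g \circ f)(P)} = c_{g(f(P))} = \veer_g(c_{f(P)}) = (\veer_g \circ \veer_f)(c_P)$, and $\veer_{\Id_\calL}$ fixes every cell since $\Id_\calL$ fixes every skeletal rectangle. The only potentially delicate step is the argument that $f$ sends skeletal rectangles of $\calL$ to skeletal rectangles of $\calM$ of the same combinatorial type when $f$ swaps foliations; I expect this to be the main (though minor) obstacle, and I would handle it by appealing to the symmetry between the two foliations in the definitions of tetrahedron, face, edge, and cusp rectangles.
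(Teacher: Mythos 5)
Your proof follows the same structure as the paper's: establish that $\veer(\calL)$ is a locally veering triangulation via \reflem{Manifold}, \refcor{Taut}, and \refcor{Veering}, then verify the identity and composition axioms directly from \refdef{InducedMap}. You are in fact somewhat more careful than the paper at the step showing $\veer(f)$ is a taut isomorphism: the paper merely observes that $\veer(f^{-1})$ supplies an inverse, whereas you explicitly argue that a loom isomorphism preserves each class of skeletal rectangle and the spanning relation (via \refrem{Spans}), which is what is actually needed for $\veer_f$ to be well-defined and to carry the taut structure of $\veer(\calL)$ to that of $\veer(\calM)$.
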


\begin{proof}
Suppose that $\calL$ is a loom space. 
By \reflem{Manifold} its induced triangulation $\veer(\calL)$ is an ideal triangulation of a non-compact, connected, orientable three-manifold. 
By \refcor{Taut} the induced dihedral angle makes $\veer(\calL)$ into a taut triangulation. 
By \refcor{Veering} we have that $\veer(\calL)$ is locally veering. 

Suppose now that $\calM$ and $\calN$ are also loom spaces.
Suppose that $f \from \calL \to \calM$ and $g \from \calM \to \calN$ are loom isomorphisms.
Recall that we use the notations $\veer_f = \veer(f)$ to represent the induced map.

Suppose that $P \subset \calL$ is a skeletal rectangle. 
If $f = \Id_\calL$ then, appealing to \refdef{InducedMap}, we have $\veer_f(\cell(P)) = \cell(f(P)) = \cell(P)$. 
Thus $\veer(\Id_\calL) = \Id_{\veer(\calL)}$, as desired.

For general loom isomorphisms $f$ and $g$, and again appealing to \refdef{InducedMap}, we have
\begin{align*}
\veer_{g \circ f}(\cell(P)) &= \cell(  g( f(P) )  ) \\
                       &= \veer_g (  \cell( f(P) )  ) \\
                       &= (\veer_g \circ \veer_f) ( \cell(P) ) 
\end{align*}
Thus $\veer(g \circ f) = \veer(g) \circ \veer(f)$.  

Note that $f^{-1} \from \calM \to \calL$ is a loom isomorphism;
thus $\veer(f^{-1})$ is an inverse for $\veer(f)$. 
We deduce that $\veer(f)$ is a bijection. 
Note that the triangulations $\veer(\calL)$ and $\veer(\calM)$ are defined solely in terms of the combinatorics of $\calL$ and $\calM$.  
Thus the cell structures, angle assignments, and local veering structures are also preserved by $\veer(f)$.
\end{proof}

We deduce that $\veer \from \Aut(\calL) \to \Aut(\veer(\calL))$ is a group homomorphism. 
In fact it is an isomorphism; 
we do not prove this here. 
See the discussion in \refsec{Future}.

\section{Convexity}
\label{Sec:Convex}

In this section we prove \refthm{ThreeSpace}: the realisation of the triangulation $\veer(\calL)$ is homeomorphic to $\RR^3$.

\begin{remark}
\label{Rem:Bundle}
One proof of \refthm{ThreeSpace} runs along the following lines. 
Choose transverse measures of full support for $F^\calL$ and $F_\calL$. 
This gives $\calL$ an incomplete, locally euclidean, metric.
Suppose that $t \in \veer(\calL)$ is a model tetrahedron. 
By sending its vertices to the associated cusps (in the completed metric), 
and then extending to all of $t$ via barycentric coordinates, we obtain a linear map from $t$ to $\calL$.
We glue these linear maps together to obtain a piecewise-linear map $\pi \from |\veer(\calL)| \to \calL$. 
We now claim the following. 
\begin{itemize}
\item The map $\pi$ is continuous and surjective.
\item Point preimages under $\pi$ are copies of $\RR$. 
\item The map $\pi$ is a fibre bundle map. 
\end{itemize}
The proof of surjectivity requires the astroid lemma (\reflem{Astroid}\refitm{DoesNot}).
Since $\calL$ is homeomorphic to $\RR^2$ we deduce that $|\veer(\calL)|$ is isomorphic to a product.
Thus the claims imply \refthm{ThreeSpace}.

In their work, Landry, Minsky, and Taylor~\cite[Proposition~5.11]{LandryMinskyTaylor21} carry this strategy out but in a more delicate setting. 
They begin with a pseudo-Anosov flow without perfect fits. 
After drilling, the leaf space is a loom space, equipped with an action by the fundamental group of the drilled three-manifold. 
Here one cannot simply choose measures since, in general, measures invariant under the action need not exist.
\end{remark}

In the remainder of \refsec{Convex} we give a very different proof.
We develop synthetic notions of geodesicity and convexity in $\calL$.
We use these to prove that $|\veer(\calL)|$ admits an exhaustion by three-balls.
The synthetic approach is longer than that of \refrem{Bundle};
however it is constructive 
and is more revealing of the structure of $\calL$. 
In future work we use this combinatorial structure to give an algorithm for drilling veering triangulations along flow loops.



\subsection{Geodesics and convexity}
\label{Sec:Geodesics}

We define a \emph{polygonal geodesic} in $\calL$. 
These are \emph{polygonal paths} (as in \cite[Definition~3.1]{Fenley12}) with additional properties.  

Our geodesics are very similar to ``staircase'' geodesics in $\RR^2$ when equipped with the $L^1$ metric.
However, our definition is combinatorial; 
we do not make use of a metric on $\calL$.

\begin{definition}
\label{Def:PolyPathsAndCorners}
A \emph{segment} in $\calL$ is a subarc of a leaf of $F^\calL$ or $F_\calL$.
Suppose that $a$ and $b$ lie in $\calL \cup \Delta(\calL)$. 
A \emph{polygonal path} $\gamma$ from $a$ to $b$ is 
\begin{itemize}
\item
a finite sequence $(\rho_i)_{i = 0}^{n-1}$ of oriented segments and
\item
a finite sequence $(v_i)_{i = 0}^{n}$ of material points and cusps
\end{itemize}
where
\begin{itemize}
\item
$a = v_0$, $v_n = b$, and 
\item
the arc $\rho_i$ emanates from $v_i$ and ends at $v_{i+1}$.
\end{itemize}
We say that $v_i$ is a \emph{corner} of $\gamma$ if there is a rectangle $R$ with consecutive sides contained in $\rho_{i-1}$ and $\rho_i$, respectively. 

A corner $v_i$ is \emph{right-turning} if $R$ is locally to the right of $\gamma$.  
We define \emph{left-turning} corners similarly. 
\end{definition}

Suppose that $\gamma$ is a polygonal path in $\calL$.  
Then the cusps of $\gamma$ (that is, the elements of $\Delta(\gamma)$) lie among the endpoints $v_i$ of the segments $\rho_i$.
In particular, the cusps of $\gamma$ need not appear only at the corners of $\gamma$.

\begin{definition}
\label{Def:Geodesics}
We say that a polygonal path $\gamma$ is a \emph{geodesic} if
\begin{itemize}
\item 
the intersection of $\medcup_i \rho_i$ with any leaf is connected. \qedhere
\end{itemize}
\end{definition}

\begin{definition}
\label{Def:Convex}
Suppose that $H$ is a subset of $L$.
Then $H$ is \emph{convex} if, for any $p, q \in H \cup \Delta(H)$ we have that $H$ contains all geodesics between $p$ and $q$.
\end{definition}

\subsection{U-turns}

We will need a criterion for a path being a geodesic.  
We begin as follows. 

\begin{definition}
\label{Def:UTurns}
We say that a rectangle $R$ is a \emph{U-turn for $\gamma$} if
\begin{itemize}
\item 
three sides of $R$ are contained in $\gamma$ and
\item 
$R \cap \gamma$ is empty.
\end{itemize}
We say that a U-turn for $\gamma$ is \emph{maximal} if it is not properly contained in another U-turn for $\gamma$. 
See \reffig{UTurns} for several examples of maximal U-turns.
\end{definition}

\begin{figure}[htbp]
\subfloat[]{
\includegraphics[height = 1.5 cm]{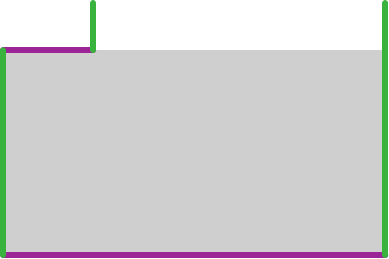} 
}
\quad
\subfloat[]{
\includegraphics[height = 1.5 cm]{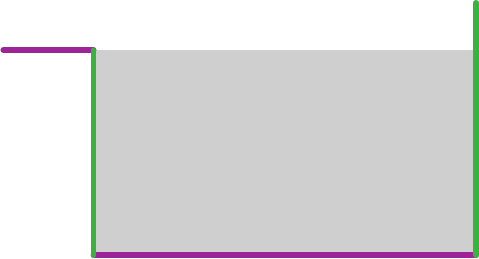} 
}
\quad
\subfloat[]{
\includegraphics[height = 1.5 cm]{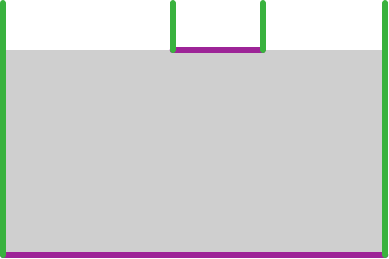} 
}
\quad
\subfloat[]{
\includegraphics[height = 1.5 cm]{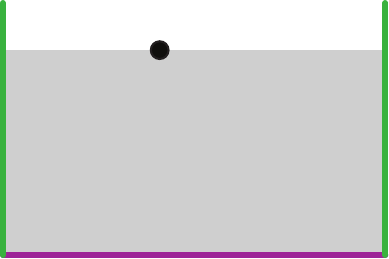} 
\label{Fig:UTurnCusp}
}

\caption{In each case the maximal U-turn is the shaded rectangle.  
Any combinatorial possibility may be obtained by combining these, 
and adding cusps to $\gamma$.}
\label{Fig:UTurns}
\end{figure}

An Euler characteristic argument gives the following.

\begin{lemma}
\label{Lem:Ear}
For any embedded polygonal loop $\gamma$ in $\calL$ and for any segment $\sigma$ of $\gamma$, there is a U-turn of $\gamma$ disjoint from $\sigma$. \qed
\end{lemma}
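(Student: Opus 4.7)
The plan is to apply a discrete Gauss--Bonnet / Euler characteristic argument to the disk that $\gamma$ bounds. Since $\gamma$ is embedded in $\calL \cong \RR^2$, the Jordan curve theorem gives a disk $D$ with $\bdy D = \gamma$. Orient $\gamma$ counterclockwise around $D$, so the total turning along $\gamma$ is $2\pi = 2\pi\,\euler(D)$. Each material corner of $\gamma$ contributes turning $+\pi/2$ (convex: $D$ lies on the obtuse side) or $-\pi/2$ (reflex), while each cusp corner contributes turning $+\pi$ or $-\pi$ depending on whether $D$ is at its acute or obtuse side. Writing $C, R$ for the numbers of convex and reflex material corners and $C^+_c, C^-_c$ for the two types of cusp corners, I would derive the identity
\[
(C - R) + 2\bigl(C^+_c - C^-_c\bigr) = 4.
\]

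Second, a combinatorial count on the cyclic sequence of corners (weighting cusps as double features, and tallying maximal runs of ``convex'' features) shows that the number of pairs of consecutive convex features along $\gamma$ is at least the surplus, namely $4$. Concretely, if there are $r$ maximal runs of convex features then the number of consecutive convex-convex adjacencies is the (weighted) count of convex features minus $r$, and $r$ is bounded above by the (weighted) count of reflex features.

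Third, each such consecutive pair at the ends of a segment $\rho$ of $\gamma$ produces a candidate U-turn. Both corners at the ends of $\rho$ turn into a common rectangle on one side of $\rho$; taking the two segments of $\gamma$ adjacent to $\rho$ (truncated to the length of the shorter one) supplies the two perpendicular sides. Call the resulting rectangle $R_0$. I would then shrink $R_0$ in the direction normal to $\rho$ until its open interior no longer meets $\gamma$. This is possible because $\gamma$ is a closed subset of $\calL$ and, by embeddedness, $\gamma \setminus (\rho \cup \rho' \cup \rho'')$ stays a positive distance from the interior of $\rho$. Three sides of the shrunken rectangle remain in $\gamma$ (the original $\rho$, subarcs of $\rho', \rho''$), so we obtain a bona fide U-turn in the sense of \refdef{Geodesic}.

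Finally, $\sigma$ is ``involved'' in at most three of the at-least-four candidate U-turns: once as the middle segment (if both endpoints of $\sigma$ are convex features), and at most twice as an outer side (one for each neighbour of $\sigma$, when the far endpoint of that neighbour is also convex). Picking any candidate not involving $\sigma$, and shrinking further if needed, yields a U-turn whose closed rectangle is disjoint from $\sigma$. The main obstacle is the bookkeeping at cusps: the turning contribution at a cusp depends on which side of the cusp the disk lies, and U-turns of the type in \reffig{UTurnCusp} must be constructed by extending the cusp sides into full rectangles via \refdef{Loom}\refitm{Cusp}. Verifying that the Gauss--Bonnet count and the run-combinatorics remain valid when convex features are cusps, and that the shrinking construction goes through when one or both endpoints of $\rho$ are cusps, is the main technical burden.
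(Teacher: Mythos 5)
The paper supplies no written proof of \reflem{Ear} --- it only says ``An Euler characteristic argument gives the following'' and closes with \qed --- so your overall strategy (Gauss--Bonnet on the Jordan disk $D$ bounded by $\gamma$, count a surplus of convex corners, find consecutive convex pairs, shrink to actual U-turns, and discard the few pairs that could touch $\sigma$) is precisely the kind of argument the authors have in mind, and steps three and four are sound.

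However, the specific turning contribution you assign at a cusp is wrong, and it matters. Test it on the boundary of a cusp rectangle $\closure{f}_R([0,1]^2 - \{(0,0)\})$: three convex material corners contribute $3\pi/2$, so the cusp must contribute exactly $\pi/2$ to reach $2\pi$ --- not $\pi$. More generally, at a cusp the disk $D$ can occupy $k\geq 1$ consecutive sectors (in the sense of \refdef{Sectors}), and the turning there is $(2-k)\pi/2$; it is $+\pi/2$ when $k=1$, $0$ when $k=2$ (as happens at each of the four boundary cusps of a tetrahedron rectangle), and negative for larger $k$. There is no ``obtuse side'' case, since the complementary side of a cusp comprises infinitely many sectors and cannot be locally filled by the bounded disk $D$; so $C_c^-=0$ always. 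Moreover, when $k\geq 2$ the two segments meeting at the cusp lie in leaves that never co-bound a rectangle, so the passage is not a \emph{corner} of $\gamma$ at all in the sense of \refdef{Geodesic}, and your tally must track these ``silent'' cusp passages separately in the run combinatorics. With the corrected accounting one still gets $C - R \geq 4$ (material and $k=1$ cusp corners counted as convex), but your displayed identity $(C-R)+2(C_c^+-C_c^-)=4$ is false, and the second-step bound on consecutive convex pairs needs to be re-derived accounting for the $k\geq 2$ cusp passages, which interrupt runs while contributing $0$ (when $k=2$) to the turning budget. This is exactly the ``bookkeeping at cusps'' you flagged as the technical burden, but as written the proposed bookkeeping does not close; you need the $(2-k)\pi/2$ formula and a run count that absorbs the non-corner cusp passages.
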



We use this to prove the following. 

\begin{lemma}
\label{Lem:NoUTurns}
An embedded polygonal path $\delta$ is a geodesic if and only it contains no U-turns.
\end{lemma}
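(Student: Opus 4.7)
The proof splits naturally into the two directions of the biconditional.

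For the forward direction, I would assume $\delta$ is a geodesic and suppose, for contradiction, that it contains a U-turn $R$. Three sides of $R$ lie on $\delta$ while $R \cap \delta = \emptyset$, and two of those sides are opposite; up to reflection, say they are the east and west sides of $R$, each contained in a leaf of $F^\calL$. Choose any leaf $\ell$ of $F_\calL$ crossing the interior of $R$. Then $\ell$ meets these two opposite sides in distinct points $p, q \in \delta$, while the open sub-arc of $\ell$ joining $p$ to $q$ lies in $R$ and is therefore disjoint from $\delta$. So $\delta \cap \ell$ contains both $p$ and $q$ but not the segment of $\ell$ between them, violating the connectedness condition in \refdef{Geodesic}.

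For the backward direction, I would assume $\delta$ contains no U-turn and aim to show that $\delta \cap \ell$ is connected for every leaf $\ell$. Suppose otherwise: choose two consecutive components of $\delta \cap \ell$ (in the ordering along $\ell$) and let $a, b$ be their adjacent endpoints, so that the open subsegment $\sigma \subset \ell$ strictly between $a$ and $b$ is disjoint from $\delta$. The subpath of $\delta$ from $a$ to $b$, concatenated with $\closure{\sigma}$, forms a polygonal loop $\gamma \subset \calL$. The subpath is embedded since $\delta$ is, and it meets $\closure{\sigma}$ only at $a$ and $b$ by the choice of consecutive components, so $\gamma$ is embedded. Now \reflem{Ear} supplies a U-turn $R$ of $\gamma$ disjoint from $\sigma$. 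Since the closure of $R$ does not touch $\sigma$, the three sides of $R$ lying in $\gamma = \delta \cup \closure{\sigma}$ must all lie in $\delta$. Hence $R$ is a U-turn of $\delta$, contradicting the hypothesis.

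The main obstacle is the backward direction. Two points require care. The first is verifying that the auxiliary loop $\gamma$ really is embedded; this is what forces the choice of $a$ and $b$ as endpoints of \emph{consecutive} components of $\delta \cap \ell$ rather than of arbitrary components. The second is extracting a U-turn of $\delta$ from the U-turn of $\gamma$ furnished by \reflem{Ear}: one must use the disjointness of the U-turn from $\sigma$ in a strong enough sense (closure-disjointness) to rule out the pathological possibility that one of the three $\gamma$-sides of the U-turn secretly runs along $\sigma$, in which case it would lie in $\ell$ rather than in $\delta$ and the argument would collapse.
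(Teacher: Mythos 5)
Your proof follows the paper's proof step for step: the forward direction via a transversal leaf crossing the interior of the U-turn, and the backward direction by closing up $\delta$ with a segment $\sigma$ into an embedded polygonal loop $\gamma$ and then invoking \reflem{Ear}. You make explicit two checks that the paper leaves implicit --- that $\gamma$ is embedded (forced by taking \emph{consecutive} components of $\delta\cap\ell$) and that the U-turn's sides in $\gamma$ avoid $\sigma$ --- and both are worth spelling out.

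One further point that you gesture at but do not close, and that the paper's (equally terse) proof also leaves tacit, is the passage from ``$R$ is a U-turn of $\gamma$'' to ``$R$ is a U-turn of $\delta$''. Being a U-turn of $\delta$ requires $R\cap\delta=\emptyset$, not merely $R\cap\gamma=\emptyset$, and a priori the portions of $\delta$ lying outside the subpath from $a$ to $b$ could wander into the Jordan disk bounded by $\gamma$ and meet $R$. Your line ``$\gamma=\delta\cup\closure{\sigma}$'' would make this automatic, but it contradicts your own (correct) construction of $\gamma$ as the subpath of $\delta$ from $a$ to $b$ together with $\closure{\sigma}$; a loop cannot contain all of the arc $\delta$ unless $a$ and $b$ happen to be the endpoints of $\delta$. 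Since the published proof is no more careful here, I would classify this as a shared gap in exposition (patchable, for instance, by choosing $\sigma$ to bound an innermost such disk and/or inducting on the number of segments of $\delta$) rather than a defect peculiar to your write-up.
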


\begin{proof}
If $R$ is a U-turn for $\delta$ then some leaf $\ell$ crossing the interior of $R$ intersects $\delta$ twice.

For the other direction, suppose that $\ell$ is a leaf that intersects $\delta$ twice.
Thus there is a segment $\sigma \subset \ell$ so that 
\begin{itemize}
\item
$\sigma \cap \delta = \bdy \sigma$ and 
\item
$\sigma \cup \delta$ contains an embedded polygonal loop $\gamma$.
\end{itemize}
We apply \reflem{Ear} to $\gamma$ and find a U-turn in $\delta$.
\end{proof}

\begin{lemma}
\label{Lem:Geodesic}
Suppose that $a$ and $b$ lie in $\calL \cup \Delta(\calL)$. 
Then there is a geodesic $\delta$ from $a$ to $b$. 
\end{lemma}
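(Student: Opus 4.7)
The plan is to exhibit any polygonal path from $a$ to $b$, take one with the fewest segments, and argue it is embedded and free of U-turns; \reflem{NoUTurns} will then identify it as a geodesic.

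First I would construct some polygonal path from $a$ to $b$. If $a$ is a material point of $\calL$, start from $a$ itself; if $a$ is a cusp, fix a cusp rectangle $A$ for $a$ and begin with a segment $\rho_0$ along a cusp leaf of $a$ entering $A$, using \refdef{Loom}\refitm{Cusp} to ensure the far endpoint of $\rho_0$ lies in some rectangle of $\calL$. Do likewise at $b$. Between the resulting initial and terminal rectangles choose an arc $\alpha$ in $\calL$ connecting interior points; since $\alpha$ is compact and the open rectangles form a basis for the topology of $\calL$, we may cover $\alpha$ by a finite chain $R_1, \ldots, R_n$ of rectangles whose consecutive members overlap. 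Pick transit points $p_i \in R_i \cap R_{i+1}$, and inside each $R_i$ join $p_{i-1}$ to $p_i$ by a polygonal path of at most two segments (first along a leaf of $F_\calL$, then along a leaf of $F^\calL$). Concatenating yields a polygonal path $\gamma$ from $a$ to $b$.

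Next, among all polygonal paths from $a$ to $b$ select $\gamma_0$ with the minimum number of segments. I claim $\gamma_0$ is embedded and has no U-turn. If $\gamma_0$ crossed itself, excising the resulting closed sub-loop would produce a polygonal path from $a$ to $b$ with strictly fewer segments, contradicting minimality. If $\gamma_0$ contained a U-turn $R$, then three consecutive segments of $\gamma_0$ would cover three sides of $R$; replacing those three sides with the fourth side of $R$, and merging any newly adjacent collinear segments into one, would again strictly reduce the segment count. Hence $\gamma_0$ is embedded and U-turn free, so \reflem{NoUTurns} shows that $\gamma_0$ is a geodesic.

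The principal obstacle is the construction of the initial polygonal path at a cusp endpoint. Cusps lie on the ideal boundary of $\calL$, so no rectangle of $\calL$ has a cusp in its closure; one must combine a cusp rectangle at $a$ with \refdef{Loom}\refitm{Cusp} to guarantee that the first segment $\rho_0$ emanating from $a$ terminates inside an ordinary rectangle, so that the threading argument through $R_1, \ldots, R_n$ can begin. A further minor subtlety in the U-turn reduction is that after replacement the new side of $R$ may continue an adjacent segment of $\gamma_0$ into a single longer segment, which only further reduces the segment count and so causes no difficulty.
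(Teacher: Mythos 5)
Your construction of an initial polygonal path from $a$ to $b$ and your appeal to \reflem{NoUTurns} both mirror the paper, but the reduction step has a genuine gap. You claim that replacing the three sides $s_1\subset\rho_{i-1}$, $s_2=\rho_i$, $s_3\subset\rho_{i+1}$ of a U-turn $R$ by the fourth side $\sigma$, together with merging collinear segments, strictly lowers the number of segments. This fails when $\rho_{i-1}\setminus s_1$ and $\rho_{i+1}\setminus s_3$ are both nonempty: the local picture changes from the three segments $\rho_{i-1},\rho_i,\rho_{i+1}$ to the three segments $\rho_{i-1}\setminus s_1$, $\sigma$, $\rho_{i+1}\setminus s_3$, and no merge is available because $\rho_{i-1}\setminus s_1$ lies in the leaf of $s_1$ while $\sigma$ lies in a leaf of the \emph{other} foliation (adjacent sides of a rectangle are transverse). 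The count is unchanged. Worse, if $\sigma$ has a cusp in its interior --- which is exactly the configuration shown in \reffig{UTurnCusp} --- then $\sigma$ is a concatenation of two segments meeting at the cusp, and the flip \emph{increases} the segment count. So a segment-count-minimal path is not forced to be U-turn free, and the argument does not close.

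This is precisely the subtlety the paper's proof is engineered to handle. Instead of segment count, the paper inducts on the lexicographic pair (number of corners, number of maximal U-turns). When a U-turn flip fails to lower the corner count, the paper argues that the interior of $\sigma$ is disjoint from $\delta$ and must contain a cusp, and in that case the count of maximal U-turns strictly decreases. Your observation about merging newly collinear segments is correct but only bites when $s_1=\rho_{i-1}$ or $s_3=\rho_{i+1}$; the problematic case where both containments are strict needs a second, finer measure exactly as in the paper. To repair your proof you would need either to restrict attention to maximal U-turns and separately analyse the cusp-in-$\sigma$ case, or to adopt a two-tier complexity as the paper does.
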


\begin{proof}
Using \reflem{Basis} or \refdef{Loom}\refitm{Cusp}, we choose rectangles $A$ and $B$ that have $a$ and $b$, respectively, in the interior of one of their sides. 
Since $\calL$ is path connected, there is a compact arc $\epsilon$ connecting a point of $A$ to a point of $B$. 
We cover $\epsilon$ by finitely many rectangles. 
The union of these rectangles with $A$ and $B$ contains an embedded polygonal path $\gamma$ from $a$ to $b$. 

By \reflem{NoUTurns} it now suffices to ``straighten'' $\gamma$ so that it contains no U-turns.
To this end, we define the \emph{complexity} of $\gamma$ to be the pair
\[ 
\mbox{(number of corners, number of maximal U-turns)}
\] 
ordered lexicographically.

We now induct on the complexity. 
Suppose that $\gamma$ is a polygonal path from $a$ and $b$.
Suppose that $R$ is a maximal U-turn for $\gamma$. 
Define
\[
\gamma' = (\gamma - \bdy{R}) \cup (\bdy R - \gamma)
\]
(Again, see \reffig{UTurns}.)
If $\gamma'$ is empty then $a = b$ and the desired geodesic path has no arcs. 
If not, then $\gamma'$ is a disjoint union of a polygonal path from $a$ to $b$ and some number of polygonal loops.
Let $\gamma''$ be the polygonal path and let $\{\gamma_i\}$ be the polygonal loops. 

We now claim that $\gamma''$ has lower complexity than $\gamma$.  
If $\gamma''$ has fewer corners than $\gamma$ we are done. 
If $\gamma''$ has the same number of corners as $\gamma$ then $\{\gamma_i\}$ is empty. 
Let $\sigma$ be the side of the maximal U-turn $R$ not contained in $\gamma$.  
We deduce that the interior of $\sigma$ is disjoint from $\gamma$. 
We further deduce that $\sigma$ contains a cusp in its interior; 
see \reffig{UTurnCusp}.
Thus $\gamma''$ has fewer maximal U-turns than $\gamma$. 
\end{proof}

\subsection{Sectors}

Properly speaking, cusps of $\calL$ are not points of $\calL$. 
Nonetheless we would like to treat points and cusps on an equal footing.
We do so as follows.

\begin{definition}
\label{Def:Meets}
Suppose that $x$ is a point or a cusp of $\calL$.
Suppose that $Q$ is a subset of $\calL$. 
We say that $Q$ \emph{meets} $x$ if either $x$ is a point lying in $Q$ or $x$ is a cusp of $Q$. 
\end{definition}

The following is similar to Fenley's definition of \emph{quarters}~\cite[page~22]{Fenley12}.

\begin{definition}
\label{Def:Sectors}
Suppose that $x$ is a point or cusp of $\calL$.
Let $\Lambda(x)$ be the union of all leaves meeting $x$.
We call the components of $\calL - \Lambda(x)$ the \emph{sectors based at $x$}. 
Two sectors based at $x$ are \emph{adjacent} if they are disjoint but their closures intersect along a subarc of some leaf. 
\end{definition}

Note that a (cusp) rectangle $R$, with an (ideal) corner at $x$, determines a unique sector based at $x$, denoted $\sector(x, R)$. 
Note also that the sector $\sector(x, R)$ contains the staircase $\stair(x, R)$. 

\begin{lemma}
\label{Lem:Sectors}
If $x$ is a point of $\calL$ then there are exactly four sectors based at $x$.  
If $x$ is a cusp of $\calL$ then there are countably many sectors based at $x$; 
these are linearly ordered by the adjacency relation.
\end{lemma}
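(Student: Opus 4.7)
The plan is to treat the two cases separately, the first via an elementary planar argument and the second via a reduction to the topology of a countable disjoint family of lines in $\RR^2$.

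For Case 1, set $\Lambda(x) = \ell^x \cup m_x$, the unique leaves of $F^\calL$ and $F_\calL$ through $x$. By \refrem{Leaves} each leaf is properly embedded in $\calL \cong \RR^2$ and separates it, and by transversality the two leaves meet only at $x$. A direct Jordan-type argument (remove $\ell^x$ to split $\calL$ into two half-planes, then remove $m_x$, which passes transversely from one half-plane through $x$ into the other, subdividing each into two) yields exactly four components of $\calL \setminus \Lambda(x)$. After fixing orientations as in \refrem{Cardinal}, these four components are identified with the SW, SE, NE, and NW quadrants at $x$; each is non-empty, witnessed by a small rectangle having $x$ as the appropriate material corner.

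For Case 2, the plan is first to describe $\Lambda(x)$ combinatorially and then apply a planar topology fact. The three combinatorial inputs are: (i) by \refrem{CuspLeaves}, any two cusp leaves of $x$ are disjoint in $\calL$; (ii) by \refrem{Leaves}, each cusp leaf is properly embedded in $\calL$; and (iii) by \refcor{Countable}, $\calL$ contains only countably many cusp leaves in total, hence only countably many at $x$. Moreover, any single cusp rectangle for $x$ contributes two distinct cusp leaves (one from each foliation), so the family is non-trivial. It then suffices to verify the following planar fact: for any non-empty countable family $\{\ell_i\}$ of pairwise disjoint properly embedded lines in $\RR^2$, the components of the complement are countable and linearly ordered under the adjacency relation ``closures share a subarc of some $\ell_i$''. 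The dual (adjacency) graph of these components is a tree by simple connectedness of $\calL$, and each vertex has degree at most two: a region is the intersection of the chosen open half-planes $\ell_i^{\pm}$, and one checks that at most two of the bounding lines can appear non-redundantly in its topological boundary, since any further bounding line would have to lie strictly to one side of those already bounding the region, contradicting disjointness or properness. A tree with every vertex of degree at most two is a path, giving the required linear order on the (countably many) sectors and matching the adjacency relation of \refdef{Sectors}.

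The main obstacle is this last planar fact about pairwise disjoint properly embedded lines in $\RR^2$. A clean route is to show first that any two such lines are isotopic (through properly embedded lines) to two parallel straight lines, and then either induct or exhaust by finite subfamilies to handle a countable family. The upshot is that the complement looks like a stack of strips, naturally indexed by an interval in $\ZZ$, from which the linear ordering is manifest.
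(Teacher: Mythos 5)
Your first case is fine and matches the paper's reasoning: $\Lambda(x)$ is the union of the two transverse leaves through $x$, and \refrem{Leaves} (proper embedding, separation, single transverse intersection) gives exactly four complementary quadrants.

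Your second case, however, has a genuine gap: the ``planar fact'' you reduce to is false. Countably many pairwise disjoint properly embedded lines in $\RR^2$ can have a complement whose components are uncountable, or whose adjacency graph is disconnected. For instance, removing the horizontal lines $\{y = q\}$ for all $q \in \QQ \cap [0,1]$ leaves uncountably many ``components'' (one for each irrational height in $(0,1)$), so countability of the cusp leaves alone does not yield countability of the sectors. Even with a discrete family, removing $\{y = 0\}$ together with $\{y = 1/n\}$ for $n \ge 1$ leaves the half-plane $\{y < 0\}$ as an isolated vertex in the adjacency graph: no other component shares a subarc of $\{y = 0\}$ with it, since points with small positive $y$-coordinate already lie between two of the other lines. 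So the adjacency graph need not be a path, and your degree-at-most-two claim is also not forced by disjointness and properness alone.

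What is missing is precisely the discreteness of the cusp leaves around a cusp, and this is what the paper's proof relies on: \refdef{Cusp} links the sectors via a chain of equivalent cusp rectangles, and \refdef{Loom}\refitm{Cusp} guarantees that each cusp side of such a rectangle is contained in an honest rectangle, which crosses the cusp leaf and lands in the \emph{next} cusp rectangle with no cusp leaves in between. Together with \refrem{CuspLeaves} (disjointness), this exhibits the sectors as a discrete chain indexed by a subinterval of $\ZZ$, giving both countability and the linear order by adjacency in one stroke. Your appeal to \refcor{Countable} is logically available at this point in the paper but is neither sufficient nor the mechanism the proof actually needs; the loom axiom is doing the real work.
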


\begin{proof}
The first follows from \reflem{Basis} and \refrem{Leaves}. 
The second follows from \refdef{Cusp}, \refdef{Loom}\refitm{Cusp}, and \refrem{CuspLeaves}.
\end{proof}

\begin{definition}
\label{Def:Between}
Suppose that $p$, $q$, and $r$ lie in $\calL \cup \Delta(\calL)$.
Suppose that $Q$ and $R$ are sectors based at $p$ so that $q$ and $r$ lie in the closures of $Q$ and $R$, respectively.
If the sectors $Q$ and $R$ are distinct and not adjacent, then we say that $p$ is \emph{between} $q$ and $r$.
\end{definition}

\begin{lemma}
\label{Lem:GeodesicThrough}
Suppose that $p$, $q$, and $r$ lie in $\calL \cup \Delta(\calL)$.
Then $p$ is between $q$ and $r$ if and only if there is a geodesic from $q$ to $r$ passing through $p$.
\end{lemma}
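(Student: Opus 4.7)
The plan is to combine the no-U-turn characterisation of geodesics from \reflem{NoUTurns} with the sector structure at $p$ established in \reflem{Sectors}.

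For the forward direction, suppose $Q$ and $R$ are distinct, non-adjacent sectors at $p$ with $q \in \closure{Q}$ and $r \in \closure{R}$. First I would run the proof of \reflem{Geodesic} inside each closed sector, producing a geodesic $\gamma_1$ from $q$ to $p$ contained in $\closure{Q}$ and a geodesic $\gamma_2$ from $p$ to $r$ contained in $\closure{R}$; both the initial rectangular cover of a path and the subsequent straightening step preserve containment in a closed sector, since any U-turn rectangle whose three sides lie in $\closure{Q}$ is itself contained in $\closure{Q}$. The concatenation $\gamma = \gamma_1 \cdot \gamma_2$ then passes through $p$ by construction.

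The main step is to verify that $\gamma$ is itself a geodesic. By \reflem{NoUTurns}, it suffices to rule out a U-turn rectangle $R^*$ of $\gamma$. The three sides of $R^*$ that lie in $\gamma$ form a connected sub-arc; if that sub-arc lies entirely in $\gamma_1$ or $\gamma_2$, it contradicts the geodesicity of that sub-path. Otherwise the sub-arc must straddle $p$, so two adjacent sides of $R^*$ meet at $p$ and lie on two leaves bounding $Q$ and $R$, one in each foliation. The third side of $R^*$ in $\gamma$, being opposite one of these, would have to lie on a distinct parallel leaf yet also in $\closure{Q} \cup \closure{R}$; the non-adjacency of $Q$ and $R$ ensures no such leaf exists, giving the desired contradiction. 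I expect this verification, especially in the cusp case where many intermediate sectors complicate the picture, to be the main technical obstacle.

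For the reverse direction, suppose $\gamma$ is a geodesic from $q$ to $r$ passing through $p$. Consider $\gamma$ near $p$: it consists of one or two segments meeting at $p$, with the adjacent segments of $\gamma$ (on the $q$- and $r$-sides) transverse to these. The plan is to test $\gamma$ against a leaf $\ell'$ parallel to, and slightly off from, a leaf through $p$: if the two adjacent segments point to the same side of such an $\ell'$, then $\gamma \cap \ell'$ splits into two disjoint components, contradicting \refdef{Geodesic}. The surviving configurations force the $q$-end and $r$-end of $\gamma$ to lie in distinct, non-adjacent sectors of $p$, showing that $p$ is between them.
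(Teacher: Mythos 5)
The paper's forward direction is more economical than yours: rather than re-running the construction of \reflem{Geodesic} inside each closed sector, it simply takes \emph{any} geodesics $\delta$ from $q$ to $p$ and $\epsilon$ from $p$ to $r$ and observes that a geodesic meeting $p$ at an endpoint is automatically confined to the closure of a single sector at $p$ (otherwise it crosses a leaf through $p$ or a cusp leaf for $p$, and its intersection with that leaf becomes disconnected). This observation does the work that you try to build into the construction. Your version is plausible but leaves more to check: you assert without argument that the rectangular cover of an arc and the U-turn-removal step of \reflem{Geodesic} both preserve containment in a closed sector, and while I believe this is true it does need a word or two.

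The more substantive issue is in your verification that the concatenation has no U-turn. Your claim that if the three sides of a U-turn $R^*$ straddle $p$, then ``two adjacent sides of $R^*$ meet at $p$,'' presumes $p$ is a material corner of $R^*$; $p$ could equally well lie in the interior of one of the three sides. In that latter case you need to push a subrectangle of $R^*$ into $\gamma_1$ or $\gamma_2$ to contradict \emph{their} geodesicity, which is a different mechanism from the one you describe. Moreover your final contradiction --- ``non-adjacency of $Q$ and $R$ ensures no such leaf exists'' --- is not the right statement: the leaf carrying the third side exists perfectly well, and the actual contradiction is that the positions of the three sides force $\gamma_1$ and $\gamma_2$ to lie in closures of adjacent (or equal) sectors, contradicting the betweenness hypothesis. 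You flag the cusp case as the hard part, but in fact the slipperiness is already present for material $p$; none of this is fatal, but the argument as written has gaps that need patching, and the paper's post-hoc confinement observation lets one avoid the case analysis entirely. Your reverse direction is essentially the paper's (which it leaves almost entirely implicit), and is fine.
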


\begin{proof}
Suppose that $p$ is between $q$ and $r$. 
\reflem{Geodesic} gives us geodesics $\delta$ from $q$ to $p$ and $\epsilon$ from $p$ to $r$.  
Since $\delta$ is a geodesic, and since it meets $p$, it must be contained in the closure of a single sector at $p$. 
The same holds for $\epsilon$. 
Thus $\delta$ and $\epsilon$ are contained in the closures of distinct, non-adjacent sectors based at $p$. 
Thus $\delta \cup \epsilon$ is the desired geodesic, giving the forward direction.  

The backward direction follows from the definition of geodesics. 
\end{proof}

\subsection{Hulls}

Here we again follow, at least in spirit, Section~3 of Fenley's paper~\cite{Fenley12}; 
see in particular his notion of \emph{convex polygonal paths}~\cite[Definition~3.2]{Fenley12}.

\begin{definition}
\label{Def:Hull}
Suppose that $C$ is a finite subset of $\calL \cup \Delta(\calL)$. 
We define the \emph{hull} of $C$ as follows:
\[
\hull(C) = \{ p \in \calL \st \mbox{$p$ is between some pair of elements of $C$} \} \qedhere
\]
\end{definition}

From \reflem{GeodesicThrough} we deduce the following. 

\begin{corollary}
\label{Cor:Geodesic}
Suppose that $C$ is a finite subset of $\calL \cup \Delta(\calL)$. 
Then $\hull(C)$ is the union of all geodesics connecting elements of $C$. \qed
\end{corollary}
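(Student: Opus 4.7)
The plan is to derive the corollary directly from \reflem{GeodesicThrough}, with each of the two inclusions of the asserted set equality matching one direction of that biconditional. For the inclusion $\hull(C) \subseteq \bigcup \{\gamma\}$, I would start with a point $p \in \hull(C)$. By \refdef{Hull} there exist $q, r \in C$ so that $p$ is between $q$ and $r$. Applying the forward direction of \reflem{GeodesicThrough} produces a geodesic from $q$ to $r$ passing through $p$, placing $p$ in the desired union. For the reverse inclusion, given a point $p$ lying on a geodesic $\gamma$ connecting $q, r \in C$, the backward direction of \reflem{GeodesicThrough} immediately yields that $p$ is between $q$ and $r$, so $p \in \hull(C)$ by \refdef{Hull}.

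Both steps are one-line consequences of the preceding lemma, so I foresee no substantive obstacle. The only notational care required concerns the case in which the point $p$ coincides with an endpoint $q$ or $r$ that happens to lie in $\calL$ rather than in $\Delta(\calL)$: in that degenerate case ``between'' in the literal sense of \refdef{Between} does not apply, since no sector based at $p = q$ contains $q$. I would resolve this by reading ``passes through $p$'' in \reflem{GeodesicThrough} as ``contains $p$ as an interior point of some segment of the polygonal path'', which is the natural reading and suffices. Alternatively, the endpoint ambiguity can be absorbed into the interpretation of the union: any $p \in C \cap \calL$ is captured as an interior point of a geodesic connecting two \emph{other} elements of $C$ when $|C| \geq 3$, and when $C = \{q, r\}$ the subgeodesic structure on $\gamma$ near $p$ already witnesses the containment modulo this convention.
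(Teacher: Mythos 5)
Your approach is the same as the paper's, which deduces the corollary immediately from \reflem{GeodesicThrough} together with \refdef{Hull}. The endpoint worry you raise is not a genuine issue: \refdef{Between} asks that $q$ and $r$ lie in the \emph{closures} of the sectors $Q$ and $R$, and a point (or cusp) $p$ lies in the closure of every sector based at $p$, so for $q \neq r$ the element $q$ is already between $q$ and $r$ under the literal definition; neither of your proposed workarounds is needed, and each would in fact wrongly exclude endpoints of $C \cap \calL$ from one side of the asserted equality.
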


Set $\hull(q, r) = \hull(\{q, r\})$.
Note that $\hull(q, r)$ is closed in $\calL$.
The definition of hull implies that $\hull(C) = \medcup_{q, r \in C} \hull(q, r)$.
Thus, since $\hull(C)$ is a finite union of closed sets, it is also closed.
Recall from \refdef{CuspsOf} that $\Delta(\hull(C))$ is the set of cusps of the hull $\hull(C)$. 

\begin{lemma}
\label{Lem:HullsConvex}
Suppose that $C$ is a finite subset of $\calL \cup \Delta(\calL)$.
Suppose that $q$ and $r$ lie in $\hull(C) \cup \Delta(\hull(C))$. 
Then $\hull(q, r) \subset \hull(C)$. 
\end{lemma}

It follows that hulls are convex.
Equally well, the boundary of a hull is the union of finitely many \emph{convex polygonal paths} in the sense of~\cite[Definition~3.2]{Fenley12}.
We prove a version of this in \reflem{Interval}\refitm{Rightmost}.

\begin{proof}[Proof of \reflem{HullsConvex}]
Set $H = \hull(C)$. 
We prove the contrapositive. 
That is, we assume the following: 
\begin{itemize}
\item
$C$ is a finite subset of $\calL \cup \Delta(\calL)$, 
\item
$q$ and $r$ lie in $\calL \cup \Delta(\calL)$, and
\item
some point $p \in \hull(q, r) \subset \calL$ does not lie in $H$.
\end{itemize}
We must show that one of $q$ or $r$ lies outside of $H \cup \Delta(H)$.

As in \refrem{Cardinal} we orient $F^\calL$ and $F_\calL$ so that we may refer to the cardinal directions in $\calL$.
By \refdef{Between} the point $p$ is between $q$ and $r$.
Let $\ell^p$ and $m_p$ be the leaves through $p$.
There are four sectors based at $p$;
breaking symmetry, we assume that $q$ lies in the closure of the south-west sector while $r$ lies in the closure of the north-east sector.

We have assumed that $p$ is not between any pair of points of $C$.
Breaking symmetry, we may assume that $C$ lies strictly to the east of $\ell^p$.
Since $C$ is finite, and appealing to either Corollary~\ref{Cor:CuspLeavesDense} or~\ref{Cor:NonCuspLeavesDense}, 
there is a leaf $\ell$ of $F^\calL$ that separates $\ell^p$ from $C$.
Appealing to \refdef{Between}, we deduce that $H$ is east of $\ell$.

Suppose that $q$ is a point of $\calL$. 
Then $q$ is on, or west of, $\ell^p$.  
Thus $q$ is strictly west of $\ell$. 
Thus $q$ is not between any pair of points of $C$, and we are done.

Suppose instead that $q$ lies in $\Delta(\calL)$.
Then no cusp rectangle at $q$ lies in $H$. 
Thus $q$ is not an element of $\Delta(H)$, and we are done.
\end{proof}

\begin{wrapfigure}[12]{r}{0.42\textwidth}
\vspace{-10pt}
\centering
\labellist
\small\hair 2pt
 \pinlabel {$q$} [tl] at 229 4
 \pinlabel {$r$} [br] at 5 181
 \pinlabel {$x$} [tr] at 24 68
 \pinlabel {$\delta$} at 77 38
 \pinlabel {$\epsilon$} at 202 110
 \pinlabel {$R$} at 51 90
\endlabellist
\includegraphics[height = 3.5 cm]{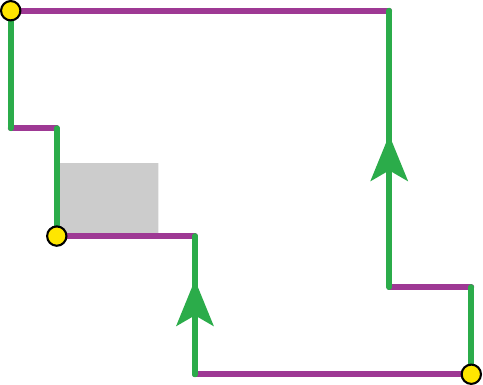}
\caption{Two geodesics $\delta$ and $\epsilon$ from $q$ to $r$.  
There is a right-turning corner (on $\delta$) at $x$.}
\label{Fig:RightOf}
\end{wrapfigure}

Suppose that $q$ and $r$ lie in $\calL \cup \Delta(\calL)$. 
Suppose that $\sector(q)$ and $\sector(r)$ are sectors, based at $q$ and $r$ respectively, so that $r$ lies in $\sector(q)$ and $q$ lies in $\sector(r)$. 
(This is possible if and only if $q$ and $r$ do not lie on a common leaf.)
Set $\sector(q, r) = \sector(q) \cap \sector(r)$.  
Note that the closure of $\sector(q, r)$ contains all geodesics from $q$ to $r$. 
Thus by \refcor{Geodesic}, the closure contains $\hull(q, r)$.

We fix, for the remainder of \refsec{Convex}, an orientation on $\calL$. 
Suppose that $\delta$ is a geodesic from $q$ to $r$.
We say that $x$, a point or cusp of $\hull(q, r)$, is \emph{to the right of} $\delta$ if $x$ meets the union of the components of $\sector(q, r) - \delta$ to the right of $\delta$.
We say that a geodesic $\epsilon$ is \emph{to the right of} $\delta$ if all points of $\epsilon - \delta$ are to the right of $\delta$.
We define \emph{to the left of} similarly.
See \reffig{RightOf}.

\begin{lemma}
\label{Lem:Interval}
Suppose that $q$ and $r$ lie in $\calL \cup \Delta(\calL)$. 
Then we have the following.
\begin{enumerate}
\item
\label{Itm:Finite}
$\Delta(\hull(q, r))$ is finite. 
\item
\label{Itm:OnGeodesic}
For every cusp $c \in \Delta(\hull(q, r))$ there is a geodesic from $q$ to $r$ running through $c$. 
\item
\label{Itm:Rightmost}
$\hull(q, r)$ contains a unique rightmost geodesic and also a unique leftmost geodesic.  
The rightmost geodesic has no right-turning corners; 
the leftmost geodesic has no left-turning corners. 
\item
\label{Itm:Boundary}
If $q$ and $r$ are cusps then the boundary of $\hull(q, r)$ is the disjoint union of the rightmost and leftmost geodesics. 
\item
\label{Itm:Disk}
If $q$ and $r$ are cusps then $\hull(q, r)$ is a disjoint finite union of finite-sided closed disks (minus finitely many boundary points).  
These disks may have cusps in common.
\end{enumerate}
\end{lemma}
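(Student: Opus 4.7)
The five items are proved in roughly the order (3), (1)--(2), (4)--(5); the real work is done by the astroid lemma (\reflem{Astroid}). Orient $F^\calL$ and $F_\calL$ as in \refrem{Cardinal}. The degenerate case in which $q$ and $r$ share a leaf is immediate: by \reflem{AtMostOne} at most one of them is a cusp of that leaf, so $\hull(q, r)$ is a sub-arc and the five claims are trivial. Henceforth I assume $q, r$ lie on distinct leaves of both foliations, and, relabeling if necessary, that $r$ lies in the NE sector of $q$.

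The rightmost geodesic $\gamma_R$ is built greedily. Starting at $q$, follow the east-going horizontal leaf (the east-going cusp leaf, if $q$ is a cusp) until first meeting an obstruction --- either a cusp $c_1$ whose SW sector contains $r$, or a leaf through $r$. Turn north there, and iterate with $c_1$ in place of $q$. The astroid lemma (\reflem{Astroid}\refitm{DoesNot}), applied to $\stair(q)$ pointed toward $r$, guarantees that only finitely many exterior cusps project into any bounded initial segment of the lower axis ray; in particular the ``next cusp'' is well-defined and the iteration halts after finitely many steps. The polygonal path $\gamma_R$ so produced has no U-turn by construction and is thus a geodesic by \reflem{NoUTurns}. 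For uniqueness of the rightmost: any geodesic $\delta$ straying east of $\gamma_R$ would together with $\gamma_R$ enclose an embedded polygonal loop, and then \reflem{Ear} supplies a U-turn on $\delta$, contradicting \reflem{NoUTurns}. Building $\gamma_L$ symmetrically, and noting that the maximality of each greedy step rules out right-turning corners on $\gamma_R$ (left-turning on $\gamma_L$), proves (3).

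For (2) and (1): by \refcor{Geodesic} and \reflem{Convex} the hull $\hull(q, r)$ is the union of all $q$--to--$r$ geodesics, each of which lies in the closed region $\closure{B}$ between $\gamma_R$ and $\gamma_L$. If $c \in \Delta(\hull(q, r))$ then $c$ is between $q$ and $r$ by the definition of hull, so by \reflem{GeodesicThrough} some geodesic passes through $c$; this is (2). For (1), I induct on the number of segments in $\gamma_R$. In the base case $\gamma_R$ has two segments, meaning $\gamma_R \cup \gamma_L$ bounds a single rectangle, and (2) limits the cusps to the (at most four) corners. For the inductive step, let $c_1$ be the first turn of $\gamma_R$: then any geodesic from $q$ to $r$ either passes through $c_1$ (contributing $c_1$ and, by induction applied to $\hull(c_1, r)$, finitely many more cusps) or can be pushed east to meet $c_1$ without changing the hull (using \reflem{Convex}); the same induction applied to $\hull(q, c_1)$ handles the remaining cusps. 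Summing over the finitely many turns of $\gamma_R$ and $\gamma_L$ completes (1).

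Finally for (4) and (5), take $q, r$ both to be cusps. By (1) the geodesics $\gamma_R$ and $\gamma_L$ share $q$, $r$, and finitely many further pinch cusps $q = p_0, p_1, \ldots, p_k = r$ (those through which every $q$--to--$r$ geodesic must pass). Cutting $\hull(q, r)$ at each pinch yields finitely many pieces; each is bounded by a sub-arc of $\gamma_R$ between consecutive $p_i, p_{i+1}$ together with the matching sub-arc of $\gamma_L$, sharing only the endpoints. These form an ideal polygon which becomes a Jordan curve once the two ideal vertices are filled in; Schoenflies identifies each piece as a closed disk with finitely many sides, giving (5). The same decomposition shows that the boundary of $\hull(q, r)$ in $\calL$ is exactly $\gamma_R \cup \gamma_L$, proving (4). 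The main obstacle is part (1): without the astroid lemma the greedy construction of $\gamma_R$ could fail to terminate, and one could imagine accumulation of cusps inside the hull.
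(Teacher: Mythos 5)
Your proof reorders the items, proving \refitm{Rightmost} first by a greedy construction and then deriving \refitm{Finite} by induction on the number of segments of the rightmost geodesic. The paper does the opposite: it proves \refitm{Finite} and \refitm{OnGeodesic} first, from an arbitrary finite geodesic $\gamma$, and only then obtains the rightmost geodesic by iterating the ``push right'' move of \refitm{OnGeodesic} finitely many times (\refitm{Finite} guarantees termination). This reordering is where the gap appears.

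The termination claim in your greedy construction misreads the astroid lemma. You write that \reflem{Astroid}\refitm{DoesNot} ``guarantees that only finitely many exterior cusps project into any bounded initial segment of the lower axis ray,'' so that ``the `next cusp' is well-defined.'' This is exactly what \reflem{Astroid}\refitm{Does} says is \emph{not} the case: the projections $\pi_m(\ExtDel(\stair(q)))$ accumulate at the base point $q$, so every initial segment of $m$ contains infinitely many of them and there is no first cusp along the axis. What \reflem{Astroid}\refitm{DoesNot} does give --- and this is the mechanism the paper's proof of \refitm{Finite} is built around --- is that only finitely many exterior cusps have \emph{both} projections, to $m_x$ \emph{and} to $\ell^x$, land in prescribed bounded arcs $m_x^\gamma$ and $\ell^x_\gamma$. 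The paper exploits this by starting from any finite polygonal geodesic $\gamma$: its projections to the axis rays of any corner are compact, so the set $\ExtDel^\gamma(\stair(x))$ is finite, and every cusp of $\hull(q,r)$ is shown to fall in one of these finitely many finite sets. Your argument never has a finite geodesic to control both projections, so the astroid lemma cannot be invoked the way you do. Since \refitm{Rightmost} is unproven, the induction in your item \refitm{Finite} (which presupposes the finite rightmost geodesic $\gamma_R$) and the pinch-cusp decomposition in \refitm{Boundary}--\refitm{Disk} inherit the gap. Separately, even granting $\gamma_R$, the inductive step of your \refitm{Finite} asserts that an arbitrary geodesic ``can be pushed east to meet $c_1$ without changing the hull'' and then applies the induction to $\hull(q, c_1)$ and $\hull(c_1, r)$; but it is not established that every cusp of $\hull(q,r)$ actually lies in one of those two smaller hulls, so the split is not justified. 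The remedy in both places is the paper's order of argument: establish \refitm{Finite} and \refitm{OnGeodesic} directly from a single arbitrary geodesic and the two-sided form of the astroid lemma, and only then build the rightmost and leftmost geodesics.
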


\begin{proof}
\reflem{Geodesic} gives us a geodesic $\gamma$ from $q$ to $r$.
Suppose that $x$ is a right-turning corner of $\gamma$.
Let $R$ be the rectangle provided by \refdef{PolyPathsAndCorners}. 
Let $\ell^x$ and $m_x$ be the axis rays of the staircase $\stair(x, R)$.
Note that both $\ell^x$ and $m_x$ have initial segments contained in $\gamma$.

Define $m_x^\gamma$ to be the \emph{projection} of $\gamma$, to $m_x$, along $F^\calL$.
That is, suppose for $y \in \calL$ we have that $\ell^y$ is the leaf of $F^\calL$ through $y$.
Then we define $m_x^\gamma \subset m_x$ to be the polygonal path obtained by taking the closure of 
\[
 \{ m_x \cap \ell^y \st y \in \gamma \}
\]
We define $\ell^x_\gamma \subset \ell^x$ similarly.
See \reffig{Projection}. 

\begin{figure}[htbp]
\labellist
\small\hair 2pt
 \pinlabel {$x$} [tr] at 61 98
 \pinlabel {$y$} [tr] at 161 46
 \pinlabel {$c$} [bl] at 190 187
 \pinlabel {$m_x$} [l] at 284 101
 \pinlabel {$\ell^x$} [r] at 63 271
 \pinlabel {$m_c$} [r] at 1 184
 \pinlabel {$\ell^c$} [l] at 187 2
 \pinlabel {$\ell^y$} [r] at 164 2
 \pinlabel {$m_x^\gamma$} [b] at 213 117
 \pinlabel {$\ell^x_\gamma$} [l] at 80 147
 \pinlabel {$\gamma$} [t] at 225 40
\endlabellist
\includegraphics[width = .5 \textwidth]{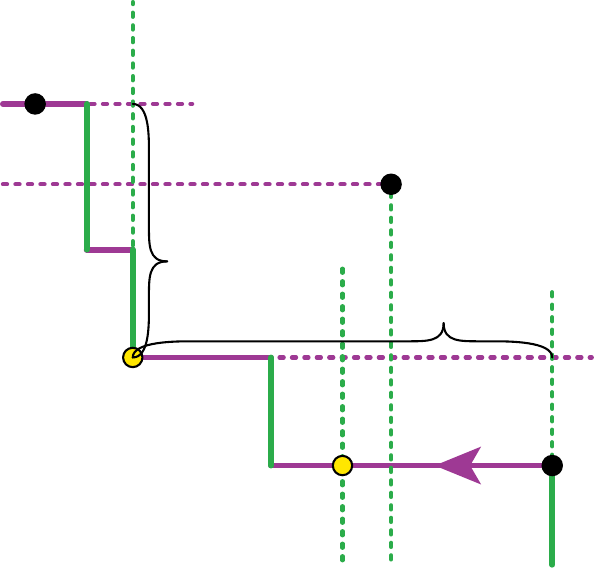}
\caption{The two projections of $\gamma$ to the axis rays of a right-turning corner $x$.}
\label{Fig:Projection}
\end{figure}

Note that since $\gamma$ is a finite polygonal path, 
both $m_x^\gamma$ and $\ell^x_\gamma$ are bounded in $m_x$ and $\ell^x$ respectively.  
(However, by \reflem{BdyQ}, the projection $m_x^\gamma$ may meet the axis cusp $c_m$ for $m_x$, if it exists.
A similar statement holds for $\ell^x_\gamma$.)
Recall that $\ExtDel(\stair(x))$ is the set of exterior cusps for $\stair(x)$. 
We define $\ExtDel^\gamma(\stair(x))$ to be those cusps of $\ExtDel(\stair(x))$ whose projections to $m_x$ and $\ell^x$ lie in $m_x^\gamma$ and $\ell^x_\gamma$ respectively. 
By the astroid lemma (\reflem{Astroid}\refitm{DoesNot}) we have that $\ExtDel^\gamma(\stair(x))$ is finite. 

Suppose that $c$ is any cusp of $\Delta(\hull(q, r))$, not on $\gamma$. 
Suppose that $c$ is to the right of $\gamma$: 
that is, $c$ lies in one of the components of $\sector(q, r) - \gamma$ to the right of $\gamma$. 
Since $c$ is a cusp of $\Delta(\hull(q, r))$, it lies between $q$ and $r$. 
Thus there are distinct, non-adjacent sectors $U$ and $V$ based at $c$ whose closures contain $q$ and $r$, respectively. 
By \reflem{Sectors} there is a sector $S$ based at $c$ separating $U$ from $V$. 
Let $m_c$ and $\ell^c$ be the cusp leaves giving the sides of $S$. 

By construction neither $q$ nor $r$ lie in $S$. 
(They may lie in the closure of $S$; that is, in $m_c$ or in $\ell^c$.)
Thus $\gamma$ intersects both $m_c$ and $\ell^c$. 
Let $S_\gamma$ be the component of $S - \gamma$ that meets $c$. 
We define $\gamma' = \gamma \cap \bdy S_\gamma$. 
Note that $\bdy S_\gamma$ is a closed polygonal loop. 

An Euler characteristic argument, applied to $\bdy S_\gamma$,
gives us a right-turning corner $x$ of $\gamma'$.
We deduce that $\ell^c$ crosses $m_x^\gamma$; 
likewise $m_c$ crosses $\ell^x_\gamma$. 
Thus $c$ lies in $\ExtDel^\gamma(\stair(x))$.
It follows that every cusp of $\Delta(\hull(q, r))$ either lies on $\gamma$ or lies in $\medcup_x \ExtDel^\gamma(\stair(x))$. 
Here $x$ ranges over the corners of $\gamma$. 
Thus $\Delta(\hull(q, r))$ is a finite union of finite sets. 
This proves \refitm{Finite}.

Suppose now that $c$ is a cusp of $\Delta(\hull(q, r))$.  
Pick any geodesic $\gamma$ from $q$ to $r$.  
If $c$ lies in $\gamma$ we are done.  
If not, then we may assume that $c$ is to the right of $\gamma$.  
Then, as above, we find $S$ and $S_\gamma$.  
Using these we define
\[
\epsilon = (\gamma - \bdy S_\gamma) \cup (\bdy S_\gamma - \gamma) 
\]
This is a geodesic through $c$, and so gives \refitm{OnGeodesic}. 

Starting with any geodesic from $q$ to $r$, \refitm{OnGeodesic} finds a geodesic to its right.
Repeating this, we find a sequence of geodesics;
by \refitm{Finite} this sequence ends with a geodesic $\zeta$ which passes through all of the rightmost cusps. 
If $\zeta$ has a right-turning corner, say at $x$, then the cusps immediately before and after $x$ span an edge rectangle.
Flipping over such edge rectangles, we obtain a new geodesic $\rho$.
Since $\rho$ has no right-turning corners, $\rho$ is rightmost.
This proves \refitm{Rightmost}.

Suppose that $\rho$ and $\lambda$ are, respectively, the rightmost and leftmost geodesics in $\hull(q, r)$.  
Suppose for a contradiction that $\rho$ and $\lambda$ intersect at a point $p$ of $\calL$.  
If $\rho$ and $\lambda$ cross at $p$ then either $\rho$ is not rightmost or $\lambda$ is not leftmost. 
Either is a contradiction. 
We deduce that near $p$ both $\rho$ and $\lambda$ lie in a single leaf, say $m_p$.  
Thus there is a cusp at the beginning of the segment $\rho \cap m_p$.  
Likewise there is a cusp at the end of the segment $\lambda \cap m_p$.  
These contradict \reflem{AtMostOne}.  This proves \refitm{Boundary}.

Appealing to the Jordan curve theorem and \refitm{Boundary} we obtain \refitm{Disk}. 
\end{proof}

\begin{corollary}
\label{Cor:StaircaseConvex}
Staircases are convex.
\end{corollary}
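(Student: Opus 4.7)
The plan is to realise $\stair(x)$ as an intersection of convex \emph{half-planes} (closures of components of leaf-complements), and then conclude that an intersection of convex sets is convex.

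First I would establish that closed half-planes are convex. Let $L$ be a leaf of $F^\calL$ or $F_\calL$ and let $L_+$ denote the closure of one component of $\calL - L$. Suppose $q, r \in L_+ \cup \Delta(L_+)$ but some $p \in \hull(q, r)$ lies outside $L_+$. By \reflem{GeodesicThrough} there is a geodesic $\gamma$ from $q$ through $p$ to $r$; since $\gamma$ leaves and returns to $L_+$, the intersection $\gamma \cap L$ must be nonempty on two disjoint parts. A short case analysis on whether $\gamma$ meets $L$ transversely at a point or along a segment shows that $\gamma \cap L$ is then disconnected, contradicting the geodesic property (this is essentially the obstruction exposed by a U-turn in \reflem{NoUTurns}).

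Next I would present $\stair(x)$ as such an intersection. Breaking symmetry so that $x$ is the southwest corner of $R$, the supporting leaves are: the one or two leaves carrying the lower axis ray $m$, the one or two leaves carrying the upper axis ray $\ell$ (depending on whether axis cusps exist), and, for each consecutive pair of exterior cusps along the northeast boundary---ordered by their projections to $m$, which are locally finite by \reflem{Astroid}\refitm{DoesNot}---the two leaves carrying the outer sides of the edge rectangle joining them (which exists by \refcor{EdgesOnStair}). Each half-plane $L_+$ is chosen to contain $\stair(x)$. The inclusion $\stair(x) \subset \bigcap_L L_+$ is immediate from \refdef{Staircase} and the defining properties of axis rays (\reflem{BdyQ}). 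For the reverse inclusion, given a point or cusp satisfying all half-plane conditions I would construct a rectangle with $x$ as a corner that realises it (as an interior point, side point, or ideal corner), by enlarging about $x$ in the spirit of \reflem{Initial}, using the half-plane conditions to guarantee the enlargement remains within $\stair(x)$.

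Finally, arbitrary intersections of convex sets are convex in the sense of \reflem{Convex}: if $q, r \in \stair(x) \cup \Delta(\stair(x))$, then $q, r \in L_+ \cup \Delta(L_+)$ for each supporting leaf $L$, so $\hull(q, r) \subset L_+$; intersecting over $L$ yields $\hull(q, r) \subset \stair(x)$. The main obstacle will be the reverse inclusion in the middle step: carefully enumerating the supporting leaves and constructing witnessing rectangles for each point satisfying the half-plane conditions, handling axis cusps and the limit behaviour at the far ends of the axis rays with care.
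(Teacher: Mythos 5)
The paper takes the opposite tack: it approximates the staircase from the inside by an increasing sequence of hulls $H_k = \hull(\{x, c_1, \ldots, c_k\})$ (the $c_j$ are exterior cusps), invokes Lemma~\ref{Lem:Convex} for the convexity of each $H_k$, and uses Corollary~\ref{Cor:EdgesOnStair} together with Lemma~\ref{Lem:Interval}\refitm{Rightmost} to place $\bdy H_k$ inside $\stair(x)$. Your approach works from the outside in, realising the staircase as an intersection of leaf half-planes. Both are plausible, but the paper's route is essentially free once Lemma~\ref{Lem:Convex} is in hand, while yours re-derives a form of convexity for half-planes from scratch (the appeal to Lemma~\ref{Lem:NoUTurns} there is fine).

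There is, however, a gap you do not flag. You write ``Each half-plane $L_+$ is chosen to contain $\stair(x)$,'' which presupposes that the staircase lies entirely on one side of each supporting leaf. This is not obvious. The dangerous case is a leaf $L$ carrying one of the outer sides of an edge rectangle joining consecutive exterior cusps $c$ and $d$: such an $L$ is a cusp leaf for, say, $c$, and one must rule out a rectangle $Q \subset \stair(x)$ with corner at $x$ crossing $L$. The argument that works is roughly: any such $Q$ has a side on the axis leaf $\ell^x$ (or $m_x$), so if $L$ crossed $Q$ then $L$ would meet $\ell^x$; but $L$ is separated from $\ell^x$ by another cusp leaf at $c$ (the one carrying a side of the opposite-corner rectangle $R_c$), and distinct cusp leaves at a single cusp are disjoint by Remark~\ref{Rem:CuspLeaves}. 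Without an argument of this kind, the whole decomposition is unjustified. The reverse inclusion $\bigcap_L L_+ \subset \stair(x)$ is the second gap, which you do acknowledge; it needs the accumulation behaviour from Lemma~\ref{Lem:Astroid}\refitm{Does} at $\bdy m$ and $\bdy \ell$ as well as a case analysis for axis cusps. So: a genuinely different and workable strategy, but substantially longer than the paper's three lines, and with a one-sidedness claim that requires justification you have omitted.
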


\begin{proof}
Suppose that $\stair(x)$ is the given staircase.
By \reflem{Astroid}, the set of exterior cusps is countable.
Let $H_k$ be the hull of $x$ together with the first $k$ exterior cusps.
By \refcor{EdgesOnStair} and \reflem{Interval}\refitm{Rightmost} the boundary of $H_k$ is contained in the closure of $\stair(x)$.
Thus the staircase is a growing union of convex sets.
\end{proof}

Lemmas \ref{Lem:Interval}\refitm{Disk} and \ref{Lem:HullsConvex} prove the following. 

\begin{corollary}
\label{Cor:Cactus}
Suppose that $C$ is a finite subset of $\Delta(\calL)$. 
Then $\hull(C)$ is a disjoint finite union of finite-sided disks 
(which may have cusps in common). \qed
\end{corollary}

\subsection{Skeletal rectangles, redux}

With convexity in hand, we are equipped to prove various existence and uniqueness results. 

\begin{lemma}
\label{Lem:ThreeEdges}
Suppose that three distinct edge rectangles meet three distinct cusps. 
Then they are all contained in a single face rectangle.
\end{lemma}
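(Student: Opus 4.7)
Plan. The strategy is to reduce to a canonical configuration via a short combinatorial argument, then construct the face rectangle explicitly using coordinates. Label the three edge rectangles $E_{ab}, E_{bc}, E_{ca}$ by the pairs of cusps they meet. At each of the three cusps, the two incident edge rectangles occupy two of the four quadrants at that cusp. They cannot occupy \emph{opposite} quadrants, because in that configuration the third edge rectangle would have to contain the reference cusp as an additional cusp in the sense of \refdef{CuspsOf}, contradicting \refdef{EdgeRect} (an edge rectangle has exactly two cusps). If at some cusp the two incident edge rectangles share a quadrant, work from that cusp; otherwise the incident pair is in adjacent quadrants at each cusp, and a direct colour check shows that at one of the other two cusps the pair lies in a single common quadrant. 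After relabeling the cusps and reorienting the foliations via \refrem{Cardinal}, we may therefore assume that both $E_{ab}$ and $E_{ac}$ lie in the NE quadrant at $a$: both are red edge rectangles with $a$ at the SW corner.

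Using the rectangle parametrizations, set up local coordinates with $a$ at the origin, the east-going cusp leaf $m_a$ along the positive $x$-axis, and the north-going cusp leaf $\ell^a$ along the positive $y$-axis. Write $b = (x_b, y_b)$ and $c = (x_c, y_c)$, all coordinates positive. The Keane condition \refdef{Loom}\refitm{Keane} together with $b \neq c$ gives $x_b \neq x_c$ and $y_b \neq y_c$. Swapping $b$ and $c$ if necessary, take $x_b < x_c$; then we must have $y_b > y_c$, since the opposite inequality would place $b$ as an additional cusp of $E_{ac} = (0, x_c) \times (0, y_c)$, again contradicting \refdef{EdgeRect}. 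In this arrangement the third edge rectangle $E_{bc}$ is forced to be the blue edge rectangle with $b$ at NW and $c$ at SE, parametrised by $(x_b, x_c) \times (y_c, y_b)$.

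Now define $F = (0, x_c) \times (0, y_b) \subset \calL$. By construction $F$ contains all three edge rectangles. The boundary of $F$ sits in four cusp leaves: the south side in $m_a$, the west side in $\ell^a$, the east side in $\ell^c$, and the north side in $m_b$. By \reflem{AtMostOne} each of these leaves is a cusp leaf for exactly one cusp of $\calL$, so the cusps of $F$ (in the sense of \refdef{CuspsOf}) are exactly $a$ at the SW corner, $b$ at $(x_b, y_b)$ on the interior of the north side, and $c$ at $(x_c, y_c)$ on the interior of the east side. The remaining three corners of $F$ are material intersection points of pairs of these leaves. Hence $F$ satisfies the definition of a south-west face rectangle (\refdef{FaceRect}) and contains all three given edge rectangles.

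The principal obstacle is the initial combinatorial reduction: verifying rigorously that opposite-quadrant configurations are impossible and that in the adjacent-quadrant case the common-quadrant condition can always be arranged by relabeling. Once this reduction is in place, the remainder of the proof is a direct geometric identification.
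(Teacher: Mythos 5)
Your combinatorial reduction (ruling out the ``opposite'' configuration, then using a two-colour argument to produce a cusp at which both incident edge rectangles lie in a common sector) is a reasonable start, although it is stated in terms of ``four quadrants'' when in fact \reflem{Sectors} allows countably many sectors at a cusp.  The real problem comes afterward, in the coordinate construction.  You assert that $E_{bc}$ is parametrised by $(x_b,x_c)\times(y_c,y_b)$ and that the candidate face rectangle $F=(0,x_c)\times(0,y_b)$ has its north side contained in the single leaf $m_b$ and its east side contained in the single leaf $\ell^c$.  Neither claim is correct, because cusp leaves for the same cusp are pairwise \emph{disjoint} (\refrem{CuspLeaves}), and a single leaf cannot have both ends approaching the same cusp (the covering argument in \reflem{AtMostOne} rules this out).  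Consequently the cusp leaf of $b$ bounding $E_{ab}$ on the north (approaching $b$ from the west) and the cusp leaf of $b$ bounding $E_{bc}$ on the north (approaching $b$ from the east) are \emph{distinct} leaves of $F_\calL$; similarly for the two vertical cusp leaves of $c$.  By the same token, the north side of any face rectangle whose interior of a side contains an ideal point $(b,1)$ is the union of two arcs lying in two distinct leaves of $F_\calL$, separated by the cusp --- it is not a subarc of a single leaf.  So the region you write as $(0,x_c)\times(0,y_b)$ does not describe $F$, and the inclusion $E_{bc}\subset F$ is unsupported.  The ``Euclidean'' intuition that the horizontal and vertical cusp leaves form a grid with $b$ sitting at the crossing $(x_b,y_b)$ fails precisely because $\ell^b$ and $m_b$ never meet.

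There is also a smaller issue: you cite \refdef{Loom}\refitm{Keane} to conclude $x_b\neq x_c$ and $y_b\neq y_c$, but the relevant obstruction here is \reflem{AtMostOne} (two distinct cusps cannot share a cusp leaf); Keane's condition concerns tetrahedron rectangles rather than a pair of edge rectangles.  And the claim in the ``opposite quadrants'' case that the third edge rectangle would contain the reference cusp as an additional cusp deserves more justification than an appeal to \refdef{CuspsOf}; it actually needs the hull machinery (\reflem{GeodesicThrough}, \reflem{TwoCusps}) that you were trying to avoid.

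The paper takes an entirely different route.  It forms the hull $H$ of the three cusps, uses \refcor{Disks} to see $H$ is a disjoint union of finite-sided disks, argues that if $H$ were disconnected some geodesic would force an edge rectangle to acquire a third cusp, and concludes $H$ is a single disk.  It then identifies $\partial H$ with the three rightmost geodesics from \reflem{Interval}\refitm{Rightmost} (each consisting of exactly two segments), and enumerates convex polygons with at most six segments to see that $H$ must be a face rectangle.  That argument stays entirely combinatorial and never needs coordinates, so it sidesteps the disjoint-cusp-leaf problem that undermines your construction.  To repair your proof you would at minimum need to establish that the two cusp leaves of $b$ flanking the ideal point $b$ on the north side of $F$ bound a genuine rectangle together with $m_a$, $\ell_a$, and the two cusp leaves of $c$ --- which is essentially the content the paper extracts from the convexity lemmas.
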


\begin{proof}
Let $a$, $b$, and $c$ be the given cusps. 
Let $A$, $B$, and $C$ be the given edge rectangles; 
by \reflem{OneEdgeTwoCusps}, we may assume that $A$ does not meet $a$, and so on.

Let $H$ be the hull of $a$, $b$, and $c$.  
By definition, the union of the closures of $A$, $B$, and $C$ gives $H$. 
By \reflem{Hull}\refitm{FiniteDisk} the hull $H$ is a disjoint finite union of
finite sided disks, adjacent only at cusps of $\Delta(H)$.
Suppose that there are two or more disks in the disjoint union.
Suppose that $c$, say, meets two of these disks. 
Then all geodesics from $a$ to $b$ run through $c$. 
In particular, all geodesics from $a$ to $b$ in the edge rectangle $C$ meet $c$. 
Thus $C$ meets $c$, a contradiction.
Thus $H$ is a single disk.  

Breaking symmetry, we suppose that $a$, $b$, and $c$ appear in anticlockwise order around $\bdy H$.
Let $\rho_a$ be the rightmost geodesic from $b$ to $c$ given by \reflem{Interval}\refitm{Rightmost}.
Define $\rho_b$ and $\rho_c$ similarly. 
Since $a$ is to the left of $\rho_a$, we deduce that $\rho_a$ lies in $\bdy H$. 
Similarly, $\rho_b$ and $\rho_c$ lie in $\bdy H$. 

Since $b$ and $c$ are the cusps of $A$, we deduce that $\rho_a$ also lies in $\bdy A$. 
So $\rho_a$, and similarly $\rho_b$ and $\rho_c$, each consist of only two segments.  
We deduce that the only cusps meeting $\bdy H$ are $a$, $b$, and $c$.
Also the only segments in $\bdy H$ are those of $\rho_a$, $\rho_b$, and $\rho_c$.

Enumerating simple polygonal loops with at most six segments, 
which occur as the boundary of a convex set, we find two possibilities. 
One has four outward corners (one at a cusp) while the other has five (two at cusps). 
These are shown in \reffig{SixSides}. 
The latter does not contain an edge rectangle between two of its cusps. 
The former is the desired face rectangle.  
\end{proof}

\begin{figure}[htbp]
\subfloat[Four outward corners.]{
\includegraphics[width = 0.3\textwidth]{Figures/face_rect} 
}
\quad
\subfloat[Five outward corners.]{
\includegraphics[width = 0.3\textwidth]{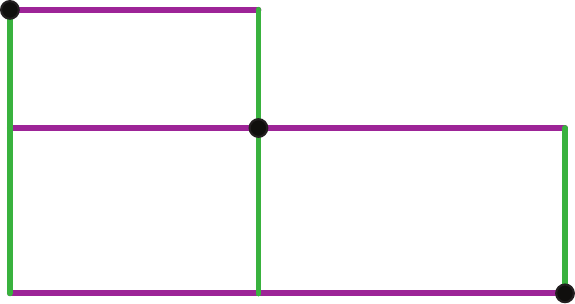} 
}

\caption{On the left we have one of the four possible face rectangles.  
Note that all four corners are outward.
On the right we have one of the four possible convex hulls with six segments in its boundary and five outward corners.}
\label{Fig:SixSides}
\end{figure}

Suppose that $P$, $Q$, and $R$ are edge, face, and tetrahedron rectangles in $\calL$, respectively. 
We say that $\cell(P)$ is the \emph{bottom edge of $\cell(Q)$} if $P$ and $Q$ west-east span each other. 
We similarly define what it means for $\cell(P)$ to be the \emph{bottom edge of $\cell(R)$}
and what it means for $\cell(Q)$ to be a \emph{bottom face of $\cell(R)$} (there are two).
We define the \emph{top} edges and faces similarly.
See \reffig{TetRect}; 
the shaded edge rectangle corresponds to the bottom edge of the corresponding faces and tetrahedron. 
Also, if $P$ is an edge rectangle then \reflem{OneUpOneDown} provides two tetrahedron rectangles: $R_P$ and $R^P$ the tetrahedron rectangles where $\cell(P)$ is the bottom and top edge of $\cell(R_P)$ and $\cell(R^P)$, respectively. 

\begin{lemma}
\label{Lem:TwoEdges}
Suppose that $P$ and $Q$ are edge rectangles of $\calL$.
Suppose that $P$ properly west-east spans $Q$. 
Set
\[
H = \hull(\Delta(P) \cup \Delta(Q))
\]
Then $R_P$ and $R^Q$ lie in $H$.
\end{lemma}

\begin{proof}
Suppose that $a$ and $b$ are the cusps of $Q$. 
Breaking symmetry, suppose that $a$ is south of and $b$ is north of $P$.
Let $\ell$ be the cusp leaf emanating from $a$ and crossing $P$.
Thus either $a$ is the southern cusp of $R_P$ or $\ell$ crosses the southern side of $R_P$.
The same holds for $b$, to the north of $P$. 
Thus $R_P$ lies in $H$. 
A similar argument proves that $R^Q$ lies in $H$. 
\end{proof}

\subsection{Three-balls}

For the remainder of \refsec{Convex} we fix $T$ a non-empty, finite, and face-connected collection of tetrahedra in $\veer(\calL)$.
Note that the union of the corresponding tetrahedron rectangles need not be convex.
In a slight abuse of notation we define
\[
\Delta(T) = \medcup_{t \in T} \Delta(\cell^{-1}(t))
\]
For the remainder of \refsec{Convex} we set $H = \hull(\Delta(T))$:
that is, $H$ is the hull of the set of all cusps of the rectangles associated to tetrahedra $t \in T$. 

\begin{definition}
\label{Def:Content}
The \emph{content} of $H$, denoted $\veer(H)$, is the set of cells $k$ of $\veer(\calL)$ so that the skeletal rectangle $\cell^{-1}(k)$ lies in $H$.
\end{definition}

We next build up the combinatorial structure of $\veer(H)$ in a sequence of lemmas and corollaries.
The last of these is \refprop{ThreeBall}, proving that $\veer(H)$ is a three-ball.
From this we deduce \refthm{ThreeSpace}. 

\begin{lemma}
\label{Lem:Hull}
We have the following. 
\begin{enumerate}
\item
\label{Itm:Grows}
$T \subset \veer(H)$. 
\item
\label{Itm:FiniteContent}
$\veer(H)$ is finite. 
\item
\label{Itm:FiniteDisk}
The hull $H$ is a finite-sided closed disk in $\calL$ (minus finitely many boundary points). 
\end{enumerate}
\end{lemma}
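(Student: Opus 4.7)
The plan is to establish the three parts in the order (1), (3), (2), since part (2) will draw on the finite-sidedness established in (3).

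For part (1), I would fix $t \in T$ and observe that the four cusps of $R(t)$, which lie on its four sides, belong to $\Delta(R(t)) \subset \Delta(R(T)) = C$. Given $p$ in the open rectangle $R(t)$, the two leaves through $p$ divide $R(t)$ into four sub-rectangles, and the four cusps distribute among the corresponding four sectors at $p$ according to their positions (north of, south of, east of, or west of $p$). A short case analysis, using that the north and south cusps lie on opposite sides of the horizontal leaf through $p$ and similarly east and west cusps lie on opposite sides of the vertical leaf, shows at least one pair of the four cusps must lie in non-adjacent sectors at $p$. By \refdef{Between}, $p$ is between this pair, so $p \in \hull(C) = H$; hence $R(t) \subset H$.

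For part (3), I would combine the face-connectedness of $T$ with the convexity of hulls. By \refcor{Disks}, $H$ is a disjoint finite union (meeting only at cusps) of finite-sided disks. Face-adjacent tetrahedron rectangles share a face rectangle, so $R(T)$ is path-connected; together with $R(T) \subset H$ from part (1) and the fact that distinct disk components of $H$ meet only at ideal cusps (hence are disjoint in $\calL$), this forces $R(T)$ to lie inside a single disk component $D$ of $H$. Every $c \in C$ has a cusp rectangle in $R(T) \subset D$, so $c \in \Delta(D)$, giving $C \subset D \cup \Delta(D)$. Applying \reflem{Convex}, together with the path-connectedness of each $\hull(a, b)$, then yields $\hull(a, b) \subset D$ for all $a, b \in C$; taking the union over pairs gives $H \subset D$, whence $H = D$.

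Part (2) is the main obstacle. The key step will be to show $\Delta(H)$ is finite. Since $H$ is a finite-sided disk by (3), its boundary consists of finitely many leaf segments meeting at finitely many corners, yielding finitely many boundary cusps. To rule out accumulation of interior cusps, I would apply the astroid lemma (\reflem{Astroid}\refitm{DoesNot}) at each corner of $\partial H$: cusps do not accumulate at interior points of the axis rays, and the boundedness of $H$ then bounds the number of exterior cusps of these staircases that can contribute cusp rectangles to $H$. Once $\Delta(H)$ is established to be finite, each $t \in \veer(H)$ is determined by its four cusps in $\Delta(H)$, since each side of $R(t)$ lies on the unique cusp leaf of its corresponding cusp; therefore $|\veer(H)| \le \binom{|\Delta(H)|}{4}$, which is finite. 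The hardest aspect is the finiteness of $\Delta(H)$, which will require careful use of the astroid lemma at every corner of $\partial H$ to rule out accumulation of cusps inside $H$.
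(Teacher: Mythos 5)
Your part (1) is essentially the paper's argument, just spelled out at the level of points: the paper says in one line that a tetrahedron rectangle is the (interior of the) convex hull of its own four cusps. Fine.

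Your part (3), however, takes a genuinely different route and contains a gap. You try to show directly that $H$ equals the single component $D$ containing the (connected) set $R(T)$, by arguing $\hull(a,b)\subset D$ for each pair $a,b\in C$. You invoke ``path-connectedness of each $\hull(a,b)$,'' but this is false in general: by the paper's own \reflem{Interval}\refitm{Disk}, the hull of two cusps may be a \emph{disjoint} finite union of disks meeting only at cusps, hence disconnected as a subset of $\calL$. Moreover, even if $\hull(a,b)$ were connected, \reflem{Convex} only yields $\hull(a,b)\subset H$, not $\hull(a,b)\subset D$; since $a$ may have cusp rectangles lying in several sectors (\reflem{Sectors}), the one in $R(T)\subset D$ need not coincide with the one in $\hull(a,b)$, so you have not shown $\hull(a,b)$ even meets $D$. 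The paper avoids this by forming a bipartite graph on disk components and shared cusps, showing it is a tree via \reflem{Convex}, and then showing each leaf of the tree must contain a tetrahedron rectangle (because a geodesic between two cusps cannot pass through the single attaching cusp twice, so one of its endpoints must be a non-attaching cusp of the leaf disk); two leaves would make $T$ disconnected. Your strategy is salvageable, but only by essentially re-introducing the paper's argument, not by the connectedness of $\hull(a,b)$.

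For part (2) you identify $|\Delta(H)|<\infty$ as the crux, which is right, but you propose to re-derive it with a fresh application of the astroid lemma at every corner of $\partial H$, and you lean on the ``boundedness of $H$'' (note $H$ is not bounded in $\RR^2$ since cusp leaves escape to infinity). This is unnecessary: the paper reads the finiteness of $\Delta(H)$ directly off \refcor{Disks} (a finite disjoint union of finite-sided disks), which already rests on \reflem{Interval}\refitm{Finite}, the astroid-lemma input. Once $\Delta(H)$ is finite, the paper, like you, concludes $\veer(H)$ is finite since a tetrahedron is determined by its four cusps. So part (2) is much shorter than you anticipate; the heavy lifting was already done in \refsec{Convex}, and redoing it here risks circularity and obscures the logical structure.
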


\begin{proof}
A tetrahedron rectangle is the (interior of the) convex hull of its cusps. 
This gives~\refitm{Grows}.

By \refcor{Cactus} we know that $H$ is a finite disjoint union of finite-sided disks.
Thus $\Delta(H)$ is finite.  
Thus $\veer(H)$ is finite, giving~\refitm{FiniteContent}.

Suppose that there is more than one disk component in the union given by \refcor{Cactus}. 
We form a bipartite graph as follows: 
we take one set of nodes for the disk components, 
another set of nodes for the cusps meeting two or more disks, 
and edges for a disk-cusp pair where the latter is a cusp of the former.
By \reflem{HullsConvex} this graph is a tree.
Any leaf of the tree gives a disk that contains at least one tetrahedron rectangle $\cell^{-1}(t)$ for some $t \in T$. 
Since there are at least two disks, the tree has at least two leaves. 
We deduce that $T$ is not face-connected, a contradiction. 
This gives~\refitm{FiniteDisk}. 
\end{proof}

\begin{remark}
\label{Rem:Circular}
Recall that the loom space $\calL$ is homeomorphic to $\RR^2$.
We equip the latter with its usual (anti-clockwise) orientation, and transport this to $\calL$. 
Recall that $H = \hull(\Delta(T))$.
The orientation of $\calL$ restricts to give an orientation of $H$. 

\reflem{Hull}\refitm{FiniteDisk} says that $H$ is homeomorphic to a closed disk minus finitely many boundary points. 
The boundary of $H$ is a collection of open intervals, each meeting exactly two cusps. 
The orientation of $H$ induces a circular ordering of these intervals and thus on $\Delta(H)$.
\end{remark}

\begin{lemma}
\label{Lem:Edge}
Suppose that $a$ and $b$ are distinct cusps of $\Delta(H)$; 
suppose that $a$ and $b$ span an edge $e$ of $\veer(H)$.
Then exactly one of the following occurs:
\begin{itemize}
\item
$a$ and $b$ are adjacent in the circular order on $\Delta(H)$ or
\item
there is a tetrahedron $t \in T$ so that $\cell^{-1}(t)$ properly spans $\cell^{-1}(e)$.
\end{itemize}
\end{lemma}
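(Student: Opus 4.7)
The plan is to prove both directions of the claimed dichotomy, interpreting ``$R(t)$ properly spans $R(e)$'' per \refdef{Spans} as ``properly west-east spans'' or ``properly south-north spans''. Throughout, fix orientations of $F^\calL$ and $F_\calL$ as in \refrem{Cardinal}, and without loss of generality take $R(e)$ to be a red edge rectangle with $a$ at its southwest ideal corner and $b$ at its northeast.

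I would first establish the easier implication: the existence of a spanning $t \in T$ forces $a$ and $b$ to be non-adjacent. Suppose $t \in T$ has $R(t)$ properly west-east spanning $R(e)$, the south-north case being symmetric. By \refdef{Spans}, $R(t)$'s horizontal extent strictly contains $R(e)$'s on both sides while $R(t)$'s vertical extent is contained in $R(e)$'s. Let $c_W$ and $c_E$ denote the cusps of $R(t)$ on its west and east sides. Then $c_W$ lies strictly west of $\ell^a$ and $c_E$ strictly east of $\ell^b$, and both belong to $\Delta(R(T)) \subseteq \Delta(H)$. In the cyclic order on $\bdy H$, these cusp positions force $c_W$ and $c_E$ onto different arcs of $\bdy H \setminus \{a, b\}$, so both arcs contain cusps other than $a, b$, witnessing non-adjacency.

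For the converse, assume $a$ and $b$ are not adjacent in $\Delta(H)$. Each of the two arcs of $\bdy H \setminus \{a, b\}$ then contains another cusp; pick $c^{SE}$ on the arc sweeping through the southeast region of $R(e)$, and $c^{NW}$ through the northwest region. A position analysis using convexity (\reflem{Convex}) together with the finite-sided disk structure of $H$ from \reflem{Hull}\refitm{FiniteDisk} shows that $c^{SE}$ must lie strictly south of the south side $m_a$ or strictly east of the east side $\ell^b$ of $R(e)$, and symmetrically for $c^{NW}$ relative to $m_b$ and $\ell^a$. In the sub-case where $c^{SE}$ is south of $m_a$ and $c^{NW}$ is north of $m_b$, one adapts the argument in \reflem{TwoEdges} to the hull $\hull(\{a, b, c^{SE}, c^{NW}\})$ to produce an edge $e'$ whose rectangle $R(e')$ is south-north spanned by $R(e)$, yielding a tet rect that properly south-north spans $R(e)$; the other two sub-cases yield tet rects properly west-east spanning $R(e)$ by an analogous construction.

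The main obstacle is the last step: promoting the spanning tet rect from an element of $\veer(H)$ to one realized by $T$ itself. I would handle this using \reflem{Ascend} together with face-connectivity of $T$. Starting from tets of $T$ whose rectangles contain $c^{SE}$ and $c^{NW}$ as cusps (such tets exist since $c^{SE}, c^{NW} \in \Delta(R(T))$ by tracing through the hull construction), I would ascend through a chain of face-adjacent tet rects, all spanning $R(e)$ in the chosen direction. The face-connectedness of $T$ and the single-disk structure of $H$ (\reflem{Hull}\refitm{FiniteDisk}) ensure the chain can be kept inside $T$: each face adjacency crosses a face rect of $H$, and membership in $T$ is preserved because omitting any link would disconnect the face-adjacency graph of $T$, contradicting hypothesis. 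The delicate part is verifying this inductive preservation carefully.
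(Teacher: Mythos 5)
Your proof of the ``spanning~$\Rightarrow$~non-adjacent'' direction is essentially right (and the paper leaves that direction implicit, as it is immediate once one notes $\Delta(R(T)) \subset \Delta(H)$). The gap is in the converse, and it is exactly where you flag it: the promotion step.

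Your plan is to first construct, from boundary cusps $c^{SE}$, $c^{NW}$ of $H$, a tetrahedron rectangle in $\veer(H)$ that spans $R(e)$, and then to ``promote'' this tetrahedron into $T$. But there is no reason such a tetrahedron should belong to $T$; the content $\veer(H)$ is typically much larger than $T$. Your proposed fix — ``membership in $T$ is preserved because omitting any link would disconnect the face-adjacency graph of $T$'' — is not a valid inference. Face-connectedness of $T$ guarantees the \emph{existence} of some $T$-path between any two tetrahedra of $T$; it does not constrain an arbitrary chain of spanning tetrahedra (produced via \reflem{Ascend}) to stay inside $T$. A spanning chain can freely exit and re-enter $T$. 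Also note that \reflem{Ascend} is stated for two tetrahedron rectangles with one spanning the other; it is not set up to produce a chain all of whose members span a fixed edge rectangle $R(e)$.

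The paper's argument is shorter and avoids the $\veer(H)$ detour entirely. Since $a$ and $b$ are non-adjacent, $H - R(e)$ has two components $P$ and $Q$, each containing some cusp of $\Delta(H) - \{a,b\}$, and then by \reflem{Convex} each contains a cusp $p$, $q$ of $\Delta(R(T)) - \{a,b\}$. Pick $r, s \in T$ with $p$ a cusp of $R(r)$ and $q$ a cusp of $R(s)$. Face-connectedness of $T$ gives a chain of tetrahedra in $T$ from $r$ to $s$. Let $t_k$ be the first tetrahedron in the chain whose rectangle is not contained in $P \cup R(e)$; then $R(t_k)$ meets both $P$ and $Q$, which forces it to properly span $R(e)$. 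This discrete intermediate-value argument lands directly in $T$ and sidesteps the promotion problem. If you want to salvage your outline, replace the whole promotion step with this chain-crossing argument; the explicit construction of a spanning tetrahedron via $c^{SE}$, $c^{NW}$ and \reflem{TwoEdges} is then unnecessary.

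One smaller point: you assert $c^{SE}, c^{NW} \in \Delta(R(T))$ ``by tracing through the hull construction,'' but a cusp of $H = \hull(\Delta(R(T)))$ need not be a cusp of $R(T)$. The step from a cusp of $\Delta(H)$ on the relevant side to a cusp of $\Delta(R(T))$ on that side is precisely the application of \reflem{Convex} in the paper's proof; it deserves to be made explicit rather than waved through.
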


\begin{proof}
Set $P = \cell^{-1}(e)$.
We orient the geodesics of $P$ from $a$ to $b$. 
Let $\lambda$ and $\rho$ be the leftmost and rightmost of these; 
note that each is connected. 

From \reflem{Hull}\refitm{FiniteDisk}, the fact that $\bdy P = \lambda \cup \rho$ has exactly two components, and the fact that $P$ is open, we deduce that $H - P$ has exactly two components.
We label these $K$ and $K'$, with $K$ to the left of $P$ and $K'$ to the right. 
Every cusp of $H$ is also a cusp of $K$ or of $K'$; 
only $a$ and $b$ are cusps of both.
Note that $K \cup P$ is convex because it is obtained by cutting $H$ (a convex set by \reflem{HullsConvex}) along $\rho$, a rightmost geodesic.
Similarly $K' \cup P$ is convex.

Suppose that $a$ and $b$ are not adjacent in the circular order on $\Delta(H)$.
Thus, each of $\Delta(K)$ and $\Delta(K')$ contains at least one cusp of $\Delta(H) - \{ a, b \}$. 
Since $K' \cup P$ is convex, the set $\Delta(K)$ contains at least one cusp from $\Delta(T) - \{a, b\}$, say $c$.
Similarly, $\Delta(K')$ contains a cusp $c'$ belonging to $\Delta(T) - \{a, b\}$.

Let $t$ and $t'$ be tetrahedra of $T$ so that $c$ and $c'$ are cusps of $R = \cell^{-1}(t)$ and $R' = \cell^{-1}(t')$, respectively.  
Since $T$ is face-connected, there is a sequence of tetrahedra 
\[
(t = t_0, t_1, t_2, \ldots, t_n = t')
\]
so that $t_i$ and $t_{i+1}$ share a face. 
Let $R_k = \cell^{-1}(t_k)$. 
Note that $R' = R_n$ is not contained in $K \cup P$, since it meets $c'$.
Let $k$ be the first index so that $R_k$ intersects $K'$.
By induction, for $i$ between $0$ and $k$, we have that $R_i$ intersects $K$. 
Thus $R_k$ intersects both $K$ and $K'$. 
Thus $R_k$ properly spans $P$. 
Thus $t_k$ is the desired tetrahedron of $T$. 
\end{proof}

\begin{wrapfigure}[6]{r}{0.39\textwidth}
\vspace{-8pt}
\centering
\labellist
\small\hair 2pt
 \pinlabel {$a$} [r] at 1 1
 \pinlabel {$m_x$} [t] at 139 1
 \pinlabel {$c$} [bl] at 100 67
 \pinlabel {$b$} [l] at 277 150
 \pinlabel {$\ell^x$} [l] at 277 75
 \pinlabel {$x$} [l] at 277 1
\endlabellist
\includegraphics[width=0.3\textwidth]{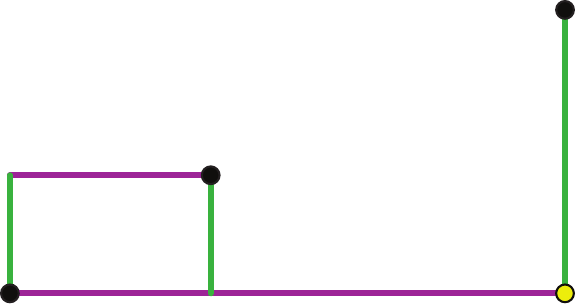}
\caption{}
\label{Fig:TooManyCoastals}
\end{wrapfigure}

We now give a partial converse to \reflem{Edge}.

\begin{lemma}
\label{Lem:Coastal}
Suppose that $a$ and $b$ are adjacent cusps in the circular order on $\Delta(H)$.
Then the interior of $\hull(a, b)$ is an edge rectangle in $\calL$.
\end{lemma}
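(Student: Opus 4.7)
The plan is to reduce the statement to showing that $\Delta(\hull(a,b)) = \{a, b\}$, and then conclude via \reflem{Interval}. Granting this reduction, the rightmost and leftmost geodesics from $a$ to $b$ pass through no cusps other than $a, b$; combined with the sign restrictions in \reflem{Interval}\refitm{Rightmost} (no right-turning corners on the rightmost, no left-turning corners on the leftmost), each geodesic reduces to an L-shaped path of exactly two leaf segments meeting at a single material corner. By \reflem{Interval}\refitm{Boundary}, $\bdy \hull(a,b)$ is the disjoint union of these two geodesics, bounding a rectangle with cusps at two opposite corners~--- an edge rectangle by \refdef{EdgeRect}.

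For the reduction, suppose for contradiction that $c \in \Delta(\hull(a,b))$ with $c \ne a, b$. By \reflem{Convex}, $\hull(a,b) \subset H$, so any cusp rectangle for $c$ lying in $\hull(a,b)$ also lies in $H$; hence $c \in \Delta(H)$. Thus $c$ is a third cusp of $\Delta(H)$, and we must derive a contradiction with the adjacency of $a$ and $b$ in the circular order on $\Delta(H)$.

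The approach, illustrated by \reffig{TooManyCoastals}, is to build a staircase whose axis cusps are $a$ and $b$. Fix orientations and, up to symmetry, let $m_a$ and $\ell^b$ be the cusp leaves at $a$ and $b$ on the short-arc side of $\bdy H$, meeting at a material point $x$; let $\stair(x)$ be the staircase at $x$ with axis rays $m_x = m_a$ (westbound, axis cusp $a$) and $\ell^x = \ell^b$ (northbound, axis cusp $b$). Since $c$ is between $a$ and $b$, the cusp $c$ lies in the NW sector of $x$, and a small rectangle realising $c$ as the opposite corner to $x$ sits inside $\hull(a,b)$; thus $c$ is an exterior cusp of $\stair(x)$. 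By \refcor{EdgesOnStair}, consecutive exterior cusps in the projection order (along $m_x$ and $\ell^x$) are joined by edge rectangles. These edge rectangles lie in $\hull(a,b) \subset H$ (by \reflem{Convex}), so their cusps all lie in $\Delta(H)$. Chaining from $a$ through $c$ to $b$ thus produces at least one cusp of $\Delta(H) - \{a, b\}$ on the short arc of $\bdy H$, contradicting adjacency.

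The main obstacle will be the careful setup of the material corner $x$ and verifying that the chain of edge rectangles traces the short arc of $\bdy H$. The former requires case analysis on the cusp types of $a$ and $b$ (NW, NE, SW, SE). For the latter, the convexity of the staircase (\refcor{StaircaseConvex}) together with \reflem{Convex} ensures that all the relevant rectangles and cusps remain inside $H$, after which the ordering from the projections $\pi_m$ and $\pi^\ell$ matches the cyclic order on $\bdy H$. The astroid lemma (\reflem{Astroid}), beyond its role in \refcor{EdgesOnStair}, guarantees that this chain of exterior cusps is finite and well-ordered.
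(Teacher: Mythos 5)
Your reduction to $\Delta(\hull(a,b)) = \{a,b\}$ is sound, and the setup --- a staircase at the material corner $x$ of the rightmost geodesic $\rho$, with $a$ and $b$ as axis cusps --- matches the paper's. But the contradiction step has a real gap, which you partly flag yourself but do not close.

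The problem is your final claim that the chain of edge rectangles from \refcor{EdgesOnStair} ``produces at least one cusp of $\Delta(H) - \{a,b\}$ on the short arc of $\bdy H$, contradicting adjacency.'' First, $c \in \Delta(H)$ alone does not contradict adjacency: $c$ could sit on the other arc of $\bdy H$, and nothing so far places $c$ (or any chain cusp) strictly between $a$ and $b$ in the circular order. Second, the edge rectangles joining consecutive exterior cusps of $\stair(x)$ are a priori only in the staircase; to put them in $H$ via \reflem{Convex} you would need their cusps already in $\Delta(H)$, which is exactly what you are trying to establish --- the argument is circular. Third, $a$ and $b$ are axis cusps of $\stair(x)$, not exterior cusps, so \refcor{EdgesOnStair} does not directly give a chain whose endpoints are $a$ and $b$.

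The paper sidesteps all of this by a different contradiction. It takes $c$ to be the \emph{first} cusp along the leftmost geodesic $\lambda$, so that $\hull(a,c)$ is already known to be an edge rectangle, and then splits on whether $a$ and $c$ are adjacent in $\Delta(H)$. If adjacent, the two edge rectangles pinned at $a$ force that no tetrahedron rectangle in $H$ can meet $a$, so $a \notin \Delta(H)$, a contradiction. If not adjacent, \reflem{Edge} produces a tetrahedron $t$ of $T$ with $R(t)$ properly spanning $R(e)$ (for the edge $e$ from $a$ to $c$); since $R(t) \subset H$ and $H$ is bounded near $x$ by the axis rays $\ell^x, m_x$, this $R(t)$ would have to cross one of them, a contradiction. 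The use of \reflem{Edge} to convert a hypothetical extra cusp into a concrete tetrahedron rectangle violating convexity is the missing mechanism in your argument; without it, tracking the circular order on $\bdy H$ does not by itself yield the contradiction.
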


We call the resulting edge, $e = \cell(\hull(a, b))$, a \emph{coastal edge} of $\veer(H)$.

\begin{proof}[Proof of \reflem{Coastal}]
The set $\hull(a, b)$ lies in the hull $H$ by \reflem{HullsConvex}.

Breaking symmetry, suppose that $b$ is anticlockwise of $a$ in $\bdy H$.
Applying \reflem{Interval}\refitm{Rightmost}, let $\rho$ be the rightmost geodesic from $a$ to $b$. 
Thus $\rho$ lies in $\bdy H$.
Recall also that $\rho$ has no right-turning corners.
By \reflem{AtMostOne}, the geodesic $\rho$ has at least two segments.
We deduce that $\rho$ has exactly two segments; 
these meet at one left-turning corner, say at $x \in \calL \cup \Delta(\calL)$.
However, $x$ is not a cusp by \reflem{AtMostOne}.
Let $\ell^x$ and $m_x$ be the leaves through $x$.
Breaking symmetry, we suppose that $\rho$ first runs along $m_x$ and then along $\ell^x$.
See \reffig{TooManyCoastals}.
Let $\stair(x)$ be the staircase at $x$ that meets both $a$ and $b$.

Let $\lambda$ be the leftmost geodesic from $a$ to $b$.
By \refcor{StaircaseConvex}, the staircase $\stair(x)$ contains $\lambda$.
If $\lambda$ has exactly two segments then the lemma is proved. 
For a contradiction, suppose that $\lambda$ has more than two segments.
By \reflem{Interval}\refitm{Rightmost} there is at least one cusp along $\lambda$. 
Let $c$ be the first such.
Thus $\hull(a, c)$ is the closure of an edge rectangle.

If $a$ and $c$ are adjacent in the circular order on $\Delta(H)$, then the leftmost geodesic from $a$ to $c$ is again in $\bdy H$. 
We deduce that no tetrahedron rectangle in $H$ meets $a$; see \reffig{TooManyCoastals}.
In particular, $a$ does not lie in $\Delta(T)$.
Also $a$ does not meet the hull of any pair of cusps in $\Delta(T)$.
Thus $a$ does not meet $H$, a contradiction.

Therefore $a$ and $c$ are not adjacent. 
Applying \reflem{Edge}, we find a tetrahedron $t$ so that $\cell^{-1}(t)$ properly spans the interior of $\hull(a, c)$.
Thus $\cell^{-1}(t)$ crosses one of $\ell^x$ or $m_x$. 
Either is a contradiction. 
Again, see \reffig{TooManyCoastals}.
\end{proof}

\begin{definition}
We say that $e$, an edge of $\veer(H)$, is a \emph{lower edge for $H$} if it has the following property.
For any edge $e'$ of $\veer(H)$ the edge rectangle $\cell^{-1}(e')$ does not properly west-east span $\cell^{-1}(e)$.

We say that a face $f$ of $\veer(H)$ is a \emph{lower face for $H$} if all three edges of $f$ are lower edges for $H$. 
\end{definition}

The definition implies the following. 

\begin{lemma}
\label{Lem:NoLink}
Lower edges do not link each other with respect to the given circular order on $\Delta(H)$.  \qed
\end{lemma}

From \reflem{Coastal}, we deduce the following.  

\begin{corollary}
\label{Cor:CoastalLower}
Coastal edges are lower edges for $H$.  \qed
\end{corollary}

We now show that lower edges give rise to tetrahedra in $\veer(H)$. 

\begin{corollary}
\label{Cor:BottomEdgeTet}
Suppose that $e$ is a non-coastal, lower edge for $H$.  
Then the tetrahedron $t$, having $e$ as its bottom edge, lies in $\veer(H)$.
\end{corollary}

\begin{proof}
Suppose that $e$ is the given lower edge.
Let $P = \cell^{-1}(e)$.
Define $U(e)$ to be those tetrahedra $t'$ so that $R' = \cell^{-1}(t')$ properly south-north spans $P$.  
Since $e$ is non-coastal, applying \reflem{Edge}, the set $U(e)$ is non-empty. 

Suppose that $t'$ is any tetrahedron in $U(e)$.
Again, let $R'$ be the corresponding tetrahedron rectangle. 
Since $e$ is lower, $R'$ does not properly west-east span $P$. 
Thus $R'$ instead properly south-north spans $P$.
Let $e'$ be the upper edge of $t'$. 
Let $P' = \cell^{-1}(e')$. 
We now apply \reflem{TwoEdges} to the edge rectangles $P$ and $P'$. 
The resulting tetrahedron $t = \cell(R_P)$ gives the result. 
\end{proof}

\begin{lemma}
\label{Lem:Landscape}
The lower edges and faces for $H$ form a triangulated disk in $\veer(H)$. 
\end{lemma}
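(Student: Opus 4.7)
The plan is to project $|L_H|$ onto $H$ and argue that this projection is a simplicial homeomorphism.

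First, I would identify the boundary. By \refcor{Lower} every coastal edge is lower, and by \reflem{Coastal} together with \reflem{Hull}\refitm{FiniteDisk} the coastal edges correspond bijectively to the adjacent pairs of cusps in the circular order on $\Delta(H)$; their edge rectangles tile $\bdy H$. Hence the coastal edges form an embedded cycle in $L_H$ whose realization covers $\bdy H$.

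Next, for each tetrahedron $t \in \veer(H)$ I would verify that both of its bottom faces are lower faces for $H$. If some edge $\epsilon$ of such a bottom face $f$ were not lower, there would be an edge $e' \in \veer(H)$ with $R(e')$ properly west-east spanning $R(\epsilon)$; combining $R(e') \subset H$ with \reflem{OneUpOneDown} applied to $\epsilon$ and with the rectangle structure of $R(t)$ from \reflem{TetFaceEdge}, this would force a tetrahedron rectangle distinct from $R(t)$ yet containing $R(\epsilon)$ in a way incompatible with the construction of $t$. Dually, by \refcor{BottomEdgeFaceTet} every non-coastal lower edge $e$ is the bottom edge of a unique tetrahedron $t_e \in \veer(H)$, whose two bottom faces supply exactly the two lower faces incident to $e$; for a coastal edge only the interior side of $\bdy H$ is available, giving exactly one incident lower face.

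I would then define the projection $|L_H| \to H$ by sending each face $f$ of $L_H$ to its closed face rectangle $\overline{R(f)} \subset H$ and each edge $e$ to $\overline{R(e)}$, gluing along shared lower edges and cusps. Distinct face rectangles have disjoint interiors (one face rectangle cannot strictly contain another), so the projection is injective on each open cell and consistent on the gluings. Surjectivity follows from a downward descent: any $p \in H$ lies in some $\overline{R(t)}$ for $t \in \veer(H)$, and at least one of the two bottom face rectangles of $t$ (or the bottom edge rectangle itself) contains $p$; if those faces are not yet lower, iterate using an edge that witnesses the failure, terminating by the finiteness of $\veer(H)$ from \reflem{Hull}\refitm{FiniteContent}. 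Since $H$ is a finite-sided disk, a continuous bijection from the finite $2$-complex $|L_H|$ onto $H$ is a homeomorphism, whence $L_H$ is a triangulated disk.

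The main obstacle will be the second step: rigorously excluding an interloping edge $e' \in \veer(H)$ whose rectangle properly west-east spans a non-bottom edge of a bottom face of some $t \in \veer(H)$. I expect this to require a careful combination of \reflem{Convex} (to keep $e'$ inside $H$), the uniqueness statements in \reflem{OneUpOneDown}, the incidence enumeration in \reflem{TetFaceEdge}, and possibly the astroid lemma (\reflem{Astroid}) to control the finitely many candidate spanning rectangles inside $H$.
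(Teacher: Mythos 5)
Your overall strategy---projecting $|L_H|$ onto $H$ and showing the lower faces tile it---is geometrically appealing and genuinely different from the paper's combinatorial argument (the paper shows instead that every chordless simple cycle in the complex of lower edges and faces has length three and bounds a face, then concludes). Unfortunately your central step is false.

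You assert that for every tetrahedron $t \in \veer(H)$, both bottom faces of $t$ are lower faces for $H$. This fails whenever one tetrahedron is stacked above another. Concretely, suppose $t, t' \in \veer(H)$ with the top edge of $t'$ equal to the bottom edge $e$ of $t$ (this is exactly what happens when you flip across $e$, as in the proof of \refprop{ThreeBall}). The bottom edge $e''$ of $t'$ has $R(e'')$ properly west-east spanning $R(e)$, and $e'' \in \veer(H)$; hence $e$ is not a lower edge for $H$, so neither bottom face of $t$ is a lower face. Your proposed contradiction argument against such an $e'$ does not actually reach a contradiction, because this configuration is not only possible but ubiquitous: the lower faces are meant to be a single bottom layer, not the union of all bottom faces. (Note that \refcor{BottomEdgeFaceTet} runs in the opposite direction---it says lower edges and faces are bottom edges and faces of tetrahedra in $\veer(H)$, not conversely---and even the weaker statement that the bottom faces of $t_e$ are lower when $e$ is a lower edge requires an argument, since their equatorial edges must also be checked.)

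Two further steps are under-justified. Your surjectivity argument assumes every point of $H$ lies in $\overline{R(t)}$ for some $t \in \veer(H)$; this is very close to what the lemma is meant to establish and is not given. And disjointness of interiors of lower face rectangles does not follow from ``one cannot strictly contain another'': two face rectangles can overlap without nesting, so you need to rule out partial overlaps among lower faces specifically. The paper sidesteps all three difficulties by working with the combinatorics of chordless cycles in the lower subcomplex and hulls $H_\gamma$ of such cycles (via \reflem{Ear}, \reflem{Edge}, \refcor{Lower}), rather than a direct tiling argument.
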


\begin{proof}
Let $L(H)$ be the subcomplex of $\veer(\calL)$ consisting of the lower edges and faces for $H$. 
By \refcor{CoastalLower} all coastal edges of $V(H)$ belong to $L(H)$.
Since $T$ is non-empty, the set $\Delta(H)$ has at least four elements. 
From this and \reflem{Hull}\refitm{FiniteDisk} we deduce that $L(H)$ contains a simple edge cycle.

Suppose that $\ell$ is any simple edge cycle in $L(H)$.
By Lemmas~\ref{Lem:OneEdgeTwoCusps} and~\ref{Lem:TwoCuspsAtMostOneEdge} the edge cycle $\ell$ has length at least three.
If $\ell$ has length exactly three then, by \reflem{ThreeEdges} and by the definition of lower faces, there is a face $f$ contained in $L(H)$ spanning $\ell$.  
If $\ell$ has length greater than three then we must show that $\ell$ has a chord in $L(H)$.
This and induction then proves the lemma.

Let $H_\ell$ be the hull of the cusps of $\ell$. 
Let $e'$ be any edge of $\ell$. 
Let $P' = \cell^{-1}(e')$ be the corresponding edge rectangle. 
Thus $P'$ lies in $H_\ell$ and separates $\calL$. 
All edges of $\ell \subset L$ are lower.
Thus their edge rectangles do not properly span each other.
Therefore all cusps of $\Delta(H_\ell) - \Delta(P')$ meet a common component of $\calL - P'$.
Orient $e'$ so that this component is to the left of $P'$.
Let $\rho_{e'}$ be the rightmost geodesic in $P'$.
Since $H_\ell$ is convex, $\rho_{e'}$ lies in the boundary of $H_\ell$.

\begin{claim}
\label{Clm:HLoopDisk}
$H_\ell$ is a disk and $\bdy H_\ell = \sqcup_e \rho_e$ where the union ranges over the edges of $\ell$.
\end{claim}

\begin{proof}
Note that $\ell$ is simple: that is, it enters and exits each of its cusps exactly once.
Thus $\sqcup_e \rho_e$ meets all cusps of $\ell$; 
for each cusp exactly one arc enters the neighbourhood of that cusp and exactly one arc leaves.
Since $\sqcup_e \rho_e$ lies in $\bdy H_\ell$ the arcs of $\sqcup_e \rho_e$ do not intersect in $\calL$.
Thus $\sqcup_e \rho_e$ is a polygonal loop in $\calL$.
By the Jordan curve theorem, $\sqcup_e \rho_e$ bounds a disk $D$ in $\calL$.
By \refcor{Cactus} the hull $H_\ell$ is a finite union of finite-sided disks, meeting at cusps.  
Thus $D$ is exactly one of these disks.
The claim follows.
\end{proof}

Recall that the simple edge loop $\ell$ has length at least four. 
We now find a chord of $\ell$ in $L(H)$.

Applying \reflem{Ear} to $\bdy H_\ell$ we obtain an (open) rectangle $R \subset H_\ell$ with three consecutive sides $s$, $s'$, and $s''$ contained in $\bdy H_\ell$ and otherwise disjoint from $\bdy H_\ell$.
By \refclm{HLoopDisk} there is a cusp $b$ of $\ell$ meeting $s'$. 
Let $a$ and $c$ be the (distinct) cusps of $\bdy H_\ell$ immediately clockwise and anticlockwise of $b$, respectively.
There are now two cases as $a$ and $c$ do or do not span an edge of $\veer(\calL)$.

First suppose that there is an edge rectangle $P = \cell^{-1}(e)$ with cusps at $a$ and $c$.
See \reffig{BdyHLoopEarRectangle} for the three possible configurations (up to horizontal and vertical reflections).

\begin{figure}[htbp]
\centering
\subfloat[]{
\labellist
\small\hair 2pt
 \pinlabel {$a$} [b] at 125 193
 \pinlabel {$b$} [r] at 0 85
 \pinlabel {$c$} [l] at 273 0
 \pinlabel {$P$} at 200 120
\endlabellist
\includegraphics[width=0.27\textwidth]{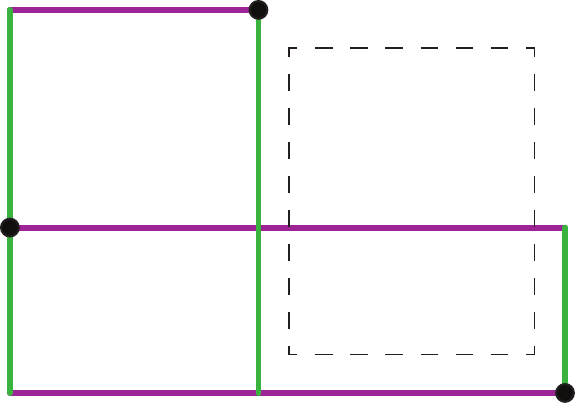}
\label{Fig:BdyHLoopEarRectangleBOnSide}
}
\quad
\subfloat[]{
\labellist
\small\hair 2pt
 \pinlabel {$a$} [b] at 125 193
 \pinlabel {$b$} [r] at 0 3
 \pinlabel {$c$} [l] at 273 80
 \pinlabel {$P$} at 200 136
\endlabellist
\includegraphics[width=0.27\textwidth]{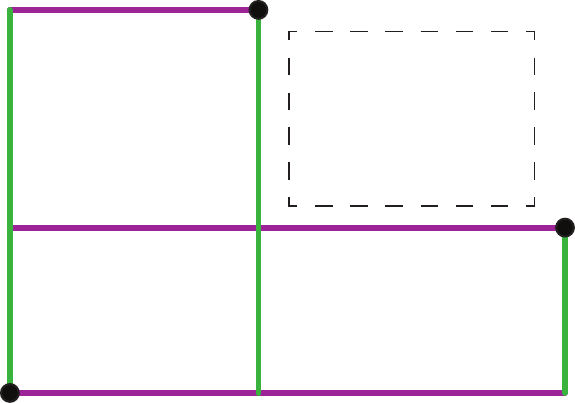}
}
\quad
\subfloat[]{
\labellist
\small\hair 2pt
 \pinlabel {$a$} [b] at 5 193
 \pinlabel {$b$} [br] at 120 8
 \pinlabel {$c$} [l] at 273 80
  \pinlabel {$P$} at 180 136
\endlabellist
\includegraphics[width=0.27\textwidth]{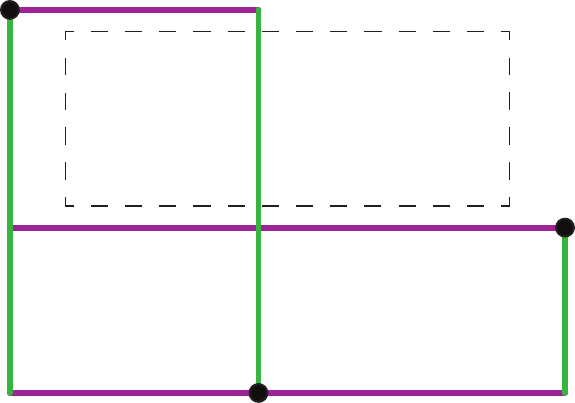}
}
\caption{}  
\label{Fig:BdyHLoopEarRectangle}
\end{figure}

Since $\ell$ has length greater than three, we deduce that $e$ is not an edge of $\ell$. 
If $e$ is lower for $H$ then we have found a chord for $\ell$ and we are done.
Suppose instead that $e$ is not lower for $H$. 
Thus there is some edge $e'$ of $\veer(H)$ so that $P' = \cell^{-1}(e')$ properly west-east spans $P$. 
Repeating the argument if needed, we may also assume that $e'$ lies in $L(H)$.
Then $P'$ cannot west-east span any edge rectangle of $\cell^{-1}(\ell)$ as these are all in $L(H)$.
The only possible configuration is shown in \reffig{BdyHLoopEarRectangleBOnSide}, where $P'$ has a cusp at $b$.
By \reflem{NoLink}, the edge $e'$ does not link any edge of $\ell$.
Thus $e'$ is a chord for $\ell$, a contradiction.

Next suppose that there does not exist an edge rectangle with cusps at $a$ and $c$.
See \reffig{BdyHLoopEarCusps} for the three possible configurations (up to horizontal and vertical reflections).

\begin{figure}[htbp]
\centering
\subfloat[]{
\labellist
\small\hair 2pt
 \pinlabel {$a$} [b] at 125 193
 \pinlabel {$b$} [r] at 0 85
 \pinlabel {$c$} [l] at 273 0
 \pinlabel {$d$} [bl] at 169 153
\endlabellist
\includegraphics[width=0.27\textwidth]{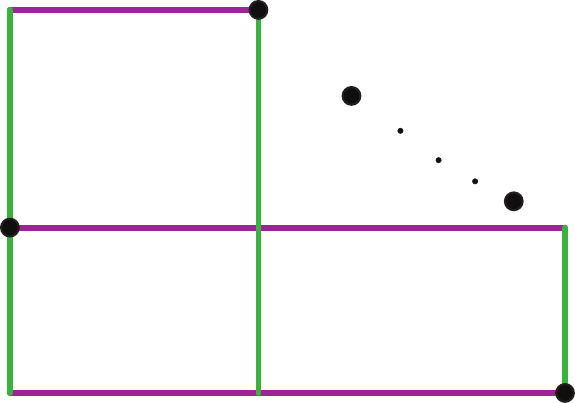}
}
\quad
\subfloat[]{
\labellist
\small\hair 2pt
 \pinlabel {$a$} [b] at 125 193
 \pinlabel {$b$} [r] at 0 3
 \pinlabel {$c$} [l] at 273 80
 \pinlabel {$d$} [bl] at 169 153
\endlabellist
\includegraphics[width=0.27\textwidth]{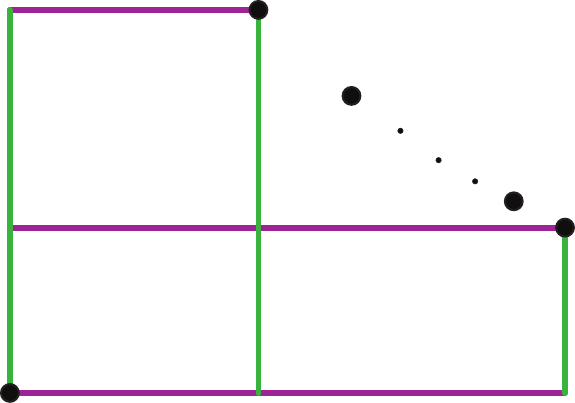}
}
\quad
\subfloat[]{
\labellist
\small\hair 2pt
 \pinlabel {$a$} [b] at 5 193
 \pinlabel {$b$} [br] at 120 8
 \pinlabel {$c$} [l] at 273 80
 \pinlabel {$d$} [bl] at 169 153
\endlabellist
\includegraphics[width=0.27\textwidth]{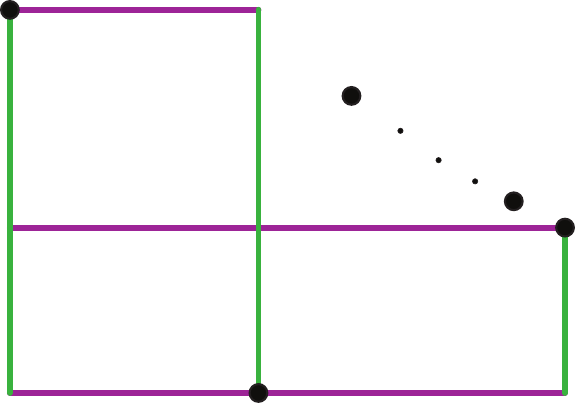}
}
\caption{}  
\label{Fig:BdyHLoopEarCusps}
\end{figure}

In all cases, the rightmost geodesic from $a$ to $c$ consists of exactly two segments. 
The leftmost geodesic from $a$ to $c$ necessarily meets at least one cusp, say $d$. 
The cusp $d$ is both an exterior cusp for the staircase based at $b$ and is a cusp of $\Delta(H)$.
Thus there is an edge rectangle $P' = \cell^{-1}(e')$ with cusps at $b$ and $d$. 
In all three cases, any edge rectangle $P''$ that strictly west-east spans $P'$ necessarily strictly west-east spans the edge rectangle $\hull(a,b)$.
However, $\cell(\hull(a,b))$ is an edge of $\ell$, thus lower for $H$.
Thus $P''$ is not an edge rectangle of $H$.
We deduce that the edge $e'$ is lower for $H$.
Thus $e'$ is a chord for $\ell$, as desired.
\end{proof}

\begin{proposition}
\label{Prop:ThreeBall}
Suppose that $T$ is a finite, face-connected, non-empty collection of tetrahedra in $\veer(\calL)$.  
Let $H = \hull(\Delta(T))$.  
Then the realisation of $\veer(H)$ is a closed three-ball (minus finitely many boundary points).
\end{proposition}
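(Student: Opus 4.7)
The plan is to argue by induction on $n = |\veer(H)|$, the number of tetrahedra in the content of $H$. In the base case $n = 1$, the content consists of a single ideal tetrahedron, whose realisation is a three-ball.

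For the inductive step I would first identify $\partial|\veer(H)|$ as a triangulated 2-sphere. From \reflem{Landscape} I get the lower landscape $L_H$, a triangulated disk whose boundary is the cyclic sequence of coastal edges of $H$. A parallel argument --- interchanging the roles of ``west-east spans'' and ``south-north spans'' throughout the definitions of lower edge, lower face, and the proof of \reflem{Landscape} --- gives the upper landscape $U_H$, also a triangulated disk with the same boundary. With these in hand, \reflem{FaceTet} together with (the upper analogue of) \refcor{BottomEdgeFaceTet} shows that every unpaired 2-face of a tetrahedron in $\veer(H)$ is a lower or upper face of some tetrahedron in $\veer(H)$, and conversely. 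Hence $\partial|\veer(H)| = |L_H| \cup_{\partial} |U_H|$, a triangulated 2-sphere.

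Next I would locate a peelable configuration. Since $L_H$ is a triangulated disk with at least two triangles (for $n > 1$), it admits an ear $f$: a triangle with two edges on $\partial L_H$. Let $a$ be the cusp of $f$ incident to both of these coastal edges, let $T_a$ be the collection of tetrahedra of $\veer(H)$ incident to $a$, and let $t_f \in T_a$ have $f$ as a bottom face (via \refcor{BottomEdgeFaceTet}). Set $H' = \hull(\Delta(H) \setminus \{a\})$. The key combinatorial claim is that $\veer(H') = \veer(H) \setminus T_a$: every tetrahedron of $\veer(H)$ not incident to $a$ has all of its cusps in $\Delta(H) \setminus \{a\}$, hence sits inside $H'$. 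Applying the inductive hypothesis (to a face-connected subcollection whose content is $\veer(H')$) gives that $|\veer(H')|$ is a three-ball. The subcomplex $|T_a|$ is a wedge of tetrahedra fanning around $a$, itself a three-ball, and attaches to $|\veer(H')|$ along the disk in $\partial|\veer(H')|$ consisting of the faces of $T_a$ that become interior to $|\veer(H)|$. Gluing one three-ball to another along a boundary disk produces a three-ball, closing the induction.

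The principal obstacle is the local analysis at $a$: proving that $|T_a|$ is a three-ball and meets $|\veer(H')|$ in a disk. By the astroid lemma (\reflem{Astroid}) applied at the staircase based at $a$, the collection $T_a$ is finite, and the ear condition combined with the veering and taut data should impose enough regularity on the fan of tetrahedra at $a$ for $|T_a|$ to be a ball. One can likely simplify matters by refining the choice of ear --- for instance, choosing $f$ so that $a$ is simultaneously an ear tip of $U_H$, which should force $T_a = \{t_f\}$ via the veering colouring --- so that the induction removes a single tetrahedron at each step, whose attached subcomplex (one, two, or three faces of $\partial t_f$) is automatically a disk.
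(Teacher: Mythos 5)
Your proposal takes a genuinely different route from the paper's. The paper argues by a sweep-out: it starts from the lower landscape $L_0 = L_H$ of \reflem{Landscape} and produces a sequence of triangulated disks $L_{k+1}$, each obtained from $L_k$ by flipping across a tetrahedron of $\veer(H)$; Claims~\ref{Clm:Below},~\ref{Clm:Layering} and~\ref{Clm:Thick} then show that this process exhausts $\veer(H)$ with no separating edges. Each flip is an elementary expansion, so the three-ball structure is produced directly, with no appeal to two-sphere recognition and no induction on the number of tetrahedra. You instead propose a peeling induction: remove the tetrahedra $T_a$ incident to an ear tip $a$ of $L_H$, apply the inductive hypothesis to the remainder, and glue back.

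Several steps you rely on are not established, and at least one is false as stated. You need $a \notin \Delta(H')$ (where $H' = \hull(\Delta(H)\setminus\{a\})$) for the asserted identity $\veer(H') = \veer(H)\setminus T_a$ to make sense, and this requires an argument. To apply the inductive hypothesis you must produce a finite, face-connected, non-empty $T'$ with $\hull(\Delta(R(T')))=H'$; the natural candidate $T' = \veer(H)\setminus T_a$ need not be face-connected, and its cusp hull need not equal $H'$. The claim that $|T_a|$ is a three-ball meeting $|\veer(H')|$ in a boundary disk is, as you note, the crux, and your sketch does not address it. Your fallback simplification is not sound: $L_H$ and $U_H$ share only a boundary circle and may place their ears at disjoint sets of boundary cusps, so a common ear tip need not exist; and even if $a$ were an ear tip of both, the column of tetrahedra of $\veer(H)$ over $a$ can have length greater than one, so $T_a$ need not be a singleton. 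Finally, identifying $\partial|\veer(H)|$ with $|L_H| \cup_\partial |U_H|$ gives a two-sphere boundary but does not by itself yield a three-ball; supplying the missing topological input is precisely what the paper's explicit sweep-out accomplishes.
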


\begin{proof}
\reflem{Landscape} gives us a triangulated disk $L(H)$, whose edges and faces are lower for $H$ and whose boundary consists of the coastal edges for $H$.
Set $L_0 = L(H)$.
We now use induction to obtain a sequence of triangulated disks $L_k \subset \veer(H)$, with $\bdy L_k = \bdy L_0$.

We say that a non-boundary edge $e$ of $L_k$ is \emph{flippable} if both of its adjacent faces in $L_k$, say $f$ and $f'$, 
have $e$ as their bottom edge.  
(That is, all three of $\cell^{-1}(e)$, $\cell^{-1}(f)$, and $\cell^{-1}(f')$ west-east span each other.  See \reffig{TetRect}.)

If $L_k$ has a flippable edge then let $e_k$ be one such. 
Consulting \reffig{TetRect}, we see that $f$ and $f'$ are the bottom faces of a tetrahedron $t_k$ in $\veer(H)$.
Let $g$ and $g'$ be the top faces of $t_k$. 
We define the result of \emph{flipping $L_k$ across $e_k$} to be the triangulated disk 
\[
L_{k+1} = (L_k - (f \cup f')) \cup (g \cup g')
\]

On the other hand, if $L_k$ has no flippable edge then the induction is complete. 

\begin{claim}
\label{Clm:Below}
Suppose that $e'$ is an edge of $\veer(H)$ that is not equal to $e_j$, for any $j < k$.
Then either $e'$ lies in $L_k$ or there is an edge $e$ of $L_k$ which properly west-east spans $e'$.
\end{claim}

\begin{proof}
For $k = 0$ this follows from the definition of $L_0$.
Suppose by induction that the claim holds at stage $k$.
Suppose that $e'$ is an edge of $\veer(H)$ that is not $e_j$ for any $j < k + 1$.
Suppose that no edge $e$ of $L_{k+1}$ properly west-east spans $e'$. 

Suppose that $e'$ lies in $L_{k}$.
Since $e' \neq e_{k}$, we deduce that $e'$ lies in $L_{k+1}$.  
Thus, in this case we are done. 

Suppose instead that $e'$ does not lie in $L_{k}$.
By induction, there is some edge $e$ of $L_{k}$ so that $\cell^{-1}(e)$ properly west-east spans $\cell^{-1}(e')$.
If $e \neq e_{k}$ then $e$ lies in $L_{k+1}$, contrary to assumption. 
Thus $e = e_{k}$ is the only edge of $L_{k}$ whose rectangle properly west-east spans $\cell^{-1}(e')$;
we deduce that the equatorial edges of $t_{k}$ do not give properly spanning rectangles. 
Thus $e'$ is the top edge of $t_{k}$. 
Thus $e'$ lies in $L_{k+1}$, and we are done. 
\end{proof}

\begin{claim}
\label{Clm:Layering}
For every tetrahedron $t'$ of $\veer(H)$, there is an $n$ so that $t' = t_n$.
\end{claim}

\begin{proof}
Let $D(t')$ be the collection of tetrahedra $s$ in $\veer(H)$ which have $\cell^{-1}(s)$ west-east spanning $\cell^{-1}(t')$.
The set $D(t')$ is finite and partially ordered by the spanning relation. 
Note that $t'$ lies in $D(t')$.

Suppose that $\cell^{-1}(e)$ is an edge rectangle in $H$ that west-east spans $\cell^{-1}(t')$.
Let $e'$ be the top edge of $t'$.
Thus $\cell^{-1}(e)$ properly west-east spans $\cell^{-1}(e')$.
By \reflem{TwoEdges}, there is a tetrahedron $t$ so that $e$ is the bottom edge of $t$ and $\cell^{-1}(t)$ lies in the convex hull of the cusps of $\cell^{-1}(e)$ and $\cell^{-1}(e')$.
By \reflem{HullsConvex}, the tetrahedron $t$ lies in $D(t')$.

Recall that $t_k$ is the tetrahedron lying between the triangulated disks $L_k$ and $L_{k+1}$.
Define $D_k(t') = D(t') - \{t_0, t_1, \dots, t_{k-1}\}$.
If $D_k(t')$ is empty then we are done.
Otherwise, suppose that $s' \in D_k(t')$ has a bottom edge which is not flippable in $L_k$. 
Thus some edge $e'$ of $s'$, other than the top edge of $s'$, is not contained in $L_k$.
By \refclm{Below}, there is an edge $e$ of $L_k$ that properly west-east spans $e'$.
By the previous paragraph, $s'$ was not a minimum of $D_k(t')$.
That is, all minima of the partial order on $D_k(t')$ have bottom edges which are flippable in $L_k$.

Any flippable edge in $L_k$, if not removed, remains flippable in $L_{k+1}$.
Thus there is a $k' > k$ so that $D_{k'}(t')$ has fewer elements than $D_k(t')$.
\end{proof}

\begin{claim}
\label{Clm:Thick}
For every non-coastal edge $e$ of $L_0$, there is an $n$ so that $e = e_n$.
\end{claim}

\begin{proof}
This follows from \refcor{BottomEdgeTet} and \refclm{Layering}.
\end{proof}

\refclm{Layering} and \reflem{Hull}\refitm{FiniteContent} imply that the realisation of $\veer(H)$ is a finite collection of finitely triangulated three-balls, perhaps intersecting at separating edges. 
\refclm{Thick} implies that there are no separating edges. 
Thus the realisation $\veer(H)$ is a closed three-ball (minus finitely many boundary points).
\end{proof}

\begin{theorem}
\label{Thm:ThreeSpace}
Suppose that $\calL$ is a loom space.  
Then the realisation of its induced triangulation $\veer(\calL)$ is homeomorphic to $\RR^3$.
\end{theorem}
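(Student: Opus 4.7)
The plan is to exhibit $|\veer(\calL)|$ as an increasing union of compact three-balls with properly nested closures, and then to invoke M.~Brown's monotone union theorem to conclude $|\veer(\calL)| \homeo \RR^3$.

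Using \refcor{Countable}, enumerate the tetrahedra of $\veer(\calL)$ as $t_1, t_2, \ldots$. Using \refprop{FaceConnected}, inductively build an increasing sequence of finite, face-connected, non-empty sets $T_1 \subset T_2 \subset \cdots$ of tetrahedra, with $t_n \in T_n$ for every $n$: take $T_1 = \{t_1\}$, and at stage $n+1$ add $t_{n+1}$ together with a finite face-connecting path. Set $H_n = \hull(\Delta(R(T_n)))$ and $V_n = \veer(H_n)$. By \refprop{ThreeBall} each $|V_n|$ is a three-ball; by \reflem{Hull}\refitm{Grows} we have $T_n \subset V_n$, so every tetrahedron of $\veer(\calL)$ lies in some $V_n$.

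To arrange proper nesting $|V_n| \subset \operatorname{int} |V_{n+1}|$, I would interleave a \emph{padding step}. Given $V_n$, let $T_n^+$ consist of all tetrahedra of $V_n$, together with every tetrahedron of $\veer(\calL)$ that either shares a face with some tetrahedron in $V_n$ or contains an edge of $V_n$. By \reflem{Hull}\refitm{FiniteContent}, \reflem{FaceTet}, and \refcor{EdgeFace}, the set $T_n^+$ is finite; face connectedness follows from the fact that $V_n$ is face connected (its realisation is a three-ball) together with the walk-around-the-edge argument. Setting $V_n^+ = \veer(\hull(\Delta(R(T_n^+))))$, \refprop{ThreeBall} produces a three-ball $|V_n^+| \supset |V_n|$. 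Alternating the padding with the original construction, and padding enough times at each stage to still include $t_{n+1}$, yields a cofinal sequence of three-balls with the desired properties.

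The main obstacle is verifying the interior containment $|V_n| \subset \operatorname{int} |V_n^+|$. By \reflem{Hull}\refitm{Grows}, $T_n^+ \subset V_n^+$. Hence every face of $\partial |V_n|$ is shared with a tetrahedron of $V_n^+$, and the entire cycle of tetrahedra around every edge of $\partial |V_n|$ lies in $V_n^+$. Since the zero-skeleton is removed from $|\veer(\calL)|$, the link (in $|\veer(\calL)|$) of every point of $\partial |V_n|$ lies inside $|V_n^+|$, so $|V_n|$ lies in the topological interior of $|V_n^+|$. Brown's monotone union theorem then yields $|\veer(\calL)| \homeo \RR^3$.
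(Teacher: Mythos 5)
Your proposal is correct and takes essentially the same route as the paper: enumerate the tetrahedra, exhaust by finite face-connected families, take hulls, get closed three-balls via \refprop{ThreeBall}, and finish with Brown's monotone-union theorem.

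The one place you diverge is the ``padding'' step. The paper simply says ``taking interiors'' and appeals to Brown; it leaves implicit the verification that the open balls $\operatorname{int}|\veer(H_n)|$ actually cover $|\veer(\calL)|$. Your padding produces the stronger condition $|V_n| \subset \operatorname{int}|V_{n+1}|$, which certainly yields coverage, and your argument for it is sound: the removal of the zero-skeleton means every point of $\partial|V_n|$ lies in the relative interior of a face or an edge, so it suffices that every face-neighbour and every tetrahedron around each boundary edge lands in the next stage, which your $T_n^+$ guarantees (finiteness via \reflem{Hull}\refitm{FiniteContent}, \reflem{FaceTet}, \refcor{EdgeFace}; face-connectedness via the link-of-an-edge cycle, which exists because $|\veer(\calL)|$ is a manifold by \reflem{Manifold}). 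That said, the padding is heavier than necessary: Brown's theorem requires only a monotone union of open $n$-cells covering the space, not closure containment, and coverage can be seen directly -- each face of $\partial|\veer(H_n)|$ has exactly two adjacent tetrahedra (\reflem{FaceTet}) and each boundary edge has finitely many (\refcor{EdgeFace}), all of which appear in the enumeration and hence land in $T_m \subset \veer(H_m)$ for $m$ large enough. So every cell is eventually interior, and the interiors cover without padding. Your version is more explicit at the cost of extra bookkeeping; both are valid.
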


\begin{proof}
Choose an ordering $(t_i)_{i \in \NN}$ for the tetrahedra of $\veer(\calL)$. 
Applying \refprop{FaceConnected}, we arrange matters so that any initial subsequence of $(t_i)$ is face-connected.
Let $H_n$ be the convex hull of the cusps of the first $n$ tetrahedra.
By \refprop{ThreeBall}, the realisation of $\veer(H_n)$ is a closed three-ball (minus finitely many boundary points). 
Taking interiors, we find that $|\veer(\calL)|$ is an increasing union of open three-balls. 
The theorem now follows from a result of Brown~\cite{Brown61}.
\end{proof}

\renewcommand{\UrlFont}{\tiny\ttfamily}
\renewcommand\hrefdefaultfont{\tiny\ttfamily}

\bibliographystyle{plainurl}
\bibliography{../../bibfile.bib}
\end{document}